\newcommand{\comment}[1]{}
\newenvironment{vd}{\noindent\color{blue} VD :  }{}
\newenvironment{am}{\noindent\color{red} AM : }{}
\newtheorem{theorem}{{\bf Theorem}}[section]
\newtheorem{corollary}[theorem]{{\bf Corollary}}
\newtheorem{definition}[theorem]{{\bf Definition}}
\newtheorem{example}[theorem]{{\bf Example}}
\newtheorem{lemma}[theorem]{{\bf Lemma}}
\newtheorem{proposition}[theorem]{{\bf Proposition}}
\newtheorem{remark}[theorem]{{\bf Remark}}
\newcommand{\refthm}[1]{Theorem~\ref{#1}}
\newcommand{\refcor}[1]{Corollary~\ref{#1}}
\newcommand{\reflem}[1]{Lemma~\ref{#1}}
\newcommand{\refprop}[1]{Proposition~\ref{#1}}
\newcommand{\refrem}[1]{Remark~\ref{#1}}
\newcommand{\refsec}[1]{Section~\ref{#1}}
\newcommand{\set}[2]{\left\{#1\mathrel{\left|\vphantom{#1}\vphantom{#2}\right.}#2\right\}}
\newcommand{\oneset}[1]{\left\{\mathinner{#1}\right\}}
\newcommand{\smallset}[1]{\left\{\mathinner{#1}\right\}}
\newcommand{\abs}[1]{\left|\mathinner{#1}\right|}
\newcommand{\bracket}[1]{\left[\mathinner{#1} \right]}
\newcommand{\gen}[1]{\left< \mathinner{#1} \right>}
\newcommand{\rdeg}[1]{\mathop{\mathrm{red}\mbox{-}\mathrm{deg}}(#1)}
\newcommand{\N}{\mathbb{N}}
\newcommand{\Z}{\mathbb{Z}}
\newcommand{\R}{\mathbb{R}}
\newcommand{\Zt}{\Z[t]}
\renewcommand{\phi}{\varphi}
\newcommand{\eps}{\varepsilon}
\newcommand{\alp}{\alpha}
\newcommand{\bet}{\beta}
\newcommand{\gam}{\gamma}
\newcommand{\del}{\delta}
\renewcommand\SS{\Sigma}
\newcommand\GG{\Gamma}
\newcommand\G{\mathcal{G}}
\newcommand{\cA}{\mathcal{A}}
\newcommand\MM{\mathcal{M}}
\newcommand{\BS}{\mathrm{\bf{BS}}}
\newcommand\ra{\longrightarrow}
\newcommand\RA[1]{\mathrel{\underset{#1}{\Longrightarrow}}}
\newcommand\LA[1]{{\underset{#1}{\Longleftarrow}}}
\newcommand\DA[1]{{\underset{#1}{\Longleftrightarrow}}}
\newcommand\OUT[4]{#1{\underset{#2}{\Longleftarrow}}#3{\underset{#2}{\Longrightarrow}}#4}
\newcommand\RAS[2]{\overset{#1}{\underset{#2}{\Longrightarrow}}}
\newcommand\LAS[2]{\overset{#1}{\underset{#2}{\Longleftarrow}}}
\newcommand\DAS[2]{\overset{#1}{\underset{#2}{\Longleftrightarrow}}}
\newcommand\OUTS[5]{#1\overset{#2}{\underset{#3}{\Longleftarrow}}#4\overset{#2}{\underset{#3}{\Longrightarrow}}#5}
\newcommand\INS[5]{#1\overset{#2}{\underset{#3}{\Longrightarrow}}#4\overset{#2}{\underset{#3}{\Longleftarrow}} #5}
\newcommand{\BG}{\text{\tiny Big}}
\newcommand{\smalloverline}[1]
{{\mspace{1mu}\overline{\mspace{-1mu}#1\mspace{-1mu}}\mspace{1mu}}}
\newcommand{\ov}[1]{\smalloverline{#1}}
\newcommand\ovv[2]{\overline{#1}\,\overline{#2}}
\newcommand\Ext{\mathrm{E}}
\newcommand\IRR{\mathop\mathrm{IRR}}
\newcommand\cdr{\mathop\mathrm{CDR}}
\newcommand{\Gred}{$G$-re\-du\-ced\xspace}
\newcommand{\IFF}{if and only if\xspace}
\newcommand{\homo}{homomorphism\xspace}
\newcommand{\nonarch}{non-Archi\-me\-dean\xspace}
\newcommand{\Breduced}{Britton-re\-du\-ced\xspace}
\newcommand{\WP}{Word Problem\xspace}
\newcommand{\gwp}{Generalized Word Problem\xspace}
\newcommand{\sse}{\subseteq}
\newcommand{\es}{\emptyset}
\newcommand{\sm}{\setminus}
\newcommand\ei[1]{{\emph{#1}\xspace}\index{#1}} 
\begin{document}

\title{Group extensions over infinite words\thanks{Part of the work has been started in 2007 when the authors where
at the CRM (Centro Recherche Matem{\`a}tica, Barcelona). 
It was finished when the first author stayed at Stevens Institute of Technology
in September 2010. The support of both institutions is greatly acknowledged}}

\author{Volker Diekert \and Alexei Myasnikov}


\date{\today}
\maketitle

\begin{abstract}
 We construct an extension $\Ext(A,G)$ of a given group $G$ by infinite \nonarch words over an discretely ordered abelian group like $\Z^n$. This yields an effective and uniform method to study various  groups that "behave like $G$". We show that the \WP for finitely generated subgroups in the extension is decidable \IFF  the Cyclic Membership Problem in $G$ is decidable.  The present paper embeds  the partial
monoid of infinite words as defined by Myasnikov, Remeslennikov, and Serbin  in  \cite{MRS05} into  $\Ext(A,G)$. Moreover,  we define the extension 
group $\Ext(A,G)$ for arbitrary groups $G$ and not only for free groups
as done in previous work. We show some structural results about 
the group (existence and type of torsion elements, generation by elements of order 2)
and we show that some interesting HNN extensions of $G$ embed naturally in the
larger group $\Ext(A,G)$.
\end{abstract}
\section{Introduction}\label{sec:intro}

In this paper we construct an extension  of a given group $G$ by infinite \nonarch words. 
The construction is effective and gives a new uniform method to study various  groups that "behave like $G$": limits of $G$ in the Gromov-Hausdorff topology, fully residually $G$ groups, groups obtained from $G$ by free constructions, etc. 
Infinite \nonarch words appeared first in \cite{MRS05} in connection with group actions on trees. The fundamentals for   group actions on simplicial trees (now known as Bass-Serre theory) were laid down by  Serre in his  
seminal book \cite{serre80}. 

 General $\Lambda$-trees for ordered abelian groups  
 $\Lambda$ were introduced by Morgan and Shalen in \cite{MoSh84} and their theory was further
developed by Alperin and Bass in  \cite{AB87}. 
 The Archimedean case concerns with group actions on $\R$-trees.

 A complete description of finitely generated groups acting freely on $\mathbb{R}$-trees was obtain in a series of papers \cite{BF,GaborioLP94}. It is known now as   Rips’ Theorem, see \cite{Chis01} for a detailed discussion. 
  
  For \nonarch actions much less is known. Much of the recent 
  progress is due to  Chiswell and M{\"u}ller \cite{ChMu},
   Kharlampovich, Myasnikov, Remeslennikov, and Serbin \cite{KMRS04,KMRS07,MRS05}
   and the recent thesis of Nikolaev \cite{nikolaev10thesis}. 
   In these papers groups acting on freely on $\Z^n$-trees are represented as 
   words where the length takes values in  the ring of integer polynomials 
   $\Zt$. More precisely, in \cite{MRS05} the authors represent 
   elements of Lyndon’s free $\Zt$-group $F^{\Zt}$ (the free group 
   with basis $\SS$ and exponentiation 
   in $\Zt$) 
   by infinite words, which are  defined as mappings  
   $w : [1,\alp] \to \SS^{\pm 1}$ over closed intervals $[1,\alp] = 
   \set{\bet \in \Zt}{1 \leq \bet \leq \alp
   }$. Here, the ring $\Zt$ is  viewed as an ordered abelian group
   in the standard way: $0<\alp$ if the leading coefficient of the 
   polynomial $\alp$ is positive. This yields  a regular free Lyndon length function with values in $\Zt$. 
   
   The importance of Lyndon’s group $F^{\Zt}$ became also prominent due to 
   its relation to algebraic geometry over groups and the the solution of 
   the Tarski Problems \cite{KMI98,KMII98,KMIII05,KMIV06}. 
   It was known by \cite{BMR95} and the results above
   that finitely generated fully residually free groups are embeddable into $F^{\Zt}$. The converse (every finitely generated subgroup of $F^{\Zt}$ is fully residually free) was shown in the original paper by Lyndon \cite{Lynpara60}.
      It follows that every finitely generated fully residually free group has a free length function with values in a free abelian group of finite rank with the lexicographic order.  It turned out that the representation of group elements as infinite words 
      over $\Zt$ is quite  intuitive and it enables a \emph{combinatorics on words} similar to finite words. This technique leads to the solution of various algorithmic problems for $F^{\Zt}$ using the standard Nielsen cancellation argument for the length function.
      
      This concept  is the starting point for our paper:  We  use finite words over $\SS^{\pm 1}$ to represent elements of $G$. Then, exactly as in the earlier papers mentioned above, an  infinite word is  
      a mapping $w : [1,\alp] \to \SS^{\pm 1}$ over a closed interval $[1,\alp] = 
   \set{\bet \in \Zt}{1 \leq \bet \leq \alp
   }$. The monoid of infinite words is endowed with a natural involution. We can read $w: [1,\alp] \to \SS^{\pm 1}$ {}from right-to-left and simultaneously we inverse each letter. This defines $\ov w$. Clearly,
   $\ov{\ov w} = w$ and $\ov{uv}= \ov v \, \ov u$. 
   The naive idea is to use  now $w \ov w = 1$ as defining relations in order to obtain a group. This idea falls short drastically, because  
   the group collapses. The image of the 
   $F(\SS)$ in this group is $\Z/2\Z$ (for $\SS \neq \es$). Therefore the set of 
   infinite words was viewed as a partial monoid, only. It was shown that 
    $F^{\Zt}$ 
   embeds into this partial monoid, but the proof is complicated and demands technical tools.   
   
   The first major deviation in our approach (from what has been done so far) is that we still work with equations
   $w \ov w = 1$, but we restrict them to freely reduced words $w$. 
   Just as in the finite case: A word $w$ is called freely reduced, if no factor $aa^{-1}$ (where $a$ is a letter) appears. This means, there is no $1 < \bet <\alp$ such that
   $w(\bet)= w(\bet+1)^{-1}$.   The submonoid generated by freely reduced words
   (inside the monoid of all infinite words) modulo defining equations 
    $w \ov w = 1$ defines a group (which is trivial) where $F(\SS)$ embeds (which
    is non-trivial). Actually,  $F^{\Zt}$ embeds. 
    It turns out that many freely reduced words satisfy $\ov w = w$. Thus, 
    the involution has fixed points, and 
    many elements have 2-torsion in our group. Actually, in natural situations the
    group is generated by these elements of order 2. 
    
Our focus is  more ambitious and goes beyond extending free groups $F(\SS)$. We begin with 
      an arbitrary group $G$ generated by  $\SS$. This gives rise to the notion 
      of a \Gred word. An $\Zt$-word is \Gred, if no finite factor
      $w[\bet, \bet+m]$ with $m \in \N$ represents the unit element 1 in $G$. 
      We let $R^*(A,G)$ denote the submonoid generated by \Gred words
   (inside the monoid of all infinite words)  where $A=\Zt$. Clearly, we may assume that $R^*(A,G)$
   contains all finite words (because we may assume that all letters are \Gred). 
   Then we factor out  defining equations for $G$ (which are words in $\SS^{\pm 1}$)
   and defining equations
    $u \ov u = 1$ with $u \in R^*(A,G)$. In this way we obtain a group denoted here by
    $\Ext(A,G)$.
    
    The first main result of the paper states that $G$ embeds into $\Ext(A,G)$, see \refcor{cor:embedG}. The result is obtained by the proof that
    some (non-terminating) rewriting  system is strongly confluent, thus confluent. 
    This is technically involved and covers all of \refsec{sec:csoverdoag}.
    
    The second main result concerns the question when the Word Problem is decidable 
    in all finitely generated subgroups of $\Ext(A,G)$. An obvious precondition is that the base group $G$ 
    itself must share this property. However, this is not enough and 
    makes the situation somehow non-trivial. 
    We show in \refcor{cor:main} that the Word Problem is decidable 
    in all finitely generated subgroups of $\Ext(A,G)$ \IFF 
    the Cyclic Membership Problem "$u \in \gen{v}$?" is decidable for all 
    $v \in G$. There are known examples where $G$ has a soluble Word Problem, but
    Cyclic Membership Problem is not decidable for some specific $v$, see \cite{olsap00,olsap01}. 
    On the other hand,  the Cyclic Membership Problem  is uniformly decidable in many natural classes (which encompasses classes of groups with decidable  
    Membership Problem w.r.t.~subgroups) like hyperbolic groups, 
    one-relator groups or effective HNN-extensions, see \refrem{cyclicmemb}.  
    
    In the final section we show that the partial monoid $\cdr(A,\SS)$  of infinite words with a cyclically reduced decompositions (c.f. \cite{MRS05}) embeds in our group $\Ext(A,G)$, 
    and we show that some interesting HNN extensions can be embedded into $\Ext(A,G)$
    as well which are not realizable inside the partial monoid $\cdr(A,\SS)$, \refprop{HNN}.  
    In order to achieve this result we show that every  cyclically \Gred word in $\Ext(A,G)$
    sits inside a free abelian subgroup of infinite rank, \refprop{abel}. 
    
    The proof techniques in this paper are of combinatorial flavor and 
    rely on the theory of rewriting systems.  
    No particular knowledge on \nonarch words or groups acting on $\Z^n$-trees 
    is required.

\section{Preliminaries on rewriting techniques}\label{sec:pre}
Rewriting techniques are a convenient tool to prove that certain constructions
have the expected properties. Typically we extend a given group by new generators
and defining equations and we want that the original group embeds in the 
resulting quotient structure.
For example,  HNN extensions and amalgamated products or Stalling's embedding 
(see \cite{Stallings71}) 
of a pregroup in its universal group can be viewed from this viewpoint, 
 \cite{ddm10}. Here we use them in the very same spirit. 
First, we recall the basic concepts.

 A \emph{rewriting relation} over a set $X$ is binary a relation
 $\RA{}\subseteq X \times X$. By $\RAS{+}{}$ ($\RAS*{}$ resp.) we mean the transitive (reflexive and transitive resp.) closure of  $\RA{}$.
By $\DA{}$ ($\DAS*{}$ resp.) we mean the symmetric (symmetric,
reflexive, and transitive resp.) closure of  $\RA{}$. We also write
$y \LA{}x$ whenever $x \RA{} y$, and we write $x \RAS{\leq k} {} y$
whenever we can reach $y$ in at most $k$ steps from $x$.
\begin{definition}
 The  relation
 $\RA{}\subseteq X \times X$ is called:
\begin{enumerate}[i.)]
\item \emph{strongly confluent}, if $\OUT y{}xz$ implies 
$\INS y{\leq 1}{}wz$ for some $w$,

\item \emph{confluent}, if $\OUTS y*{}xz$ implies 
$\INS y{*}{}wz$ for some $w$,

\item \emph{Church-Rosser}, if $y\DAS * {}z$ implies 
$\INS y{*}{}wz$ for some $w$,

\item \emph{locally confluent}, if $\OUT y{}xz$ implies 
$\INS y{*}{}wz$ for some $w$,

\item \emph{terminating}, if every infinite chain 
  \begin{align*}
    x_0 \RAS *{} x_1 \RAS *{} \cdots x_{i-1} \RAS *{} x_i  \RAS *{} \cdots 
  \end{align*}
becomes stationary,
\item \emph{convergent} (or \emph{complete}), if it is locally confluent
and terminating.
\end{enumerate}
\end{definition}

The following facts are well-known, proofs are easy and 
can be found in any text
book
on rewriting systems, see e.g. \cite{bo93springer,jan88eatcs}.

\begin{proposition}\label{frida}
The following assertions hold: 
\begin{enumerate}[1.]
\item Strong confluence  implies confluence.
\item Confluence  is equivalent with Church-Rosser.
\item Confluence implies local confluence, but the converse is false,
  in general.
\item Convergence (i.e., local confluence and termination together) 
implies confluence.
\end{enumerate}
\end{proposition}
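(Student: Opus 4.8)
The plan is to settle the four assertions in turn, using the lighter ones as stepping stones. Assertion~3 is immediate in the forward direction, since a single-step peak $y \LA{} x \RA{} z$ is a special instance of $y \LAS*{} x \RAS*{} z$ and confluence supplies exactly the common reduct local confluence asks for; for the failure of the converse I would exhibit the classical four-element system on $\{a,b,c,d\}$ with rules $b \RA{} a$, $b \RA{} c$, $c \RA{} b$, $c \RA{} d$, in which every single-step peak closes but $b$ rewrites to the two distinct normal forms $a$ and $d$. For assertion~2 the direction Church--Rosser $\Rightarrow$ confluence is trivial, because $y \LAS*{} x \RAS*{} z$ is a special case of $y \DAS*{} z$. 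The converse I would prove by induction on the length $k$ of a zig-zag $y = u_0, u_1, \ldots, u_k = z$ realising $y \DAS*{} z$: the induction hypothesis gives a common reduct $v$ of $y$ and $u_{k-1}$, and I then split on the orientation of the last edge $u_{k-1} \DA{} u_k$. If it is $u_k \RA{} u_{k-1}$ then $u_k \RAS*{} v$ directly; if it is $u_{k-1} \RA{} u_k$ then confluence applied to the peak $v \LAS*{} u_{k-1} \RA{} u_k$ produces a common reduct of $v$ and $u_k$, hence of $y$ and $z$.

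Assertion~1 is the first with real content, and I would prove it by a tiling (double induction) argument in which the $\leq 1$ bound in the definition of strong confluence is essential. First I would establish the one-sided lemma that $y \LA{} x \RAS*{} z$ implies $y \RAS{\leq 1}{} w \LAS*{} z$ for some $w$, by induction on the length of $x \RAS*{} z$, applying strong confluence at the first step and the hypothesis to the remainder; the point is that strong confluence returns a valley with at most one step on the $y$-side, so this single-step bound survives the induction. Feeding this lemma into a second induction on the length of $y \LAS*{} x$ then yields full confluence. Pictorially this tiles the rectangle spanned by the two given reductions: each elementary cell is a single-step peak, strong confluence fills it by a cell whose two fresh sides again have length at most one, and therefore the grid closes after finitely many cells.

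Assertion~4 is Newman's Lemma. Since $\RA{}$ is terminating, $\RAS+{}$ is well-founded, and I would run Noetherian induction on $x$. Given $y \LAS*{} x \RAS*{} z$, I may assume both sides have positive length and write $x \RA{} y_1 \RAS*{} y$ and $x \RA{} z_1 \RAS*{} z$. Local confluence applied to $y_1 \LA{} x \RA{} z_1$ yields a common reduct $v$. Because $x \RAS+{} y_1$, the induction hypothesis applies to $y \LAS*{} y_1 \RAS*{} v$ and gives a common reduct $w_1$ of $y$ and $v$, so that $z_1 \RAS*{} v \RAS*{} w_1$; a second appeal to the hypothesis, now at $z_1$ and applied to $w_1 \LAS*{} z_1 \RAS*{} z$, produces the desired common reduct $w$ of $y$ and $z$.

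The only delicate steps are the two inductions. In assertion~1 the hazard is that tiling with arbitrary valleys need not terminate --- this is exactly why local confluence does not imply confluence (assertion~3) --- so the argument must carry the $\leq 1$ bound faithfully through both inductions, since losing it once breaks the termination of the tiling. In assertion~4 the corresponding subtlety is the use of well-foundedness of $\RAS+{}$, which is the one place where termination is indispensable and which separates Newman's Lemma from the false implication ``local confluence $\Rightarrow$ confluence''.
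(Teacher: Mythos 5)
Your treatments of assertions 2, 3 and 4 are correct, and they are precisely the standard textbook arguments that the paper itself points to (the paper offers no proof of Proposition~\ref{frida}, deferring to \cite{bo93springer,jan88eatcs}): the zig-zag induction for Church--Rosser, Kleene's four-element counterexample showing local confluence does not imply confluence, and Huet's Noetherian-induction proof of Newman's Lemma, with the two successive appeals to the induction hypothesis at $y_1$ and $z_1$, all check out.

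The genuine gap is in assertion 1: your one-sided lemma is transposed, and as stated it is false. You claim $y \LA{} x \RAS{*}{} z$ yields $y \RAS{\leq 1}{} w \LAS{*}{} z$, with the single-step bound surviving on the $y$-side. Counterexample: take objects $\{x,y,z_1,z,v,w\}$ with rules $x \ra y$, $x \ra z_1$, $y \ra v$, $z_1 \ra v$, $z_1 \ra z$, $v \ra w$, $z \ra w$. Every one-step peak closes with at most one step on each side (the peak at $x$ closes at $v$, the peak at $z_1$ closes at $w$), so the system is strongly confluent in the paper's symmetric sense; yet for $y \LA{} x \RA{} z_1 \RA{} z$ the only points reachable from $y$ in at most one step are $y$ and $v$, while $z$ reaches only $z$ and $w$, so no $w$ as in your lemma exists. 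The failure is already visible in your inductive step: strong confluence closes the first cell as $y \RAS{\leq 1}{} v \LAS{\leq 1}{} z_1$, and the induction hypothesis applied to $v \LAS{\leq 1}{} z_1 \RAS{*}{} z$ then prepends a further step on the $v$-side, so it is the $y$-side that accumulates steps while the far side keeps the bound --- the opposite of what you assert. The correct lemma is the transpose: $y \LAS{\leq 1}{} x \RAS{*}{} z$ implies $y \RAS{*}{} w \LAS{\leq 1}{} z$ for some $w$; equivalently, $\RAS{\leq 1}{}$ has the diamond property, and one tiles one row of the rectangle at a time. With this correction your second induction, on the length of $x \RAS{*}{} y$, goes through verbatim --- indeed there the $\leq 1$ bound is no longer needed at all, since one-step-versus-many-step closure (semi-confluence) already implies confluence by that induction; the bound is indispensable only inside the first induction, where it keeps each newly created peak of the one-versus-many shape. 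So your architecture is the right one and the repair is local, but the invariant you carry through the first induction is attached to the wrong side and, as stated, your proof of assertion 1 does not stand.
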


Often one is  interested in the case, only where  $X$ is a free group or a  free monoid and the 
rewriting relation is specified by directing defining equations. Here we are
more general in the following sense. 
Let $M$  be any monoid. 
A \emph{rewriting system
} over $M$ is a relation
 $S \subseteq M \times M$. Elements $(\ell,r)\in S$ are also called \ei{rules}. The system $S$ defines the rewriting relation
 $\RA S \subseteq M \times M$ by 
\begin{align*}
 x   \RA S y, \;\mbox{if } \; x=p\ell q, \; y= prq \; \mbox{ for some rule } \; (\ell,r)
 \in S.
 \end{align*}
The relation  $\DAS * S \subseteq M \times M$ is a congruence, hence
the congruence classes form a monoid which is denoted by 
 $M/\set{\ell=r}{(\ell,r)\in S} $. Frequently
we simply write $M/S $ for this quotient monoid. Notice, that if $M$ is a free monoid with basis $X$ then $M/S $ is the monoid given by the presentation $\langle X \mid \ell=r, \mbox{ where }  (\ell,r)\in S\rangle$.

We say that $S$ is strongly confluent or confluent etc, if in fact
$\RA S$ has the corresponding property. Instead of $(\ell,r) \in S$ we
also write $\ell {\underset{}{\longrightarrow}} r \in S$ and $\ell
{\underset{}{\longleftrightarrow}} r \in S$ in order to indicate that
both $(\ell,r) \in S$ and $(r,\ell) \in S$.
By $\IRR(S)$ we mean the set of \ei{irreducible normal forms}.
This is the subset of $M$ where no rule of $S$ can be applied, i.e.,
$$\IRR(S) = M \sm \bigcup_{(\ell, r ) \in S} M\ell M.$$

If $S$ is terminating, then we have $1 \in \IRR(S)$, and if 
$S$ is convergent, then the canonical \homo $M \to M/S$ induces
a bijection between $\IRR(S)$ and the quotient monoid  $M/S$.

If a quotient monoid is given by a finite convergent string rewriting
system $S \subseteq \GG^* \times \GG^*$, then the monoid has a
decidable \WP, which yields a major
interest in  these systems.

\begin{example}\label{freegroup}
Let $\SS$ be a set and $\SS^{-1}$ be disjoint copy. 
Then the set of rules $\set{aa ^{-1}\ra 1, a^{-1}a\ra 1}{a \in \SS}$
defines a strongly confluent and terminating system over $(\SS \cup\SS^{-1})^*$
which defines the \ei{free group} $F(\SS)$ with basis $\SS$.
\end{example}

In this paper however,
we will deal mainly with non-terminating systems which are moreover 
in many cases infinite. So convergence plays a minor role here. 
There is another class of string rewriting systems which for finite systems leads
to a polynomial space (and hence exponential time in the worst case)  decision algorithm for the \WP.

\begin{definition}\label{def:preperfect}
 A  string rewriting
system $S \subseteq \GG^* \times \GG^*$  is called
\emph{pre-perfect}, if the following three conditions hold:
\begin{enumerate}[1.]
\item The system $S$ is confluent.
\item If we have $\ell \longrightarrow r \in S$, then we have 
$\abs{\ell} \geq \abs{r}$ where $\abs{x}$ denotes the length of a word
$x$.
\item If we have $\ell \longrightarrow r \in S$ with $\abs{\ell} =
  \abs{r}$,
then we have $r \longrightarrow \ell \in S$, too.
\end{enumerate}
\end{definition}

Clearly, a convergent length-reducing system is pre-perfect,
and if a confluent system satisfies $\abs{\ell} \geq \abs{r}$ 
for all $\ell \longrightarrow r \in S$, then we can add symmetric 
rules in order to make it  pre-perfect.

\section{Non-{A}rchimedean words}
We consider group extensions over infinite words of a specific type. 
These words are also called \ei{\nonarch words}, because 
they are defined over non-ar\-chi\-me\-de\-an ordered abelian groups.

\subsection{Discretely ordered abelian groups}

A \ei{ordered abelian group} is an abelian  group $A$ 
together with a linear order $\leq$ such that $x\leq y$ \IFF 
$x+z\leq y+z$ for all $x,y,z \in A$. It is \ei{discretely ordered}, if an addition there is least 
positive element $1_A$. Here, as usual, an element $x$ is \emph{positive}, 
if $0< x$. An ordered abelian group is \ei{Archimedean}, if for all 
$0 \leq a \leq b$ there is some $n \in \N$ such that $b < na$, otherwise it is \ei{\nonarch.}    

If $B$
is any ordered abelian group, then $A=\Z \times B$ is discretely ordered
with $1_A = (1,0)$ and the lexicographical ordering:
\begin{equation*}
(a,b) \leq (c,d) \mbox{ if } b < d \mbox{ or } b=d \mbox{ and } a < c.
\end{equation*}
The group is \nonarch unless $B$ is trivial since
$(n,0) < (0,x)$ for all $n \in \N$ and positive $x \in B$.

In particular, $\Z\times \Z$ is a \nonarch discretely ordered abelian  group. 
It serves as our main example. Iterating the process 
all finitely generated free abelian $\Z^k$ are viewed as being discretely ordered; and by
a transfinite iteration we can consider 
arbitrary direct sums of $\Z$. 
This is where we limit ourselves. 
In this paper we consider discretely ordered abelian groups only, which can be written as
\begin{equation}
 A = \oplus_{i \in \Omega} \gen{t_i},
\end{equation}
where $\Omega$ is a set of ordinals, and $\gen{t_i}$ denotes the infinite 
cyclic group $\Z$ generated by the element $t_i$.
Elements of $A$ are finite sums $\alpha = \sum_i n_i t_i$ with $n_i \in \Z$.
Since the sum is finite, either $\alpha = 0$ or there is a greatest ordinal $i \in \Omega$ (denoted by $\deg(\alpha)$) 
with $n_i \neq 0$. By convention, $\deg(0) = -\infty$. 
We call $\deg(\alpha)$ the \emph{degree} or \emph{height} of $\alpha$. An element $\alpha = \sum_i n_i t_i\in A$ is 
called \emph{positive}, if $n_d > 0$ for $d=\deg(\alpha)$. We let
$\alpha \leq \beta$, if $\alpha = \beta$ or $\beta - \alpha$ is positive. 
Moreover, for $\alpha, \beta \in A$
we define the \emph{closed interval} $[\alpha,\beta]=\set{\gamma \in A}{\alpha \leq \gamma \leq \beta}$. Its \ei{length} is defined to be $\bet - \alp+1$. 

For $\Z \times 
\Z$ the interval $[(-3,0),(2,1)]$ is  depicted as in Fig.~\ref{figci}. Its length is $(6,1)$. 

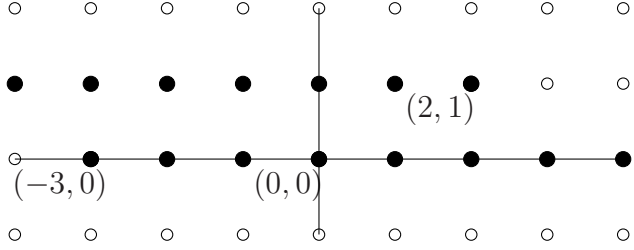
\begin{figure}[p,t,b]
  \centering
\begin{picture}(80,50)(10,-100)
\gasset{Nh=1.5,Nw=1.5,Nfill=n}
{\gasset{ExtNL=y,NLangle=-140,NLdist=0.3}
\multiput(0,-40)(0,-10){4}{
\multiput(0,0)(10,0){9}{\node(n0)(0,-20){}}}
\gasset{Nh=2,Nw=2,Nfill=y}

\node[Nfill=n](n0)(40,-80){$(0,0)$}
\node[Nfill=y](n01)(10,-80){$(-3,0)$}

\drawline[AHnb=0](40,-60.0)(40,-90){}
\drawline[AHnb=0](0,-80)(80,-80){}

\multiput(10,-80)(10,0){8}{\node[Nfill=y](n0)(0,0)}
\multiput(0,-70)(10,0){7}{\node[Nfill=y](n0)(0,0)}

\node[Nfill=y](ne)(60,-70){$(2,1)$}

}
\end{picture}
\caption{A closed interval of length $(6,1)$ in $\Z \times \Z$}\label{figci}
\end{figure}

Sometimes we simply  illustrate intervals of length $(m,1)$ as $[\cdots)(\cdots]$
and intervals of length $(m,2)$ as $[\cdots)(\;\cdots\; )(\cdots]$. This will become 
clearer later.

\subsection{Non-{A}rchimedean words over a group $G$}
An \ei{involution} of a set $M$ is a mapping $M \to M, x \mapsto \ov x$ with 
$\ov {\ov x} = x$ for all $x\in M$. A \ei{monoid with involution} is a monoid $M$ with an involution $x \mapsto \ov x$  such that $\ov {xy} = \ov y \ov x$ 
for all $x,y \in M$ and, as a consequence, $\ov 1 = 1$. Every group is a monoid with involution 
$x \mapsto  x^{-1}$.    Obviously, if $M$ is a monoid with involution $x \mapsto \ov x$ then the quotient $M/\set{x \ov x = 1}{x \in M}$ is a group. Furthermore, if $G$ is a group and $M$ is  a monoid with involution then every monoid homomorphism respecting involutions $\phi: M \to G$  factors through this canonical quotient.  Let $a \mapsto \ov a$ denote a bijection between 
sets 
$\SS$ and $\ov{\SS}$, hence  $\ov{\SS} =  \{\ov{a} \mid a \in \SS\}$. The map $a \mapsto \ov{a}, \ov{a} \mapsto a$ is an involution on $\SS \cup \ov{\SS}$ with  $\ov{\ov{ a}} = a$.  It extends to an  involution $x \mapsto \ov{x}$ on  the free monoid $(\SS \cup \ov{\SS})^*$ with basis $\SS \cup \ov{\SS}$ by $\overline{a_1\cdots a_n}=
\overline{a_n} \cdots \overline{a_1}$. 
In case that  $\SS \cap \ov{\SS} = \es$ 
the resulting structure $((\SS \cup \ov{\SS})^*, \cdot, 1 , \ov{\phantom{a}})$ is the {\em free monoid with involution} with basis $\SS$.

Throughout $G$ denotes a group with  a generating set 
$\Sigma$. We always assume that $a \neq 1$ for all $a \in \Sigma$.  We let $ \GG = \Sigma \cup \overline{\Sigma}$, where
$\overline{\Sigma} = \Sigma^{-1} \sse G$ and  $\overline{a} = a^{-1}$ for $a \in \GG$. 
The inclusion $\GG \sse G$ induces  the
 canonical homomorphism (presentation) onto the group $G$:
\begin{equation*}
 \pi: \GG^* \to G.
\end{equation*}

Clearly, for every word $w \in \GG^*$
we have $\pi(\overline{w}) = \pi(w)^{-1}$. Note that there are fixed points for the involution on $\GG$ in case $\SS$ contains an element of order 2.

Let $A = \oplus_{i \in \Omega} \gen{t_i}$ be a  discretely ordered abelian group as above.
A \emph{partial $A$-map} is a map $p: D \rightarrow \GG$ with
$D \subseteq A$. Two partial maps $p: D \rightarrow \GG$ and $p': D' \rightarrow \GG$  are termed equivalent if $p'$ is an {\em $\alpha$-shift} of $p$ for some $\alpha \in A$, i.e., $D'=\set{\alpha+\beta}{\beta \in D}$ and $p'(\alpha+\beta) = p(\beta)$ 
for all $\beta \in D$.  
This an equivalence relation on partial $A$-maps, and an equivalence class 
of partial $A$-maps is called a \emph{partial $A$-word}. If $D = [\alpha,\beta]=\set{\gamma \in A}{\alpha \leq \gamma \leq \beta}$ then the equivalence class of 
$p:[\alpha,\beta]\rightarrow \GG$ is called a \emph{closed $A$-word}.  By abuse of language a closed (resp. partial) $A$-word is sometimes simply called a \ei{word} (resp. \ei{partial word}). 

A word $p:[\alpha,\beta]\rightarrow \GG$ is  \ei{finite} if the set $[\alpha,\beta]$ is finite, otherwise
it is \ei{infinite}. Usually, we identify finite words  with the corresponding 
elements in $\GG^*$. 

If $p:[\alpha,\beta]\rightarrow \GG$ and 
$q:[\gamma,\delta]\rightarrow \GG$ are closed $A$-words, then we define their concatenation as follows. We may 
assume that $\gamma = \beta+1$ and we let:
\[
\begin{array}{rcll}
p \cdot q: [\alpha,\delta]& \to & \GG\\
x& \mapsto& p(x)& \mbox{ if } x \leq \beta\\
x &\mapsto& q(x)& \mbox{ otherwise}.
\end{array}
\]
It is clear that this operation is associative. Hence, the set of closed $A$-words forms a monoid, which we denote by $W(A,\GG)$. The 
neutral element, denoted by $1$, is the totally undefined mapping.
 The standard 
representation of an $A$-word $p$ is a mapping $p: [1, \alpha] \to \GG$, where $0 \leq \alpha$. In this case 
$\alpha$ is called the \emph{length} of $p$; sometimes we also write $|p|=\alpha$. The \emph{height} or \emph{degree} 
of $p$ is the degree of $\alpha$; we also write $\deg(p) = \deg(\alpha)$. 
For a partial word $p: D \rightarrow \GG$ and $ [\alpha,\beta]\sse D$ we denote by $p[\alpha,\beta]$ the restriction of $p$ to the interval $[\alpha,\beta]$.
Hence $p[\alpha,\beta]$ is a closed word. Sometimes we write $p[\alpha]$
instead of $p[\alpha,\alp]$. Thus,  $p[\alpha]= p(\alp)$.

The monoid $W(A,\GG)$  is a monoid with involution  $p \mapsto \ov{p}$ where for $p:[1,\alpha] \rightarrow 
\GG$ we define $\overline{p} \in W(A,\GG)$ by $\overline{p}: [-\alpha,-1] \rightarrow \GG$, $-\beta \mapsto \ov{p(\beta)}$.\\

Recall that $A = \oplus_{i \in \Omega} \gen{t_i}$. We may assume that $0$ is the least ordinal in $\Omega$, in which case $\Z$ can be  viewed as a subgroup of $A$ via the 
embedding $n \mapsto nt_0$. Thus $1\in \N$ is also the smallest positive element in $A$. If, for example,  $A = \Z \times \Z$, 
then we have identified $1\in \N$ with the pair $(1,0)$. 

If $x \in W(A,\GG)$ and $x = pfq$ for some $p, q \in W(A,\GG)$ then $p$  is called a \emph{prefix}, $q$  is called a \emph{suf\-fix}, and  $f$ is called a \emph{factor} 
of $x$. If $1 \neq f \neq x$ then $f$ is  called a \emph{proper factor}.
As usual, a factor  is finite, if $|f| \in \N$. Thus, a finite factor can be written as $x[\alpha,\beta]$ 
where $\beta=\alpha+n$, $n \in \N$. 

A closed word $x: [1,\alpha] \rightarrow \GG$ is called
\emph{freely reduced} if $x(\beta) \neq \overline{x(\beta+1)}$ for all
$1 \leq \beta < \alpha$. It is called \emph{cyclically reduced} if 
$x^2$ is freely reduced. 

As a matter of fact we need a stronger conditions. The word $x$ is called \emph{\Gred}, if no finite
factor $x[\alpha,\alpha+n]$ with $n\in \N$, $n \geq 1$, becomes the
identity $1$ in the group $G$. Note that all \Gred words are freely reduced
by definition. We say $x$ is \emph{cyclically \Gred}, 
if every finite power $x^k$ with $k \in \N$ is \Gred. 
 Over a free group $G$ with basis $\Sigma$  a word is freely reduced if and only if it is \Gred, and it is cyclically \Gred \IFF it is cyclically reduced.

 In Fig.~\ref{figone} we see a closed word which is not freely reduced.
 Fig.~\ref{figw} defines a word $w$ with a  sloppy notation 
 $[aaa\cdots)(\cdots abab\cdots)(\cdots bbb]$. 
 Fig.~\ref{figwa} shows that for the same word $w$ we have $aw \neq wb$
 (because $aw[(0,1)] = a$ and $wb[(0,1)] = b$),
 but we have $aaw = wbb$ in the monoid $W(A,\GG)$, see Fig.~\ref{figwaa}. Recall, that two elements $x, y$ in a monoid $M$ are called \emph{conjugated}, if $xw = wy$ for some $w \in M$. 
  Fig.~\ref{figxw} shows that all finite words $x,y \in \GG^*$ are conjugated in
  $W(A,\GG)$ provided they have the same length $\abs{x} = \abs {y}$ and $A$ is \nonarch. Indeed 
  $t = [uuu\cdots)(\cdots
  vvv]$ does the job  $ut =  tv$. Clearly, $ut =  tv$ implies $\abs{x} = \abs {y}$. In particular, this shows that the monoid $W(A,\GG)$ is not free.  Indeed, if $x$ and $y$ are conjugated elements in  a free monoid, say 
 $xw = wy$,   
 then $x = rs $, $y= sr$, and $w = (rs)^mr $ for some $r,s \in \GG^*$ and $m\in \N$, which is not the case for the example above.


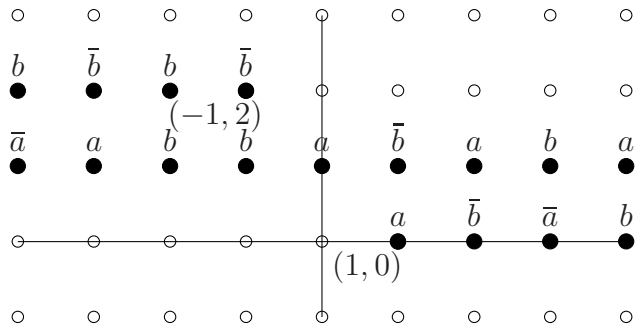
\begin{figure}[t,b,p]
  \centering
\begin{picture}(80,60)(10,-100)
\gasset{Nh=1.5,Nw=1.5,Nfill=n}
{\gasset{ExtNL=y,NLangle=-140,NLdist=0.3}
\multiput(0,-30)(0,-10){5}{
\multiput(0,0)(10,0){9}{\node(n0)(0,-20){}}}
\gasset{Nh=2,Nw=2,Nfill=n}

\node[Nfill=y](n0)(50,-80){$(1,0)$}
\drawline[AHnb=0](40,-50.0)(40,-90){}
\drawline[AHnb=0](0,-80)(80,-80){}

\multiput(50,-80)(10,0){4}{\node[Nfill=y](n0)(0,0)}
\multiput(0,-70)(10,0){9}{\node[Nfill=y](n0)(0,0)}
\multiput(0,-60)(10,0){4}{\node[Nfill=y](n0)(0,0)}
\node[Nfill=y](ne)(30,-60){$(-1,2)$}

\gasset{Nfill=n,Nframe=n,ExtNL=y,NLangle=90,NLdist=1}
\node(n1)(50,-80){$a$}
\node(n1)(60,-80){$\ov b$}
\node(n1)(70,-80){$\ov a$}
\node(n1)(80,-80){$b$}
\node(n1)(00,-70){$\ov a$}
\node(n1)(10,-70){$a$}
\node(n1)(20,-70){$b$}
\node(n1)(30,-70){$b$}
\node(n1)(40,-70){$a$}
\node(n1)(50,-70){$\ov b$}
\node(n1)(60,-70){$a$}
\node(n1)(70,-70){$b$}
\node(n1)(80,-70){$a$}
\node(n1)(00,-60){$b$}
\node(n1)(10,-60){$\ov b$}
\node(n1)(20,-60){$b$}
\node(n1)(30,-60){$\ov b$}
}
\end{picture}
\caption{A closed non-freely reduced word of length $(-1,2)$.}
  \label{figone}
\end{figure}


\begin{figure}[t,b,p]
  \centering
\begin{picture}(80,60)(10,-100)
\gasset{Nh=1.5,Nw=1.5,Nfill=n}
{\gasset{ExtNL=y,NLangle=-140,NLdist=0.3}
\multiput(0,-30)(0,-10){5}{
\multiput(0,0)(10,0){9}{\node(n0)(0,-20){}}}
\gasset{Nh=2,Nw=2,Nfill=y}

\node[Nfill=y](n0)(50,-80){$(1,0)$}
\drawline[AHnb=0](40,-50.0)(40,-90){}
\drawline[AHnb=0](0,-80)(80,-80){}

\multiput(50,-80)(10,0){4}{\node[Nfill=y](n0)(0,0)}
\multiput(0,-70)(10,0){9}{\node[Nfill=y](n0)(0,0)}
\multiput(0,-60)(10,0){5}{\node[Nfill=y](n0)(0,0)}
\node[Nfill=y](ne)(40,-60){$(0,2)$}

\gasset{Nfill=n,Nframe=n,ExtNL=y,NLangle=90,NLdist=1}
\node(n1)(50,-80){$a$}
\node(n1)(60,-80){$a$}
\node(n1)(70,-80){$a$}
\node(n1)(80,-80){$a$}
\node(n1)(00,-70){$b$}
\node(n1)(10,-70){$a$}
\node(n1)(20,-70){$b$}
\node(n1)(30,-70){$a$}
\node(n1)(40,-70){$b$}
\node(n1)(50,-70){$a$}
\node(n1)(60,-70){$b$}
\node(n1)(70,-70){$a$}
\node(n1)(80,-70){$b$}
\node(n1)(00,-60){$b$}
\node(n1)(10,-60){$b$}
\node(n1)(20,-60){$b$}
\node(n1)(30,-60){$b$}
\node(n1)(40,-60){$b$}

}
\end{picture}
\caption{A word $w$ representing $[aaa\cdots)(\cdots abab\cdots)(\cdots bbb]$}
\label{figw}
\end{figure}


\begin{figure}[t,b,p]
  \centering
\begin{picture}(80,60)(10,-100)
\gasset{Nh=1.5,Nw=1.5,Nfill=n}
{\gasset{ExtNL=y,NLangle=-140,NLdist=0.3}
\multiput(0,-30)(0,-10){5}{
\multiput(0,0)(10,0){9}{\node(n0)(0,-20){}}}
\gasset{Nh=2,Nw=2,Nfill=y}

\node[Nfill=y](n0)(50,-80){$(1,0)$}
\drawline[AHnb=0](40,-50.0)(40,-90){}
\drawline[AHnb=0](0,-80)(80,-80){}

\multiput(50,-80)(10,0){4}{\node[Nfill=y](n0)(0,0)}
\multiput(0,-70)(10,0){9}{\node[Nfill=y](n0)(0,0)}
\multiput(0,-60)(10,0){5}{\node[Nfill=y](n0)(0,0)}
\node[Nfill=y](ne)(50,-60){$(1,2)$}

\gasset{Nfill=n,Nframe=n,ExtNL=y,NLangle=90,NLdist=1}
\node(n1)(50,-80){$a$}
\node(n1)(60,-80){$a$}
\node(n1)(70,-80){$a$}
\node(n1)(80,-80){$a$}
\node(n1)(00,-70){$a$}
\node(n1)(10,-70){$b$}
\node(n1)(20,-70){$a$}
\node(n1)(30,-70){$b$}
\node(n1)(40,-70){$a$}
\node(n1)(50,-70){$b$}
\node(n1)(60,-70){$a$}
\node(n1)(70,-70){$b$}
\node(n1)(80,-70){$a$}

\node(n1)(00,-60){$b$}
\node(n1)(10,-60){$b$}
\node(n1)(20,-60){$b$}
\node(n1)(30,-60){$b$}
\node(n1)(40,-60){$b$}
\node(n1)(50,-60){$b$}
}
\end{picture}
\caption{$aw = a[aaa\cdots)(\cdots abab\cdots)(\cdots
  bbb)$ and  $aw\neq wb$.}\label{figwa}
\end{figure}


\begin{figure}[t,b,p]
  \centering
\begin{picture}(80,60)(10,-100)
\gasset{Nh=1.5,Nw=1.5,Nfill=n}
{\gasset{ExtNL=y,NLangle=-140,NLdist=0.3}
\multiput(0,-30)(0,-10){5}{
\multiput(0,0)(10,0){9}{\node(n0)(0,-20){}}}
\gasset{Nh=2,Nw=2,Nfill=y}

\node[Nfill=y](n0)(50,-80){$(1,0)$}
\drawline[AHnb=0](40,-50.0)(40,-90){}
\drawline[AHnb=0](0,-80)(80,-80){}

\multiput(50,-80)(10,0){4}{\node[Nfill=y](n0)(0,0)}
\multiput(0,-70)(10,0){9}{\node[Nfill=y](n0)(0,0)}
\multiput(0,-60)(10,0){7}{\node[Nfill=y](n0)(0,0)}
\node[Nfill=y](ne)(60,-60){$(2,2)$}

\gasset{Nfill=y,Nframe=n,ExtNL=y,NLangle=90,NLdist=1}
\node(n1)(50,-80){$a$}
\node(n1)(60,-80){$a$}
\node(n1)(70,-80){$a$}
\node(n1)(80,-80){$a$}

\node(n1)(00,-70){$b$}
\node(n1)(10,-70){$a$}
\node(n1)(20,-70){$b$}
\node(n1)(30,-70){$a$}
\node(n1)(40,-70){$b$}
\node(n1)(50,-70){$a$}
\node(n1)(60,-70){$b$}
\node(n1)(70,-70){$a$}
\node(n1)(80,-70){$b$}

\node(n1)(00,-60){$b$}
\node(n1)(10,-60){$b$}
\node(n1)(20,-60){$b$}
\node(n1)(30,-60){$b$}
\node(n1)(40,-60){$b$}
\node(n1)(50,-60){$b$}
\node(n1)(60,-60){$b$}
}
\end{picture}
\caption{
$ aa [aaa\cdots)(\cdots abab\cdots)(\cdots bbb]
=  [aaa\cdots)(\cdots abab\cdots)(\cdots bbb]bb$}
  \label{figwaa}
\end{figure}



\begin{figure}[t,b,p]
  \centering
\begin{picture}(80,60)(10,-100)
\gasset{Nh=1.5,Nw=1.5,Nfill=n}
{\gasset{ExtNL=y,NLangle=-140,NLdist=0.3}
\multiput(0,-30)(0,-10){5}{
\multiput(0,0)(10,0){9}{\node(n0)(0,-20){}}}
\gasset{Nh=2,Nw=2,Nfill=y}

\node[Nfill=y](n0)(50,-80){$(1,0)$}
\drawline[AHnb=0](40,-50.0)(40,-90){}
\drawline[AHnb=0](0,-80)(80,-80){}

\multiput(50,-80)(10,0){4}{\node[Nfill=y](n0)(0,0)}
\multiput(0,-70)(10,0){4}{\node[Nfill=y](n0)(0,0)}
\node[Nfill=y](ne)(40,-70){$(0,1)$}

\gasset{Nfill=n,Nframe=n,ExtNL=y,NLangle=90,NLdist=1}
\node(n1)(50,-80){$x$}
\node(n1)(60,-80){$x$}
\node(n1)(70,-80){$x$}
\node(n1)(80,-80){$x$}

\node(n1)(00,-70){$y$}
\node(n1)(10,-70){$y$}
\node(n1)(20,-70){$y$}
\node(n1)(30,-70){$y$}
\node(n1)(40,-70){$y$}

}
\end{picture}
\caption{An infinite word  $t = [xxx\cdots)(\cdots
  yyy]$ where  $xt =  ty$.}
  \label{figxw}
\end{figure}

If $G$ is an infinite  group, then there are \Gred $A$-words of arbitrary length. 

\begin{lemma}\label{lem:totallang}
 Let $G$ be an infinite  group and $\alpha \in A$. Then there exists a \Gred $A$-word 
$x: [1,\alpha] \to \GG$ of length $\alpha$.
\end{lemma}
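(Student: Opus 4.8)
\medskip
\noindent\textbf{Proof plan.}
The plan is to reformulate \Gred ness as a purely combinatorial statement about walks in the Cayley graph of $G$, and then to produce, once and for all, a single bi-infinite self-avoiding walk whose segments can be transported onto $[1,\alpha]$. First I would record the following reformulation for a word indexed by consecutive positions (a \emph{chain}) with letters $c_1,c_2,\dots$: put $p_0=1$ and $p_k=\pi(c_1\cdots c_k)$, so that a finite factor $c_{i+1}\cdots c_j$ is trivial in $G$ \IFF $p_i=p_j$. Since every letter satisfies $c\neq 1$ in $G$, consecutive values always differ, $p_k\neq p_{k+1}$; hence requiring all factors of length $\geq 2$ to be nontrivial is exactly requiring all $p_k$ to be pairwise distinct, i.e. that $(p_k)$ is a \emph{self-avoiding walk} in the Cayley graph $\cG$ of $G$ (vertex set $G$, an edge $g\to gs$ for each $s\in\GG$).

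The crucial observation for the transfinite case is that a finite factor $x[\beta,\beta+n]$ occupies the positions $\beta,\beta+1_A,\dots,\beta+n\,1_A$, which all lie in a single coset of the subgroup $\Z\cdot 1_A$. Therefore $x:[1,\alpha]\to\GG$ is \Gred \IFF for every coset $C$ the restriction of $x$ to $[1,\alpha]\cap C$, read in increasing order, is a self-avoiding walk in $\cG$. Each set $[1,\alpha]\cap C$ is, with the induced order, isomorphic to an interval of $\Z$ (finite, a ray, or all of $\Z$), and consecutive positions map to consecutive integers. Consequently the whole construction reduces to one task: produce a \emph{bi-infinite} self-avoiding walk $\dots,q_{-1},q_0,q_1,\dots$ in $\cG$, with edge labels $b_k\in\GG$. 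Given such a walk, for each coset $C$ I fix an order-isomorphism of $[1,\alpha]\cap C$ onto an interval $I\sse\Z$ and define $x$ on $C$ by pulling back $(b_k)_{k\in I}$; along every chain one then reads off the segment $(b_k)_{k\in I}$, a sub-walk of a self-avoiding walk and hence self-avoiding. By the reformulation $x$ is \Gred, and its domain is exactly $[1,\alpha]$, so $\abs{x}=\alpha$. Note that two-sided chains genuinely occur (for instance the coset of $t_1$ lies entirely inside $[1,2t_1]$ once $\deg\alpha\geq 1$), so a double ray is really needed rather than just a ray.

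It remains to build the double ray in the Cayley graph of the infinite group $G$, and I would split into two cases. If $\SS$ is infinite this is immediate: every vertex has infinite degree, so starting at $q_0=1$ I extend greedily to the right and then to the left, at each step choosing a neighbour outside the finite set of already chosen vertices. If $\SS$ is finite then $\cG$ is connected, infinite, locally finite, and vertex-transitive; since $G$ is infinite and finitely generated the balls $B_n=\set{g\in G}{\abs{g}_\SS\leq n}$ are finite and exhaust $G$, so the word length is unbounded. For each $n$ I pick $g$ with $\abs{g}_\SS=2n$ and a geodesic $1=v_0,\dots,v_{2n}=g$, which is self-avoiding because $\abs{v_i}_\SS=i$; translating by the inverse of the midpoint $v_n$ yields a self-avoiding walk through $1$ with $n$ edges on each side.

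The hard part will be the passage to the limit for finite $\SS$. Here I apply a König/compactness argument to these centred walks: using local finiteness and the pigeonhole principle repeatedly, I fix $q_1,q_{-1},q_2,q_{-2},\dots$ so that every finite window $(q_{-m},\dots,q_m)$ coincides with one of the centred walks of index $n\geq m$. Each window is then a sub-walk of an honest self-avoiding walk, so no vertex is ever repeated, and the two-sided sequence $(q_k)_{k\in\Z}$ is the desired double ray. This compactness step, together with the verification that the chain decomposition behaves as claimed, is the only real obstacle; everything else is bookkeeping.
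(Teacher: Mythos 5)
Your proposal is correct and takes essentially the same route as the paper: both arguments reduce the lemma to producing a bi-infinite \Gred word indexed by $\Z$ (greedy extension when the generating set is infinite; a K\"onig's-lemma/compactness extraction from arbitrarily long reduced segments when it is finite) and then transport that word onto $[1,\alpha]$ along the cosets of $\Z\cdot 1_A$, which is exactly what the paper's projection $q(\sum_i n_i t_i)=p(n_0)$ does implicitly. Your Cayley-graph dictionary (\Gred chains as self-avoiding walks, with geodesic segments centred at $1$ as the seeds in place of the paper's tree of finite \Gred words) and your explicit coset-by-coset verification are a faithful, somewhat more detailed repackaging of the same proof.
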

\begin{proof} First, let us assume that
$\GG$ is finite. We may assume that letters of $\GG$ are \Gred.
  There are infinitely many finite \Gred words in $\GG^*$,
  simply because each group element can be represented this way. They
  form a tree in the following way. The root is the empty word $1$. A
  letter has $1$ as its parent node.  A finite \Gred word of the
  form $w=avb$ with $a,b \in \GG$ has $v$ as its parent node.
  Since $\GG$ is finite the degree of each node is finite. Hence
  K\"onig's Lemma tells us that  
  there must be an infinite path. Following this path from the root
  yields a partial word $p: \Z \rightarrow \GG$ in an obvious way:
  If $v$ denotes the \Gred word $v:[m,n] \to \GG$, then $w=avb$
  denotes the \Gred word $w:[m-1,n+1] \to \GG$ 
  with $w[m-1] = a$, $w[m,n] = v$, and $w[n+1] = b$.  This mapping
  $p: \Z \rightarrow \GG$ can be extended to a mapping $q: A
  \rightarrow \GG$ by $q(\sum_i n_it_i) = p(n_0)$. This means we
  project $\alpha \in A$ to the first component and then we use $p$.
  For every $\alpha \in A$ the partial word $q[1,\alpha] \rightarrow
  \GG$ is \Gred.

  If $G$ is finitely generated but $\GG$ is infinite then one can repeat the argument above for some large enough finite subset of $\GG$ (that generates $G$). It remains to consider the case that $G$ is not finitely generated. Assume 
  a \Gred word $v:[m,n] \to \GG$ has been constructed. Then we choose
  $a \in \GG$ such that $a$ is not in subgroup generated by the elements
  $v[i]$ for $m \leq i \leq n$. Clearly, $av: [m-1,n] \to \GG$ is \Gred. 
  Next choose  $b \in \GG$ such that $b$ is not in subgroup generated by the elements
  $av[i]$ for $m-1 \leq i \leq n$. 
  Now,  $avb: [m-1,n+1] \to \GG$ is \Gred. 
  We obtain a \Gred word  $p: \Z \rightarrow \GG$ and we argue as above.   
\end{proof}

By $R(A,G)$ we denote the set of all \Gred words in $W(A,\GG)$, and by $R^*(A,G)$ we mean the submonoid of 
$W(A,\GG)$ which is generated by $R(A,G)$. 

\begin{remark}\label{rem:uninteressant} In the notation above:
\begin{itemize}
\item If the group $G$ is finite, then $R(A,G)$
 cannot contain any infinite word, and in this case $R^*(A,G) = \GG^*$. 
 \item If $A = \Z$ then $W(\Z,\GG) = \GG^*$. 
 \end{itemize}
 These  situations are without any interest in our context, 
 so we assume in the sequel that $G$ is infinite and that $A$ has rank at least 2 (i.e., it is \nonarch). 
\end{remark}

Observe, that the length function $W(A,\GG)\to A, p \mapsto \abs{p}$ induces a canonical 
\homo onto $\oplus_{i \in \Omega} \Z / 2\Z$ which therefore factors through 
the greatest quotient group of $W(A,\GG)$. This group 
collapses $\SS$ into a group of order 2, and therefore the greatest quotient group of $W(A,\GG)$ is of no particular interest here. More precisely, 
we have the following fact. 

\begin{proposition}\label{collapse}
Let  $\SS \neq  \es$ and $$\psi: F(\SS)\to  W(A,\GG)/ \set {u\ov u = 1}{u \in W(A,\GG)}$$ be the canonical 
\homo induced by $\SS \sse W(A,\GG)$, and let $A$ have  rank at least 2. Then the image of  
$F(\SS)$  under $\psi$ is the group $\Z / 2\Z$.
\end{proposition}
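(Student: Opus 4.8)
The plan is to sandwich the image of $\psi$ between $\Z/2\Z$ and itself, writing $Q=W(A,\GG)/\set{u\ov u=1}{u\in W(A,\GG)}$ for the target group and recalling that $\ov u=u^{-1}$ holds in $Q$ for every $u$.

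\emph{Surjection onto $\Z/2\Z$.} As observed just before the statement, the length function $W(A,\GG)\to A$ reduced modulo $2A$ annihilates every relator (because $|u\ov u|=2|u|$) and hence induces a homomorphism $\bar\lambda$ from $Q$ onto $\oplus_{i\in\Omega}\Z/2\Z$. Each generator $a\in\SS$ has length $1$, so $\bar\lambda\psi$ sends every $a$ to the nonzero element of the summand indexed by the least ordinal. Consequently $\bar\lambda\psi$ maps $F(\SS)$ \emph{onto} that copy of $\Z/2\Z$; in particular $\psi(a)\neq 1$ for every letter, and the image of $\psi$ surjects onto $\Z/2\Z$.

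\emph{Reduction of the collapse to one fact.} It remains to show the image has order at most $2$, and for this it suffices to prove that all letters of $\GG$ become \emph{equal} in $Q$: if they all equal a common $c$, then applying the involution to $a=c$ gives $\ov a=\ov c$, while $\ov a\in\GG$ forces $\ov a=c$, so $c=\ov c=c^{-1}$ and hence $c^2=c\ov c=1$; since $c\neq 1$ by the previous paragraph, the image is $\langle c\rangle\cong\Z/2\Z$. Two relations push in this direction. First, each letter is conjugate to its inverse by an involution: for $a\in\GG$ set $w_a=[aaa\cdots)(\cdots\ov a\,\ov a]$ (constant $a$ on the lower block, constant $\ov a$ on the upper block). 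A direct check of the defining formulas gives $\overline{w_a}=w_a$, hence $w_a^2=w_a\overline{w_a}=1$, together with the concatenation identity $a\,w_a=w_a\,\ov a$, so that $w_a^{-1}a\,w_a=\ov a$. Second, by \reffig{figxw} any two letters $a,b$ are conjugate, via $a\,t=t\,b$ with $t=[aaa\cdots)(\cdots bbb]$. Thus in $Q$ all letters already lie in a single conjugacy class and each is conjugate to its own inverse.

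\emph{Main obstacle.} The genuinely hard step is to upgrade these conjugacies to equalities. The relations assembled so far only exhibit, for each letter, a dihedral picture $\langle a,w_a\mid w_a^2=1,\ w_a a w_a^{-1}=\ov a\rangle$, which by itself is perfectly consistent with $a$ having infinite order; conjugacy alone can never yield $a=\ov a$ or $a=b$. To force the collapse one must exploit the full strength of imposing $u\ov u=1$ for \emph{all} infinite words, i.e.\ a normal–closure argument in which a finite word such as $a\ov b$ is realized as a product of conjugates of relators $u_i\ov{u_i}$ whose higher-degree lengths telescope away, leaving exactly $a\ov b$ and thereby killing it in $Q$. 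This is precisely where the hypothesis $\operatorname{rank}(A)\ge 2$ is indispensable: it provides the extra coordinate needed to build the auxiliary words $w_a$, $t$ and their combinations, whereas for $A=\Z$ there are no infinite words at all ($W(\Z,\GG)=\GG^*$, see \refrem{rem:uninteressant}) and no collapse occurs. I expect the cleanest route is not to chain the conjugacies above but to produce, for each pair of letters, a single explicit (self-dual) infinite word whose relator $u\ov u=1$ directly witnesses $a=\ov b$; finding that word is the crux, and I would spend my effort searching among length-$(0,1)$ words with mixed constant blocks, using the room afforded by the second coordinate $t_1$.
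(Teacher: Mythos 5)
Your first two steps are sound and agree with the paper: nontriviality of the image via the length function modulo $2$, and the reduction of the whole statement to showing that all letters of $\GG$ become equal in the quotient $Q$ (equivalently, $\psi(ab)=1$ for all $a,b\in\GG$). But your argument stops exactly at the decisive point: you never exhibit the word or the computation that forces the collapse, only a plan to ``search among length-$(0,1)$ words with mixed constant blocks''. Everything you actually prove --- the involutions $w_a$ with $w_a a w_a^{-1}=\ov a$, and the conjugacy $at=tb$ of \reffig{figxw} --- is, as you yourself observe, consistent with the letters having infinite order, so no conclusion follows. As it stands this is a genuine gap, not a proof.

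Moreover, your diagnosis of the obstacle is off. You assert that ``conjugacy alone can never yield $a=\ov a$ or $a=b$'' and anticipate a normal-closure argument with telescoping relators; in fact the paper's proof is nothing but one more monoid conjugacy, applied to words of length $2$ rather than to single letters. Take $u=[ababab\cdots\;)(\;\cdots a\ov a\, a\ov a\, a\ov a]$, a closed word of length $(0,1)$ whose lower block is $(ab)$-periodic and whose upper block is $(a\ov a)$-periodic. Exactly as in \reffig{figxw} --- which conjugates \emph{any} two finite words of equal length, not just letters --- prepending $ab$ shifts each block by $2$, giving the identity $ab\,u = u\,a\ov a$ already in the monoid $W(A,\GG)$. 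Since $a\ov a=1$ holds in $Q$ (it is the relator for the one-letter word $a$) and $u$ is invertible in $Q$, this yields $\psi(ab)=1$ at once; no relator $u\ov u=1$ for an infinite $u$ and no telescoping is needed. So you already had the required tool in hand (your word $t$); the missing idea is simply to conjugate the length-$2$ word $ab$ to the relator $a\ov a$, rather than conjugating the letter $a$ to the letter $b$.
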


\begin{proof}
The image of $F(\SS)$ is not trivial, because it is non-trivial in the group 
$\oplus_{i \in \Omega} \Z / 2\Z$. 
It is therefore enough to show that $\psi(ab) = 1$ for all
$a,b \in \GG$. Consider the following closed word $u$  of length $(0,1)$:
 $$u = [ababab\cdots\;)(\;\cdots a\ov{a}a\ov{a}a\ov{a}]$$
In $W(A,\GG)$ we have $abu=ua\ov{a}$.
Now,  $\psi(a\ov{a})=1$ implies    $\psi(ab)=1$.
\end{proof}

Continuing with $F(\SS)$, consider 
 the following word $w$ 
 of length $(0,2)$, which is product of two freely reduced  words where $a,b \in \GG$ with
 $a \not= \ov b$:
$$w=[aaa\cdots\;)(\;\cdots aaa ]\cdot[\ov{a}\ov{a}\ov{a}\cdots\;)(\;\cdots bbb]$$

It is natural to allow (and we will do) the cancellation of factors $a\ov{a}$ inside $w$. 
The shape of the word remains the same, but the length
is decreasing to any value $(-2n,2)$ with $n \in \N$.
If next we wish to embed $F(\SS)$ into any quotient structure of  $W(A,\GG)$,
then we cannot cancel however the whole middle
part $(\;\cdots aaa ]\cdot[\ov{a}\ov{a}\ov{a}\cdots\;)]$, i.e.,  $w$ cannot become equal to  $v= [aaa\cdots\;)(\;\cdots bbb]$
in this quotient. Indeed, assume by contradiction  $w = v$, then: 
  \[
\begin{array}{rcl}
aa v = avb &=& a[aaa\cdots\;)(\;\cdots bbb]\,b\\
&=& a\,w\,b\\
&=&[aaa\cdots\;)(\;\cdots aaa ]\cdot a \ov{a}\cdot [\ov{a}\ov{a}\ov{a}\cdots\;)(\;\cdots bbb]\\
&=& w= v.
\end{array}
\]
This implies $a^2= 1$, a contradiction.

\section{The group $\Ext(A,G)$}
Proposition~\ref{collapse} shows that, in general,  the free group $F(\SS)$ does not naturally embed into the greatest quotient group of  
 $W(A,\GG)$.
Nevertheless, in this section we modify the construction to be able to represent a group $G$ by infinite words from $W(A,\GG)$. As above, 
we let $G$ be a group generated by  $\Sigma$ and $\pi:\GG^*\to G$ be the 
induced presentation with $\GG= \SS\cup \SS^{-1}$. 
Recall that
 $R(A,G)$ denotes the set of closed \Gred words, 
 i.e.:  $$R(A,G)= \set{u \in W(A,\GG)}{u \mbox{ is } \mbox{\Gred}}.$$
  Let $\MM(A,G)$ be the following quotient 
monoid of $W(A,\GG)$:
\begin{equation*}
 \MM(A,G) = W(A,\GG)/\set{u\ell\ovv{r}{u}=1}{u \in R(A,G), \ell,r \in \GG^*, \pi(\ell)=\pi(r)}.
\end{equation*}

\begin{definition}
We define  
$\Ext(A,G)$ as the image of $R^*(A,G)$ in $\MM(A,G)$ under the canonical  epimorphism $W(A,\GG) \to \MM(A,G) $.
\end{definition}

In the following proposition we collect some simple results on  $\Ext(A,G)$. 

\begin{proposition}
Let $G$ be a group generated by a set  $\Sigma$ and $A = \oplus_{i \in \Omega} \gen{t_i}$ as above. Then:

\begin{enumerate}[1)]
\item $\Ext(A,G)$ is  a group  (a subgroup of $\MM(A,G) $);
\item every submonoid of  $\MM(A,G)$ which is a group sits inside the group $\Ext(A,G)$, so $\Ext(A,G)$ is the group of units  in $\MM(A,G)$;
\item the 
inclusion $\GG \sse G$ induces 
a homomorphism  $\pi^A: G\to \Ext(A,G)$. 
\end{enumerate}
\end{proposition}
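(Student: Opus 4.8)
The plan is to establish the three items in the order (1), (3), (2): the group structure of item~(1) is needed to make sense of the inverse used in item~(3), and item~(2) is the only part that requires genuine work.

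For (1), I would first record that the involution on $W(A,\GG)$ preserves $G$-reducedness: a finite factor of $\ov u$ is the involution of a finite factor of $u$, and $\pi(\ov f)=\pi(f)^{-1}$, so $u\in R(A,G)$ implies $\ov u\in R(A,G)$, whence $R^*(A,G)$ is closed under the involution. Taking $\ell=r=1$ in the defining relations of $\MM(A,G)$ gives $u\ov u=1$ for every $u\in R(A,G)$, and applying the same relation to $\ov u$ gives $\ov u u=1$; thus each $u\in R(A,G)$ is a two-sided unit with inverse $\ov u$. For a general element $[u_1\cdots u_n]$ of $\Ext(A,G)$ with $u_i\in R(A,G)$, the word $\ov{u_n}\cdots\ov{u_1}$ again lies in $R^*(A,G)$ and cancels it from the inside out via $u_i\ov{u_i}=1$ (resp.\ $\ov{u_i}u_i=1$). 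Hence $\Ext(A,G)$ is a submonoid of $\MM(A,G)$ in which every element is invertible, i.e.\ a subgroup.

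For (3), every letter $a\in\GG$ satisfies $\pi(a)\neq 1$, so single letters are $G$-reduced and $\GG\subseteq R(A,G)$; therefore $a\mapsto[a]$ maps $\GG$ into the group $\Ext(A,G)$ and extends, by freeness of $\GG^*$, to a monoid homomorphism $\phi\colon\GG^*\to\Ext(A,G)$ with $\phi(w)=[w]$. It remains to factor $\phi$ through $\pi$. The relations with $u=1$ give $\ell\ov r=1$ whenever $\pi(\ell)=\pi(r)$; specialising further shows $[\ov r]=[r]^{-1}$, so $\phi(\ell)\phi(r)^{-1}=[\ell]\,[\ov r]=[\ell\ov r]=1$ and hence $\phi(\ell)=\phi(r)$. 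This yields the desired homomorphism $\pi^A\colon G\to\Ext(A,G)$ with $\pi^A\circ\pi=\phi$; its injectivity is the separate and harder assertion of \refcor{cor:embedG}.

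For (2), since $\Ext(A,G)$ is a group-submonoid containing $1$, and any submonoid that is a group with identity $1$ consists of units, the statement is equivalent to the single reverse inclusion that \emph{every} unit of $\MM(A,G)$ already lies in $\Ext(A,G)$; combined with (1) this gives that $\Ext(A,G)$ is precisely the group of units. This reverse inclusion is the main obstacle, and it cannot be handled factorwise: a product of closed words may be invertible without the individual factors being $G$-reduced, so the conclusion must be extracted from the global cancellation. My plan is to lift a unit $x=[w]$ and its inverse $[v]$, so that $wv$ and $vw$ are congruent to $1$, and to analyse this cancellation through the rewriting system obtained by orienting the defining relations $u\ell\ov r\,\ov u\to 1$, using that the $A$-valued length is always non-negative and is reduced only when a factor cancels against a shift of its own involution. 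Proving that invertibility forces $w$ to rewrite to an element of $R^*(A,G)$ is exactly where the (non-terminating, generally infinite) rewriting system must be brought under control, and I expect this step to rest on the confluence analysis developed in \refsec{sec:csoverdoag}, which is the technical heart of the paper.
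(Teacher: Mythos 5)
Your parts (1) and (3) are correct and essentially coincide with the paper's own (terser) arguments: the closure of $R(A,G)$ under the involution, the relations $u\ov u=1$ obtained from $\ell=r=1$, and the relations $\ell\,\ov r=1$ obtained from $u=1\in R(A,G)$ are exactly what is needed. The genuine problem is part (2): you never prove it. You reduce it, correctly, to showing that every unit of $\MM(A,G)$ lies in $\Ext(A,G)$, but then you only announce a plan --- orient the relations, bring the non-terminating system under control, and lean on the confluence analysis of \refsec{sec:csoverdoag} --- and explicitly say you \emph{expect} the key step to follow from it. As written, part (2) is an unproven assertion. Moreover, the machinery you invoke is both heavier than needed and not by itself sufficient: confluence of $S$ would give you $wv \RAS{*}{S} 1$ for representatives $w$ of the unit and $v$ of its inverse, but you would still owe an argument extracting from that derivation a representative of $[w]$ in $R^*(A,G)$; and resting this early proposition on \refsec{sec:csoverdoag} inverts the paper's expository order (not circular, since the confluence proof does not use the proposition, but a sign that you have missed the intended route).

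The missing idea is an elementary invariance, and it is all the paper uses. Observe that $R^*(A,G)$ is closed under cutting at an arbitrary position (a finite factor of a \Gred word is \Gred, so each cut splits one \Gred factor into two \Gred pieces), is closed under the involution, and contains every relator $u\ell\ovv{r}{u}$ (single letters are \Gred, so $\GG^*\sse R^*(A,G)$, and $u,\ov u\in R(A,G)$). Consequently, inserting or deleting a relator inside a word of $R^*(A,G)$ keeps you inside $R^*(A,G)$, and since $1\in R^*(A,G)$, induction along any chain of relator insertions and deletions shows that the \emph{entire congruence class of $1$} is contained in $R^*(A,G)$. Combined with the observation you state but do not exploit --- only the trivial word is invertible in $W(A,\GG)$, because lengths are non-negative and add under concatenation, so every equality $wv=1$ in $\MM(A,G)$ is realized by such a chain --- this gives: if $[w]$ is a unit with inverse $[v]$, then the word $wv$ itself lies in $R^*(A,G)$, and cutting at the border between $w$ and $v$ yields $w\in R^*(A,G)$, i.e.\ $[w]\in\Ext(A,G)$. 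No orientation of the rules, no confluence, and no control of the non-terminating system is required; your worry that the conclusion ``cannot be handled factorwise'' dissolves precisely because $R^*(A,G)$ is closed under cutting.
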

\begin{proof}
To see 1) observe that  every element in $u \in R(A,G)$ has $\ov u$ as an inverse in $\Ext(A,G)$, so  $\Ext(A,G)$ is a group. 

Notice that only the trivial word is invertible in $W(A,\GG)$ since concatenation does not decrease the length. Hence every  equality $w\ov{w}  = 1$ for a non-trivial $w$ in $W(A,\GG)$ comes from the defining relations in $\MM(A,G)$.  Observe, that the defining relations are applicable only to words from  $R^*(A,G)$, the set $R^*(A,G)$ is closed under such transformations.  This shows that  $\Ext(A,G)$ is the group of units  in $\MM(A,G)$, as claimed in 2).
 
3) is obvious since  $G = \GG^*/\set{\ell\ov{r}=1}{\pi(\ell)=\pi(r)}$
and $1\in R(A,G)$. 
\end{proof}
 
 Several important remarks are due here. 
 
 \begin{itemize}
 
 \item It is far from 
obvious that the homomorphism $\pi^A: G\to \Ext(A,G)$ is injective.  However, this is true and  we  prove it  later in \refcor{cor:embedG}.

\item  It is not claimed that the definition of $\MM(A,G)$ (or $\Ext(A,G)$) 
is independent of the choice of $\GG$ and $\pi$, but our main results 
hold through for any such  $\GG$ and $\pi$ thus justifying   the 
(sloppy) notations $\MM(A,G)$ and $\Ext(A,G)$.
%
%
  \item 
If $G = F(\SS) $ is the free group with basis $\SS$, then the definition
of $\MM(A,G)$ can be rephrased by saying that it 
is the quotient
monoid of $W(A,\GG)$ with defining equations $u \ov u = 1$ for all freely reduced 
closed words $u$. 

\item
It is not true in general 
that $\Ext(A,G)$ can be defined as the 
quotient group 
$$\Ext(A,F(\SS))/ \set{\ell=r}{ \ell,r \in \GG^*, \pi(\ell)=\pi(r)}.$$ Indeed, let $r$ be a cyclically reduced word of length $m$ such that 
$r= 1$ in $G$. In $\Ext(A,F(\SS))$ for every $a \in \GG$ the words $a^m$ and $r$ are conjugated
since 
$$a^m  [a^ma^ma^m\cdots\;)(\;\cdots rrr] = [a^ma^ma^m\cdots\;)(\; \cdots rrr] r.$$
Therefore, $a^m = 1$ in $\Ext(A,F(\SS))/ \set{\ell=r}{ \ell,r \in \GG^*, \pi(\ell)=\pi(r)}$, which may not be the case in $G$ (which is a subgroup of $\Ext(A,G)$).

\end{itemize}
 Nevertheless,  $\Ext(A,F(\SS))$ satisfies some universal property. 
 
 \begin{proposition}
 Every group $G$ generated by $\SS$ is isomorphic to the  canonical  quotient  of the subgroup in $\Ext(A,F(\SS))$ generated by $R(A,G)$. 
\end{proposition}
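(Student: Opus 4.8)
The plan is to realise the isomorphism by the obvious map in one direction and then to control the reconstruction carefully. Write $H \le \Ext(A,F(\SS))$ for the subgroup generated by (the images of) $R(A,G)$, and let $\theta : W(A,\GG)\to\MM(A,G)$ be the canonical epimorphism. By the canonical quotient of $H$ I mean $Q = H/N$, where $N$ is the normal closure in $H$ of the set $\set{\ell\ov r}{\ell,r \in \GG^*,\ \pi(\ell)=\pi(r)}$ of defining relators of $G$. First I would produce the easy \homo $\psi : G \to Q$. Since single letters are \Gred (we assume $a \neq 1$ in $G$), each $a \in \GG$ already lies in $H$, so $a \mapsto [a]$ extends to a \homo $F(\SS) \to Q$; by the very definition of $N$ it kills every relator, hence factors through a \homo $\psi : G \to Q$. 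Because $H$ is generated by $R(A,G)$ and every finite word is a product of letters, the image $\psi(G)$ is exactly the subgroup of $Q$ generated by the finite words.

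For injectivity of $\psi$ I would compare $Q$ with $\Ext(A,G)$. The assignment $[u] \mapsto \theta(u)$ for $u \in R(A,G)$ should define a \homo $\Lambda : Q \to \Ext(A,G)$: one checks that the relations surviving in $H$ (inherited from $\MM(A,F(\SS))$) together with the relators in $N$ all hold in $\Ext(A,G)$, since the involution relations $u\ov u = 1$ for $u\in R(A,G)$ and the finite relators $\ell\ov r$ with $\pi(\ell)=\pi(r)$ are both among the defining relations of $\MM(A,G)$. Granting $\Lambda$, the composite $\Lambda\circ\psi : G \to \Ext(A,G)$ is the canonical map $\pi^A$, which is injective by \refcor{cor:embedG}. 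Hence $\psi$ is injective, and $G$ embeds as the subgroup of finite words of $Q$.

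The heart of the matter, and the step I expect to be the main obstacle, is surjectivity of $\psi$: in $Q$ every \Gred word, in particular every infinite one, must already equal a finite word modulo the $G$-relations. The danger is precisely the phenomenon displayed just before the proposition: an infinite word such as $t=[a^m a^m a^m\cdots\;)(\;\cdots rrr]$ conjugates a power $a^m$ to a relator $r$, so that imposing $r=1$ would force $a^m=1$ and over-collapse the group. The key observation to exploit is that such conjugators are \emph{not} available inside $H$. Indeed, when $\pi(r)=1$ the word $t$ is not \Gred, because every length-$m$ window of its $\cdots rrr$ part is a cyclic conjugate of $r$ and hence trivial in $G$; moreover any factorisation of $t$ into closed words must contain a factor spanning the junction from the infinite $a$-part into the infinite $r$-part, and such a factor again contains a trivial finite factor. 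Thus $t\notin R^*(A,G)$, so the offending conjugacy cannot be written with \Gred building blocks and does not enter $H$.

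To finish, I would upgrade this observation into a normal-form statement for $H$, and here lies the real work. Using the strong-confluence machinery of \refsec{sec:csoverdoag}, one would show that every element of $H$ admits a representative assembled from \Gred blocks to which no $G$-relation straddling two blocks can be applied; consequently, passing to $Q=H/N$ rewrites each block using only its finite factors and collapses every infinite generator to a finite word, while the exclusion of bad conjugators guarantees that this collapsing never identifies distinct elements of $G$. Granting such a normal form, $\psi$ is surjective as well, hence an isomorphism $G \xrightarrow{\ \cong\ } Q$, with inverse induced by $[u]\mapsto \pi(u)$ on finite words. The confluence-based normal-form argument that simultaneously rules out over-collapse and forces every infinite word into $\psi(G)$ is the decisive and most delicate point.
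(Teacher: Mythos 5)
The paper's entire proof of this proposition is the single sentence ``The statement is obvious,'' so the authors read it as a near-tautological, presentation-level remark; your formalization turns it into a strong collapsing claim, and under that reading the statement is \emph{false} --- so the surjectivity step you defer as ``the real work'' is not merely delicate, it cannot be carried out. Concretely, with your $Q=H/N$: take $G=F(\SS)$ itself (certainly a group generated by $\SS$). Then \Gred{} coincides with freely reduced, so $H$ is all of $\Ext(A,F(\SS))$; every relator $\ell\ov r$ with $\pi(\ell)=\pi(r)$ is already trivial in $\Ext(A,F(\SS))$, so $N=\oneset{1}$ and $Q=\Ext(A,F(\SS))$. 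But $\Ext(A,F(\SS))$ contains elements of order $2$ such as $[aaa\cdots\;)(\;\cdots\ov a\,\ov a\,\ov a]$ (nontrivial by \refthm{thm:confl}, cf.\ \refcor{freeredS}) and the strictly increasing filtration of \refprop{prop:huge}, so it is not isomorphic to the torsion-free group $F(\SS)$. The same failure is visible for any infinite $G$ through your own map $\Lambda$: by \reflem{karl}, an infinite \Gred{} word $u$ satisfies $\rdeg{\theta(u)}=\deg(u)>0$, whereas every element of $\psi(G)$ has reduced degree at most $0$; hence $[u]\notin\psi(G)$ and $\psi$ is never onto. Your diagnosis pointed in the wrong direction: the danger is not over-collapse (your observation that the bad conjugator $t=[a^ma^m\cdots\;)(\;\cdots rrr]$ lies outside $H$ is correct, and matches the paper's bullet-point discussion preceding the proposition), but rather that imposing the finite relators of $G$ on $H$ cannot shorten infinite words at all, so $Q$ remains enormously larger than $G$. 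No confluence-based normal form can rescue this; confluence of $S$ is exactly what \emph{proves} the infinite generators stay outside $\psi(G)$.

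Two further points. First, even the part you do carry out leans on an unproved step: well-definedness of $\Lambda$ requires that \emph{every} relation of $\Ext(A,F(\SS))$ holding among products of \Gred{} words transfers to $\Ext(A,G)$, and such relations may be witnessed by cancellations $v\ov v\rightarrow 1$ with $v$ freely reduced but not \Gred; this is repairable (in a finite derivation starting from a product of \Gred{} words, any such $v$ is a concatenation of finitely many factors of \Gred{} words, and then $v\ov v=1$ follows in $\MM(A,G)$ piece by piece), but you assert it rather than prove it --- and the paper's own warning that $\Ext(A,G)$ is \emph{not} $\Ext(A,F(\SS))$ modulo the $G$-relators shows one cannot treat maps between the two extensions as formalities. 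Second, since your reading renders the proposition false while the paper declares it obvious, the correct move would have been to question the interpretation of ``canonical quotient'': the authors evidently intend only the loose statement that the data $(H,\,R(A,G),\,\text{the canonical relations of }\MM(A,G))$ recovers $G$ canonically (with $G$ realized among the finite words, as \refcor{cor:embedG} later makes precise), not that $H$ modulo the normal closure of the finite relators \emph{is} $G$. What you have actually established, modulo the $\Lambda$ gap, is an embedding $G\hookrightarrow Q$; the claimed isomorphism, as you formalized it, is refuted.
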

\begin{proof}
The statement is obvious. 
\end{proof}

\section{Confluent rewriting systems over \nonarch words}
\label{sec:csoverdoag}

Our goal here is to construct a 
confluent rewriting system $S$ over the monoid  $W(A,\GG)$ such that  
$$\MM(A,G) =  W(A,\GG)/S$$
and $S$ has the following form: 
\begin{equation}\label{eq:hugo}
 S = S_0 \cup \set{u\overline{u}\rightarrow 1}{u \in R(A,G) \text{ and } 
 u \text{ is infinite}},
\end{equation}
where  $S_0 \subseteq \GG^* \times \GG^*$ is a rewriting system for $G$ satisfying the following conditions:
\begin{enumerate}
 \item $\GG^* / S_0 = G$
 \item For all $a \in \GG$ we have $(a\overline{a}, \, 1) \in S_0$. 
 \item If $(\ell,r) \in S_0$,  then $(\ov \ell,\ov r) \in S_0$. 
 \item $1 \in \GG^*$ is $S_0$-irreducible.
 \item $S_0$ is confluent.
\end{enumerate}
In general, $S_0$ is neither finite nor terminating, but these conditions are not crucial for the moment, so we do not care. 

\begin{lemma}
For any group $G$ generated by $\SS$ there is a rewriting system $S_0\subseteq \GG^* \times \GG^*$ satisfying the conditions 1-5 above.  Moreover, 
if $G$ is finitely presented, then one can choose $S_0$ to be finite.
\end{lemma}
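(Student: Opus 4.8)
The plan is to treat the two assertions separately. For the existence of some (possibly infinite, non-terminating) $S_0$ the idea is to be maximally generous: take \emph{all} $\pi$-preserving rules with nonempty left-hand side, so that confluence comes essentially for free. The finite refinement for finitely presented $G$ is then an independent and genuinely more delicate matter.

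Concretely, for the first part I would set
\[
 S_0 = \set{(\ell, r) \in \GG^* \times \GG^*}{\pi(\ell) = \pi(r) \text{ and } \ell \neq 1}.
\]
Conditions 2--4 are then immediate. Condition~2 holds since $\pi(a\ov a) = 1 = \pi(1)$ and $a\ov a \neq 1$. Condition~3 holds since $\pi(\ov \ell) = \pi(\ell)^{-1} = \pi(r)^{-1} = \pi(\ov r)$ and $\ov \ell \neq 1 \Leftrightarrow \ell \neq 1$. Condition~4 holds because every rule has a nonempty left-hand side, so $1$ cannot be written as $p\ell q$, giving $1 \in \IRR(S_0)$. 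For condition~1, each rule preserves $\pi$, so $\DAS{*}{S_0} \subseteq \ker\pi$; conversely, if $\pi(x)=\pi(y)$ and $x \neq 1$ then $(x,y)\in S_0$ gives $x \RA{S_0} y$ in one step, while if $x = 1$ then $\pi(y)=1$ and, for $y \neq 1$, the rule $(y,1)\in S_0$ gives $y \RA{S_0} 1$; hence $\DAS{*}{S_0}=\ker\pi$ and $\GG^*/S_0 = G$.

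The confluence (condition~5) is where the choice of $S_0$ pays off: I would verify the stronger property of \emph{strong confluence} and then invoke Proposition~\ref{frida}. Given a one-step peak $y \LA{S_0} x \RA{S_0} z$, both rules preserve $\pi$, so $\pi(y)=\pi(z)=:g$. If $g \neq 1$ in $G$ then $y \neq 1$ (as $\pi(1)=1$), and $(y,z)\in S_0$ yields $y \RA{S_0} z$, so $y$ and $z$ meet at $w=z$ in at most one step each. If $g = 1$ then $y$ and $z$ are both $\pi$-trivial and each reaches $w=1$ in at most one step via $(y,1)$ resp.\ $(z,1)$. Thus every one-step peak resolves within at most one step on each side, so $S_0$ is strongly confluent, hence confluent.

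For the \textbf{finite} case I would start from a finite presentation $G = \Gen{\SS}{R}$ with $R$ finite. The congruence $\ker\pi$ is then finitely generated, namely by the free-reduction rules $\set{a\ov a \to 1}{a \in \GG}$ together with the (bar-closed) relator rules $\set{r \to 1}{r \in R}$, and this finite system already satisfies conditions~1--4. The hard part will be condition~5: this particular finite system is almost never confluent, and a naive Knuth--Bendix completion need not stay finite, so the difficulty is to force a common-descendant property inside a bounded rule set. One cannot work length-monotonically here, since a freely reduced $\pi$-trivial word of odd length has no shorter equal-image neighbour; hence the resolution must adjoin \emph{length-increasing} (and reversible, length-preserving) $\pi$-preserving rules that create the cancellation opportunities needed to route any two equal-image words to a common descendant. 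I expect this finiteness-versus-confluence tension to be the real obstacle. Note that termination is neither available nor required: a finite confluent but non-terminating system need not decide $\ker\pi$, which is consistent with the existence of finitely presented $G$ with unsolvable \WP.
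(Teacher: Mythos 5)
Your first half is correct and coincides with the paper's own construction: the paper also takes $S_0$ to be \emph{all} rules $u \ra v$ with $u$ non-empty and $u = v$ in $G$ (it additionally asks $u \neq v$ as words, which is immaterial), and your verification of conditions 1--4 together with the strong-confluence argument for condition 5 --- which the paper does not even spell out --- is accurate.

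The genuine gap is the finite case, which you flag as an obstacle but do not prove, and your instinct to approach it via completion of the minimal system $\set{a\ov a \ra 1}{a \in \GG} \cup \set{r \ra 1}{r \in R}$ points in the wrong direction. The paper's idea is to keep your "maximally generous" philosophy and simply \emph{truncate the big system by size}: for a finite presentation with relations $R$, set $k = \max\set{\abs{\ell}+\abs{r}}{\ell \ra r \in R}$ and retain only those rules $u \ra v$ of the general $S_0$ with $\abs{u}+\abs{v} \leq k+2$. Since $\GG$ is finite, this set is finite, and conditions 1--4 survive. Confluence survives as well, with no appeal to termination or Knuth--Bendix: the truncated system still contains $a\ov a \ra 1$, $r \ra 1$, the letter-anchored insertions $a \ra ra$ and $a \ra ar$ for $r \in R \cup \ov R$ (of size $\abs{r}+2 \leq k+2$), and $b \ra ba\ov a$, $b \ra a\ov a b$ (relators have length at least $2$ because letters are non-trivial in $G$; the degenerate case $R = \es$ is just free reduction, which is convergent). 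Hence every elementary insertion or deletion of a relator or of a cancelling pair, performed on a non-empty word, is a \emph{single directed} rule application --- insertions are anchored at an adjacent letter --- and a passage through the empty word, $w \ra 1 \ra w'$, can be rerouted as $w \RA{S_0} w'w \RA{S_0} w'$. Consequently any two non-empty words with the same image in $G$ rewrite into one another, and any non-empty word representing $1$ rewrites to $a\ov a$ and then to $1$; confluence then follows exactly as in your first part. So your correct observation that length-increasing rules are indispensable is compatible with finiteness: the bounded truncation already supplies them, and the "finiteness-versus-confluence tension" you anticipated dissolves.
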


\begin{proof}
Let $G = \GG^*/R$ for some set of defining relation $R$. In general, let $S_0$ be the set of all rules $u \ra v$, where 
$u$ is non-empty and  and $u \neq v $ as words, but $u = v $ in $G$. Notice that there are no rules $1\ra r$ in $S_0$, so $1  \in IRR(S_0)$. However, for every $r \in R\cup \ov{R}$ and every letter $a \in \SS$ the relations $a \ra ra$ and $a\ra ar$ are in $S_0$, so one can insert any relation $r$ in a word, thus simulating the rule $1 \ra r$.  

In the case when $R$ is finite consider only those rules $u \ra v$ from $S_0$ such that   $\abs{u}+\abs{v} \leq k+2$, where $k = \max\{\abs{\ell}+\abs{r}\mid \ell \ra r \in R\}$. Notice, again that all the rules of the type  $a \ra ra$ and $a\ra ar$ are in $S_0$.
\end{proof}

Clearly:
\begin{equation*}
 M(A,\GG) = W(A,\GG) / S.
\end{equation*}

The following lemma will be used only later.  The proof shows however our basic
techniques to factorize and to reason about rewriting steps. The reader is therefore invited to read the proof carefully. 

\begin{lemma}\label{lem:otto}
Let $x \in R(A,G)$ be a non-empty \Gred word. Then 
$x\RAS *S y$ implies both $x\RAS *{S_0} y$ and $y$ is a non-empty word. 
\end{lemma}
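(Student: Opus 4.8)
The plan is to deduce both assertions from one fact: along a rewriting sequence issuing from a \Gred word, \emph{no infinite rule $u\ov u\ra1$ is ever applicable}. Granting this, every step of $x\RAS *S y$ uses a rule of $S_0$, which is exactly $x\RAS *{S_0}y$; non-emptiness is then a short separate matter treated at the end. The starting observation is that an infinite rule can fire on a word $w$ only if $w$ has a factor $u\ov u$ with $u$ infinite, and such a factor always carries a central free cancellation: if $u:[1,\gamma]\to\GG$ then the two middle letters of $u\ov u$ are $u(\gamma)$ and $\ov{u(\gamma)}=u(\gamma)^{-1}$, so $u\ov u$ contains the finite factor $a\ov a$. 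Hence a freely reduced word has no factor $u\ov u$, and in particular the \Gred word $x$ admits no infinite-rule step. So the first step of the sequence lies in $S_0$, and the whole problem is to propagate the absence of $u\ov u$-factors along the derivation, word by word.

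The subtlety is that \Gred-ness is not preserved by a single $S_0$-step — a rule may create a cancellation, e.g. $ab\ra a\ov a c$ when $\pi(b)=\ov ac$ — so one cannot simply iterate the observation above, and intermediate words genuinely need not be \Gred. I would therefore prove, by induction on the number of $S_0$-steps, the quantitative invariant: \emph{there is a finite bound on the length $\abs v$ of any factor $v\ov v$ occurring in a word $w$ reachable as $x\RAS *{S_0}w$.} Since an infinite factor $u\ov u$ produces factors $v\ov v$ with $\abs v$ arbitrarily large (take $v$ to be longer and longer suffixes of $u$), such a bound rules out every infinite-rule step, which is what we need. It is essential here to control factors of the symmetric shape $v\ov v$ rather than arbitrary $\pi$-trivial factors: the latter can be arbitrarily long even inside a \Gred word (for instance, over an abelian base a factor $a^{n}b a^{-n}$ represents a fixed element and stays \Gred, and a single $S_0$-edit can glue a matching suffix onto it to create a long $\pi$-trivial, \emph{non-palindromic} block), whereas it is precisely the palindromic factors that an infinite rule requires.

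The heart of the argument is a pull-back through the rewriting, using the factorisation technique announced before the lemma: a single step is written $w'=p\ell q\RA{S_0}prq=w$ with $\pi(\ell)=\pi(r)$, and a hypothetical factor $v\ov v$ of $w$ is analysed according to the position of the edited block $r$. If $r$ is disjoint from $v\ov v$ the factor already occurs in $w'$ and the inductive bound applies; the informative case is when $r$ meets $v\ov v$. Here one splits $v\ov v$ as $L\,M\,R$ with $M$ the smallest centred sub-block containing all edited letters, so that $L$ and $R$ are inherited verbatim from the earlier word; palindromicity forces $L=\ov R$ and $\pi(M)=1$ (a centred sub-block of $v\ov v$ is again of the form $v'\ov{v'}$). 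Replacing $M$ by its $\ell$-preimage and using $\pi(\ell)=\pi(r)$ keeps the image trivial, so the earlier word contains $L\,M'\,R$ with $\pi(L\,M'\,R)=1$ of length at least $2\abs v-\sum\abs{r_i}$; iterating back to $x$ turns a sufficiently long palindrome into a finite factor of $x$ of length $\ge2$ with trivial image, contradicting that $x$ is \Gred. I expect the main obstacle to be exactly this bookkeeping: controlling the position shifts caused by rules with $\abs\ell\neq\abs r$, handling edits that straddle the boundary of $v\ov v$ or sit next to its centre (so that the clean margins $L,R$ shrink), and making the "smallest centred block" pull-back precise so that $\pi$ of the central region is genuinely preserved.

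Finally, non-emptiness follows once we know the sequence stays inside $S_0$. Each $S_0$-rule changes the total length by $\abs r-\abs\ell$, an element of height $0$, so over finitely many steps $\deg(\abs y)=\deg(\abs x)$. If $x$ is infinite this forces $y$ infinite, hence $y\neq1$; if $x$ is finite then $y$ is finite with $\pi(y)=\pi(x)$, and a non-empty \Gred finite word has $\pi(x)\neq1$ (its whole length-$\ge2$ body cannot be trivial by \Gred-ness, and single letters are $\neq1$ by assumption), so again $y\neq1$.
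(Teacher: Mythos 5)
Your overall strategy coincides with the paper's: show that no rule $u\ov u\ra 1$ can ever fire along a derivation starting from the \Gred word $x$, so that every step is an $S_0$-step, and then dispose of non-emptiness separately (your final paragraph is correct, and is exactly the paper's terse closing remark made explicit). The genuine gap is in your inductive pull-back. Your invariant bounds factors of the shape $v\ov v$, but your own main case produces, one step earlier, a factor $LM'R$ that is merely $\pi$-trivial and no longer of that shape; so the induction as stated does not close, and what you would actually have to iterate is a pull-back of $\pi$-trivial factors. That is precisely where the argument breaks: a $\pi$-trivial factor passes through a step $w'=p\ell q\RA{S_0}prq$ only if the edited block is wholly inside it (replace $r$ by $\ell$, using $\pi(\ell)=\pi(r)$) or wholly disjoint from it. If an edit straddles an end of the factor, you can neither trim the factor ($\pi$-triviality is not inherited by sub-factors) nor extend it to contain $\ell$ (the adjoined letters have uncontrolled image), and --- unlike in the palindromic case --- there is no centred sub-block to retreat to. So what you flag as ``bookkeeping'' (straddling edits, shrinking margins) is not bookkeeping: it is the missing idea, and your scheme has no valid inductive step at that point.

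The paper avoids per-step pull-backs entirely by a single global normalization of the derivation, and this is the device you need. Since each $S_0$-step edits a finite interval, the finitely many edit intervals occurring in $x\RAS{*}{S_0}y_0$ can be enlarged and joined \emph{in advance}, yielding aligned factorizations $x=x_1f_1\cdots x_{n-1}f_{n-1}x_n$ and $y_0=x_1g_1\cdots x_{n-1}g_{n-1}x_n$ with all $f_i\RAS{*}{S_0}g_i$ finite and the $x_i$ infinite, untouched, and common to both words; by construction no edit ever straddles a window boundary. If now $y_0=p\,u\ov u\,q$ with $u$ infinite and \Gred, the centre of $u\ov u$ cannot lie inside any $x_m$ (that would place a factor $a\ov a$ inside $x$), so it lies in some window $g_m$; and because $u$ is infinite, the window can be enlarged symmetrically about the centre, into $x_m$ and $x_{m+1}$ (which are factors of $x$), until $g_m$ is literally of the form $v\ov v$ --- this is your centred-palindrome observation, applied once, in a position where it is safe. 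Then $f_m\RAS{*}{S_0}1$ (confluence of $S_0$ together with irreducibility of $1$ upgrades $\pi(f_m)=1$ to an actual reduction), so $f_m$ is a $\pi$-trivial finite factor of the \Gred word $x$, forcing $f_m=g_m=1$ as words; but then the last letter of $x_m$ and the first letter of $x_{m+1}$ form $a\ov a$ inside $x$, a contradiction. Replacing your step-by-step induction by this one global factorization repairs the proof; the rest of your proposal (the reduction to central cancellations, and the degree and $\pi$ argument for non-emptiness) stands as is.
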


\begin{proof}
By contradiction, assume $x\RAS *S y$, but not  $x\RAS *{S_0} y$.  
Then there are an  infinite \Gred word $u\in R(A,G)$ and some closed word
$y_0$  such that $x\RAS *{S_0} y_0 \RAS *S y$
where the rule $u\ov u \ra 1$ applies to $y_0$. Note that 
rules of $S_0$ replace  left-hand sides inside finite intervals. These intervals can be made larger and if two of them are separated by a finite distance, then  we can join them. Hence we obtain a picture as follows where all $x_i$ are infinite, and
all $f_i$, $g_i$  are finite words:
\begin{align*}
x &= x_1f_1 \cdots x_{n-1}f_{n–1}x_n \\
y_0 &= x_1g_1 \cdots x_{n-1}g_{n–1}x_n = pu \ov uq, \\
pq &\RAS*S y, \\
f_i &\RAS*{S_0}g_i \mbox{ for }  1 \leq i \leq n. 
\end{align*}
The middle position of $y_0 = pu \ov uq$ between $u \ov u$ cannot be  inside some factor $x_m$ as $x$ is \Gred. The middle position meets therefore some finite factor $g_m$. Thus,  (as 
$u$ is  infinite) we can enlarge $f_m$ such that $f_m \RAS*{S_0}1$. 
This implies $f_m = g_m = 1$ as words, because $x$ is \Gred and $1$ is irreducible w.r.t.{} 
$S_0$. Let $a$ be the last letter of $u$, then it is the last letter of $x_m$
and $\ov a$ is the first letter of $x_{m+1}$, too. Hence $a \ov a$ appears as 
a factor in $x$. This is a contradiction, and therefore $x\RAS *{S_0} y$.

 Since  $x$ is a non-empty  \Gred word, we cannot 
 have  both $x\RAS *{S_0} y$ and $y = 1$. 
  \end{proof}

The main technical result of this section is the following theorem.
\begin{theorem}\label{thm:confl}
 The system $S \subseteq  \GG^* \times \GG^* \cup R(A,G) \times R(A,G)$ defined in Equation~\ref{eq:hugo} is
confluent on $W(A,\GG)$.
\end{theorem}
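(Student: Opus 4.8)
The plan is to establish confluence of $S$ through \refprop{frida}, by showing that every peak $y \; {\underset{S}{\Longleftarrow}}\; x \;\RA S\; z$ (one $S$-step on each side) can be closed, and to arrange the closures so that one side is a single step (or no step) and the other a short reduction; this is the strong-confluence pattern that yields confluence. The argument is a case analysis on the two applied rules: both from $S_0$, both of the form $u\ov u \ra 1$ with $u$ infinite, or one of each kind. The organizing device is exactly the factorization used in \reflem{lem:otto}: one writes the redexes against the maximal infinite \Gred factors of $x$, recalling that the finite intervals carrying $S_0$-activity may be enlarged and, when at finite distance, merged. Redexes occupying disjoint factors commute trivially, so only the overlapping configurations need work. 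To legitimately pass from these local closures to confluence in a \emph{non-terminating} system, I will keep each closure bounded (strong-confluence style) and, correspondingly, use a strongly confluent $S_0$, as produced by the construction lemma (where every finite word with $\pi$-value $1$ reduces to $1$ in one step, and in particular $w\ov w \ra 1$ is available for finite $w$).

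For two infinite redexes $u\ov u$ and $v\ov v$ the key observation is that each is an involution-palindrome, $\ov{u\ov u}=u\ov u$, whose only locus of cancellation is its centre: immediately to the left and right of the centre sit a letter $a$ and its inverse $\ov a$. Since $u$ and $\ov u$ are \Gred, hence freely reduced, neither half contains a factor $a\ov a$. Hence, if the two redexes overlap, the cancelling pair at the centre of $v\ov v$ cannot lie strictly inside one half of $u\ov u$, which forces the two centres to coincide. Thus one redex is nested in the other as $u = wv$ with $v$ a suffix of $u$, and $u\ov u = w\,v\ov v\,\ov w$. Reducing $u\ov u$ gives $1$, while reducing $v\ov v$ leaves $w\ov w$; as $w$ is a prefix of a \Gred word it is \Gred, so $w\ov w \RAS *S 1$ (by the infinite rule if $w$ is infinite, by $S_0$ if $w$ is finite). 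Both reducts meet at the context with the whole block deleted.

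Now suppose one step is $u\ov u \ra 1$ and the other an $S_0$-rule $\ell \ra r$. If $\ell$ meets a half of $u\ov u$, write $u = p\ell q$, so the $S_0$-step produces the block $u'\ov u$ with $u' = prq$. Here I use the hypotheses on $S_0$: from $\pi(\ell)=\pi(r)$ one gets $\pi(r\ov\ell)=1$, and $(\ov\ell,\ov r)\in S_0$. Peeling outward from the centre, $u'\ov u = p\,r\,q\ov q\,\ov\ell\,\ov p$, I cancel $q\ov q$ (a \Gred palindrome), then $r\ov\ell$ (a finite word equal to $1$ in $G$, hence $\RAS *{S_0} 1$ since $1$ is $S_0$-irreducible and $S_0$ is confluent), and finally $p\ov p$; this collapses $u'\ov u$ to $1$, so the $S_0$-reduct rejoins the $u\ov u\ra 1$-reduct in a bounded number of steps. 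Finally, two overlapping $S_0$-steps act inside a common finite factor with the infinite context frozen, so joinability is inherited from confluence of $S_0$ (hypothesis~(5)).

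The main obstacle is the overlapping geometry in the two mixed-infinite cases, and making the centre-coincidence argument rigorous: one must verify precisely that an infinite \Gred word admits no cancelling window except across a palindrome centre, so that overlapping infinite redexes are genuinely nested rather than staggered. The second delicate point is controlling the finite remainders — the blocks $w\ov w$, $r\ov\ell$ and $p\ov p$ produced above must be driven to the identity, and this is exactly where the structural properties of $S_0$ (that $1$ is irreducible, that $S_0$ is confluent and closed under the involution, and that $a\ov a\ra 1\in S_0$) are indispensable. Because the system is non-terminating, local confluence alone is insufficient; I therefore close every peak by the bounded, one-sided reductions described above and rely on a \emph{strongly} confluent $S_0$, so that the whole of $S$ enjoys a strong-confluence property and \refprop{frida} delivers confluence of $S$ on $W(A,\GG)$.
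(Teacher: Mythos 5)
Your proposal founders on two concrete points. First, the claimed dichotomy for two infinite redexes --- disjoint or nested with coinciding centres --- is false. Your free-reduction argument only shows that the centre of $v\ov v$ cannot lie \emph{strictly inside one half} of $u\ov u$; it does not prevent it from lying outside $u\ov u$ altogether, which produces genuinely staggered overlaps. Concretely, in $F(a,b)$ let $\ov u$ look like $[aaa\cdots)(\cdots aaa]$ and $v_2$ like $[bbb\cdots)(\cdots bbb]$, put $v=\ov u\,v_2$ and $x=u\,\ov u\,v_2\,\ov{v_2}\,u$: then $u\ov u$ is a prefix of $x$ and $v\ov v=\ov u\,v_2\,\ov{v_2}\,u$ is a suffix, the two redexes overlap in the infinite factor $\ov u$, their centres are distinct, and neither is nested in the other. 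This staggered configuration is exactly the case $m\neq M$ in the paper's proof --- which the paper explicitly calls the most difficult part and resolves via the overlapping factor $\widetilde x$, the words $w_g,w_h$, and \Gred words $U,V$ with $y\RAS{*}{S_0}V\ov V\,\ov{w_h}$ and $z\RAS{*}{S_0}\ov{w_g}\,U\ov U$, joined by confluence of $S_0$. Your case analysis silently discards it (and likewise the sub-case where a finite redex $\ell$ straddles the centre of $u\ov u$).

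Second, your confluence logic does not go through. In the mixed peak $x=c_1u\ov u c_2$ with $u=p\ell q$, your centre-outward peeling drives the $S_0$-reduct $c_1prq\ov q\,\ov\ell\,\ov p\,c_2$ back to $c_1c_2$ using \emph{two} applications of big rules (on $q\ov q$ and on $p\ov p$) separated by $S_0$-reductions, so $S$ itself is not strongly confluent, and for a non-terminating system local confluence with closures bounded by $k\geq 2$ steps does \emph{not} imply confluence: the standard counterexample $b\ra a$, $b\ra c$, $c\ra b$, $c\ra d$ closes every peak in at most two steps yet has the distinct normal forms $a$ and $d$. Strengthening $S_0$ to be strongly confluent does not repair this, since the obstruction is the pair of big-rule applications, not the $S_0$-steps. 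The missing device is the paper's composite relation $\RAS{}{\BG}=\RAS{*}{S_0}\circ\RAS{}{S}\circ\RAS{*}{S_0}$, for which strong confluence \emph{is} provable and suffices by \refprop{frida}: in the peak above, one $S_0$-step $r\ra\ell$ restores the block to $u\ov u$ and a single big step closes the diagram. Equally essential, and absent from your sketch, is the paper's symmetric-cancellation argument: when the residual block is not obviously reducible, one deletes a finite factor $h$ with $h\RAS{*}{S_0}1$ \emph{together with its mirror} $\ov h$, so that the block retains the palindromic shape $v\ov v$ with $v$ \Gred and one big rule finishes --- your sequential peeling destroys this shape and with it any hope of a one-step closure.
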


For technical reasons we replace the rewrite system $\RAS{}{S}$
by a new system which is denoted by  $\RAS{}{\BG}$. 
It is defined by 
$$\RAS{}{\BG} \ \  = \  \ \RAS{*}{S_0}\circ \RAS{}{S}\circ \RAS{*}{S_0}.$$
We have $x \RAS{}{\BG} y$ \IFF{} there is a derivation  
$x \RAS{+}{S} y$ which may use many times rules {}from $S_0$, but at most once a  rule 
 {}from the sub system $$\set{u\overline{u}\rightarrow 1}{u \in R(A,G) \text{ and } 
 u \text{ is infinite}}.$$ The notation is due to the fact that 
 we can think of \emph{Big} rules in this subsystem.

The proof of \refthm{thm:confl} is an easy consequence
of the following lemma. However, the proof of this lemma is somehow tedious,  technical, and rather long. 
\begin{lemma}\label{lem:confllem}
 The rewriting system $\RAS{}{\BG}$ is
strongly confluent on $W(A,\GG)$.
\end{lemma}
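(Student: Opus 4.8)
The plan is to verify the strong-confluence diamond for $\RAS{}{\BG}$ directly. Consider a peak $y_1 \LAS{}{\BG} x \RAS{}{\BG} y_2$ and unfold each step according to the definition as $x \RAS*{S_0} a_i \RAS{}{S} b_i \RAS*{S_0} y_i$ for $i = 1,2$, where the middle $\RAS{}{S}$-step is either an $S_0$-step or a single application of a big rule $u\ov u \ra 1$ with $u \in R(A,G)$ infinite. I must produce a $w$ with $y_i \RAS{\leq 1}{\BG} w$ for both $i$. Throughout, the confluence of $S_0$ (its defining property~5) is the engine that glues together the finite rewriting and that lets a pure $S_0$-derivation count as (at most) one $\BG$-step; the genuinely new content is to control the at most one big rule used on each branch.

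The basic tool is a one-sided commutation lemma between a big rule and $S_0$: if $a = p\,u\ov u\,q$ with $u$ infinite \Gred, and $a \RAS*{S_0} d$, then $pq$ and $d$ are joinable using $S_0$-rules together with at most one further big rule, applied on the $d$-side only. To prove it I locate each $S_0$-rewrite as acting inside $p$, inside $q$, or as meeting the block $u\ov u$; the first two cases descend to the deleted word $pq$ verbatim, and for the third I use that $S_0$ is closed under the involution (property~3) so that a rewrite inside $u$ is mirrored in $\ov u$, together with the fact that every finite factor of the \Gred word $u$ is itself \Gred. Here \reflem{lem:otto}, forbidding any big rule from reducing a single \Gred word, keeps the infinite skeleton of $x$ rigid and prevents the $S_0$-rewriting from manufacturing a competing deletion. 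This commutation lemma, combined with $S_0$-confluence applied to the two leading $S_0$-parts, settles at once the case in which neither branch uses a big rule (pure $S_0$-confluence) and the case in which exactly one branch does.

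The main case is when both branches apply a big rule. Using confluence of $S_0$ I transport both deletions onto a single word $c$, reducing the situation to: delete $U_1 = u_1\ov{u_1}$ from $c$ on one branch and $U_2 = u_2\ov{u_2}$ from $c$ on the other, with each $u_i$ infinite and \Gred. I then split on the relative position of the two deleted intervals. If they are disjoint the deletions commute and the diamond closes with exactly one big step on each side. If they overlap I exploit that each factor satisfies $\ov{U_i} = U_i$: its centre is the unique position where a letter is immediately followed by its inverse, and since $u_i$ is \Gred there is no nontrivial finite cancellation anywhere inside $U_i$. This rigidity forces the two centres to coincide, for an overlap with distinct centres would impose a periodicity on the overlap region producing a finite factor $c\ov c$ (hence a factor trivial in $G$), contradicting \Gred-ness. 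With coinciding centres the shorter factor is exactly the central block of the longer, so deleting the shorter leaves a strictly shorter \Gred anti-palindromic factor whose further deletion (one more big step on that branch) reproduces the result of deleting the longer; the branch that deleted the longer factor needs no further step, so the diamond closes within the $\leq 1$ budget.

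The hard part will be this overlap sub-case: proving that two infinite \Gred factors with $\ov{U_i}=U_i$ cannot overlap with distinct centres, and carrying out the attendant length and periodicity bookkeeping over the discretely ordered group $A$ rather than over $\N$. Equally delicate is the interaction inside the commutation lemma --- ensuring that an $S_0$-rewrite of a finite factor of an infinite $u$ does not secretly enable an alternative deletion --- which again rests on the involution-closure of $S_0$ and on \reflem{lem:otto}. These two points are what make the argument, though elementary in spirit, tedious and long; the remaining work is the routine gluing supplied by confluence of $S_0$.
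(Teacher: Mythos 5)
Your case analysis goes wrong at two points in the main (two-big-rule) case, and the second is fatal. First, the transport step: from $y_0 = p\,u\ov{u}\,q \LAS{*}{S_0} x \RAS{*}{S_0} z_0 = p'v\ov{v}\,q'$ you invoke confluence of $S_0$ to move both deletions onto one common word $c$, but the $S_0$-derivations joining $y_0$ and $z_0$ to $c$ may rewrite finite factors straddling the middle of $u\ov{u}$ (e.g.\ cancel $a\ov{a}$ or insert relators there), after which $c$ need not contain any factor of the exact shape $u'\ov{u'}$ at that location. Your commutation lemma does not repair this, since it only yields joinability with a big step \emph{on the other side}, not survival of the redex as a literal factor. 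The paper never performs this transport: it keeps each redex in its own word and compares them through a common skeleton $x = f_0x_1f_1\cdots x_nf_n$ in which all $x_i$ are infinite, all $f_i$ finite, and every $S_0$-step acts only inside the finite blocks, so that both redexes are located against the same rigid frame.

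Second, and decisively: your claimed rigidity --- that two overlapping factors $u_i\ov{u_i}$ must have coinciding centres --- is false, and your proposed proof of it (periodicity on the overlap produces a finite factor equal to $1$ in $G$) does not transfer from finite words to the \nonarch setting. Take $G = F(\SS)$ and $u = [aaa\cdots\;)(\;\cdots \ov{a}\ov{a}\ov{a}]$ of length $t$; then $u$ is freely reduced, hence \Gred, and $\ov{u} = u$, so $u\ov{u} = uu$. In $x = uuu$ the first two blocks and the last two blocks are overlapping big redexes with distinct centres, yet no finite factor $c\ov{c}$ appears anywhere, because each centre sits at a gap of the ordering that no finite factor straddles. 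So the distinct-centre configuration is not contradictory; it is exactly the case the paper's proof calls the most difficult ($m \neq M$ in its notation), and it is resolved constructively, not by exclusion: after normalizing so that $1 = \ell \leq m \leq L \leq k \leq M \leq K = n$, one shows $y \RAS{*}{S_0} V\ov{V}\,\ov{w_h}$ and $z \RAS{*}{S_0} \ov{w_g}\,U\ov{U}$ for \Gred words $U, V$ built from the two tails, then applies confluence of $S_0$ to the overlapping factor $\widetilde{x}$ (of which $w_g, w_h$ are the two reducts) to find $w$ with $\ov{w_h} \RAS{*}{S_0} w \LAS{*}{S_0} \ov{w_g}$, closing the diamond with one further big step on each branch. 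Your same-centre analysis (shorter redex nested centrally in the longer, one extra big step on the shorter branch) is close in spirit to the paper's $m = M$ case, which proceeds by iteratively cancelling $x_m\ov{x_m}$ until one redex ceases to be infinite and the one-big-rule case applies; but without an argument for the distinct-centre overlap your diamond simply does not close, and this is where the real work of \reflem{lem:confllem} lies.
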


\begin{proof}
We start with the situation 
$$\OUTS{y}{}{\BG}{x}{z},$$
and we have to show that there is some $w$ with 
$$y \RAS{\leq 1}{\BG} w \LAS{\;\;  \leq 1}{\BG} z.$$

This is clear, if we have $\OUTS{y}{*}{S_0}{x}{z},$
because $S_0$ is confluent and several steps using $\RAS{}{S_0}$
yield at most one step in $\RAS{}{\BG}$.

Next, we consider  the following situation 
$$y \LAS{*}{S_0} y_1 \LAS{}{S} y_0 \LAS{*}{S_0} x \RAS{*}{S_0}z.$$
We content to find a $w$ such that 
 $$y_1 \RAS{*}{S_0} w \LAS{\;\;  \leq 1}{\BG}z.$$
Here comes a crucial observation which is used throughout in the
following (compare to the proof of \reflem{lem:otto}). We find factorizations as follows.
\begin{align*}
x &= f_0 x_1f_1 \cdots x_nf_n \\
y_0 &= g_0 x_1g_1 \cdots x_ng_n \\
z &= h_0 x_1h_1 \cdots x_nh_n
\end{align*}
Moreover, all $f_i$ are finite, all $x_i$ are infinite,  and always:
$$\OUTS{g_i}{*}{S_0}{f_i}{h_i}.$$ 
In addition we may assume that $y_0= p u \ov u q$ with 
 $y_1= p q$ and $u$ is an infinite \Gred word. We can shrink
 $u$ by some finite amount and we can make all
 $f_i$ larger and we can split some $x_i$ into factors. As a consequence we may assume 
 the left-hand side $ u \ov u $ covers exactly some
 factor $x_\ell \cdots x_k$ for  
 $1\leq \ell\leq k\leq n$. In particular, we have 
 $$y_1 = g_0 x_1g_1 \cdots x_{\ell-1}g_{\ell-1} 
 g_{k+1}  x_{k+1}x_ng_n.$$
 Since $S_0$ is confluent, it is enough to consider the case  
 $x = x_\ell \cdots x_k$. 
 We may therefore simplify the  notation and 
 we assume the following:
\begin{align*}
x &= x_1f_1 \cdots x_{n-1}f_{n–1}x_n \\
y_0 &= x_1g_1 \cdots x_{n-1}g_{n–1}x_n = u \ov u \\
z &= x_1h_1 \cdots x_{n-1}h_{n–1}x_n \\
y_1 &=1
\end{align*}
We may assume that the middle position between $u$ and $ \ov u$
is inside some factor $g_m$. By making $f_m$ larger we may assume
that $g_m$ has the form $g_m= r_m \ov{r_m}$. But then 
we have $h_m \RAS{*}{S_0} 1$, and hence we may assume
that 
$f_m = g_m = h_m = 1$. 
Refining the partition, making $f_i$ larger, and shrinking $u$ by some 
finite amount, we arrive at the following situation
with $n =2m$ and
\begin{align*}
y_0 &= x_1g_1 \cdots x_{m-1}g_{m–1}x_m 
\;\ov{x_m}\;   \ov{g_{m–1}}\;  \ov{x_{m-1}} \cdots \ov{g_1} \; \ov{x_1} 
\end{align*}

As $u = x_1g_1 \cdots x_{m-1}g_{m–1}x_m$ we see that all $x_i$ are \Gred. 
For each $1\leq i \leq m-1$ we find $r_i$ such that
$g_i \RAS{*}{S_0} r_i$, $ h_i \RAS{*}{S_0} r_i$, and 
$ h_{m+i} \RAS{*}{S_0} \ov{r_{m-i}}$.
As a consequence we may assume 
\begin{align*}
z &= x_1r_1 \cdots x_{m-1}r_{m–1}x_m 
\;\ov{x_m}\;   \ov{r_{m–1}}\;  \ov{x_{m-1}} \cdots \ov{r_1} \; \ov{x_1} 
\end{align*}
Note that it is not clear that 
the word  $x_1r_1 \cdots x_{m-1}r_{m–1}x_m$ is \Gred. 
So we start looking for a finite non-empty factor $h$ with 
$ h \RAS{*}{S_0} 1$. If we find such a factor, we cancel it 
and we cancel the corresponding symmetric factor $\ov h$
on the right side in $\ov{x_m}\;   \ov{r_{m–1}}\;  \ov{x_{m-1}} \cdots \ov{r_1} \; \ov{x_1}$. The factor must use a piece of
some $r_i$ because all $x_i$ are \Gred. 
But it never can use all of some  $r_i$ because
$x_1g_1 \cdots x_{m-1}g_{m–1}x_m$ is \Gred. Thus, the cancellation process stops 
and we can replace $z$ by some word which has the 
form $z= v \ov v$, where $v$ is indeed \Gred. 
Thus, the rewrite step $ z \RAS{}{\BG} 1$ finishes the 
situation $$y \LAS{*}{S_0} y_1 \LAS{}{S} y_0 \LAS{*}{S_0} x \RAS{*}{S_0}z.$$

For later later use we recall that we found some $w$ and derivation
as follows: 
$$y \RAS{*}{S_0} w \LAS{\;\;\leq 1}{ \BG}z .$$

The challenge is now to consider a situation as follows. 
$$y \LAS{*}{S_0} y_1 \LAS{}{S} y_0 \LAS{*}{S_0} x \RAS{*}{S_0}z_0
 \RAS{}{S} z_1 \RAS{*}{S_0}z.$$

We claim that it is enough to find some $w$ with 
$$y_1 \RAS{\leq 1}{\BG} w \LAS{\;\;\leq 1}{ \BG}z_1 .$$

Indeed, if such a $w$ exists, then we have just seen that there are  $w_1$, $w_2$ with
$$y \RAS{\leq 1}{\BG} w_1  \LAS{*}{S_0}w
 \RAS{*}{S_0} w_2 \LAS{\;\;\leq 1}{ \BG}z .$$
 By confluence of $S_0$ there is 
  some $w'$ with
 $$ w_1  \RAS{*}{S_0} w' \LAS{*}{S_0} w_2.$$
 We are done, because now
 $$ y  \RAS{\leq 1}{\BG} w' \LAS{\leq 1}{\BG} z.$$
The claim now implies that we are left with the following case:
$$y  \LAS{}{S} y_0 \LAS{*}{S_0} x \RAS{*}{S_0}z_0
 \RAS{}{S} z.$$

We repeat the assumptions and notations {}from above.
We have \begin{align*}
x &= f_0 x_1f_1 \cdots x_nf_n \\
y_0 &= g_0 x_1g_1 \cdots x_ng_n \\
z_0&= h_0 x_1h_1 \cdots x_nh_n
\end{align*}
All $f_i$ are finite, all $x_i$ are infinite,  and always:
$$\OUTS{g_i}{*}{S_0}{f_i}{h_i}.$$ 
We may assume that $y_0= P u \ov u q$ 
and $z_0= p v \ov v Q$  with 
 $y_1= P q$ and $y_1= pQ $ and $u$ and $v$ are  infinite \Gred words. We can shrink
 $u$ and $v$ by some finite amount and we can make all
 $f_i$ larger and we can split some $x_i$. As a consequence we may assume 
 the left-hand side $ u \ov u $ covers exactly some 
 factor $x_\ell \cdots x_k$ with  
 $1\leq \ell\leq k\leq n$, and 
 the left-hand side $ v \ov v $ covers exactly some
 factor $x_L \cdots x_K$ with 
 $1\leq L \leq K\leq n$.
 We say that $g_i$ is covered by $ u \ov u $, if 
 $\ell\leq i < k $. If $g_i$ is not covered, then we may assume that
 $g_i = f_i$. Analogously, $h_i$ is covered by $ v \ov v $, if 
 $L \leq i < K $. If $h_i$ is not covered, then we may assume that
 $h_i = f_i$. 
 
 We may assume that $\ell \leq L$. If there is no overlap
 between the factors  $ u \ov u $ and  $ v \ov v $, i.e., 
 if $k < L$, then the situation is trivial, because those 
 $g_i$ or $h_i$ which are not covered, are still equal to $f_i$. 
 Thus, we have overlap. Moreover, we may assume that 
 $f_0= f_n = 1$, $\ell = 1$, and $n = \max\smallset{k,K}$. 
 In order to clarify we repeat
 \begin{align*}
x &=  x_1f_1 \cdots x_n \\
y_0 &= x_1g_1 \cdots x_n = u \ov u q\\
z_0&= x_1h_1 \cdots x_n = p v \ov v Q \text{ and either } q= 1 \text{ or } Q=1\\
y &= x_{k+1}f_{k+1} \cdots x_{n-1}f_{n-1}x_n \\
z&= x_1f_1 \cdots x_{L-1}f_{L-1}\, f_{K}x_{K+1}  \cdots f_{n-1}x_n 
\end{align*}
 
 We are coming to a subtle point. As above we may assume that
 the middle position between $u \ov u$ is inside some $g_m$ and
 and that the middle position between $v \ov v$ is inside some $h_M$.
 There are two cases $m = M$ or   $m \neq M$. 
 Let us treat the case  $m = M$, first. 
 
 Given the preference 
 to $u$ we may enlarge $f_m$ such that $g_m= r \ov{r}$. 
 Thus, actually we may assume $g_m=1$. 
 However it is not clear that $h_m$ can be factorized the same way. 
 But $h_m$ is finite and $v$ is infinite, hence, by left-right
 symmetry, we have $h_m = s \ov s h$, where  $\ov s h$
 is a prefix of $\ov v$. Now, in the group $G$ we have 
 $1=g_m=f_m=h_m=h$. Since $h$ is a factor of $\ov v$ and $\ov v$ 
 is \Gred, we conclude that $h= 1$ as a word. This allows to conclude 
 that $f_m= g_m = h_m =1 $ as words. 
  Again, by left-right
 symmetry, we may assume that $\ov x_m$ is a prefix
 of $x_{m+1}$. Thus, both in $y_0$ and in $z_0$ we replace the
 common factors $x_m \ov x_m$ by $1$. Note that this has no influence on $y$ or $z$.  This yields a new assumption 
 about $x$, $y_0$, and $z_0$, we have 
 \begin{align*}
x &=  x_1f_1 \cdots x_{n'}  
\end{align*}
with $n'\leq n$ and a corresponding $m' = M'<m$.
We repeat the procedure. There is only one way the procedure may stop. 
Namely at some point $v$ is not an infinite factor anymore.

Hence, we are back at a situation of type:
$$y \LAS{*}{S_0} y_1 \LAS{}{S} y_0 \LAS{*}{S_0} x \RAS{*}{S_0}z.$$
This situation has already  been solved. 

Hence for the rest of this proof we may assume $m \neq M$. 
This is actually the most 
difficult
part. By making $f_m$ and $f_M$ larger, we may assume
that $g_m = h_M= 1$ as words. Note that for some letter
$a$ we have $x_m = x'a$ and $x_{m+1} = \ov a x''$
Assume that $h_m$ is covered 
by $v \ov v$. Then  $ah_m\ov a$ appears as a non-trivial 
factor in $v \ov v$, where $ah_m\ov a \RAS{*}{S_0}1$. 
Since both $v$ and $ \ov v$ are \Gred, we end up with 
$m=M$, which has been excluded. Thus, $h_m$ is not covered 
by $v \ov v$. We conclude that we may assume $f_m=g_m=h_m= 1$ as words. 
By symmetry, $g_M$ is not covered 
by $u \ov u$ and $f_M=g_M=h_M= 1$ as words. 
In particular we have $k \leq K$. More precisely, we
are  now faced with the following situation: 
$$1= \ell \leq m \leq L \leq k \leq M \leq K=n.$$
Without restriction we can therefore write:
\begin{align*}
x &=  x_1f_1 \cdots x_L f_L \cdots f_{k-1}x_k \cdots f_{n-1}x_n \\
y_0 &= x_1g_1 \cdots x_{k-1}  g_{k-1}x_k  f_{k}x_{k+1} \cdots f_{n-1}x_n = u \ov u y\\
z_0&= x_1f_1 \cdots x_{L-1}  f_{L-1}x_{L}h_L x_{L+1}\cdots h_{n-1}x_n
=  z v \ov v\\
y &= f_{k}x_{k+1} \cdots f_{n-1}x_n \\
z&= x_1f_1 \cdots x_{L-1}f_{L-1}
\end{align*}
 
Consider the \emph{overlapping} factor  $ \widetilde x= x_L f_L \cdots f_{k-1}x_k$
inside the word $x$.
Define new  words $w_g = x_L g_L \cdots g_{k-1}x_k$
and $w_h = x_L h_L \cdots h_{k-1}x_k$.
We claim that there are  \Gred words $U$ and $V$ such that 
\begin{align*}
y & \RAS{*}{S_0}  V \ov V \ov{w_h},\\
z & \RAS{*}{S_0}  \ov{w_g} U \ov U.\\
\end{align*}

By symmetry it is enough to show that $y  \RAS{*}{S_0}  V \ov V \ov{w_h}$. 
Consider 
$$y_0 = x_1g_1 \cdots x_{k-1}  g_{k-1}x_k  f_{k}x_{k+1} \cdots f_{n-1}x_n.$$
Since $f_M=1$ we know that $x_{L}f_L x_{L+1}\cdots f_{M-1}x_M$ reduces 
to the word $v$ and hence $x_{M+1}f_{M+1}\cdots f_{n-1}x_n$ reduces to $\ov v$.
Moreover, we can write $v = w_h V$ with  $$f_k x_{k+1}\cdots f_{M-1}x_M \RAS{*}{S_0} V.$$
As $V$ appears in a factor of $v$ it is \Gred. We obtain the claim:  
$$y= f_k x_{k+1}\cdots f_{M-1}x_M x_{M+1}f_{M+1}\cdots f_{n-1}x_n \RAS{*}{S_0} V \ov V \ov{w_h}.$$

Since $S_0$ is confluent and $w_g \LAS{*}{S_0} \widetilde x \RAS{*}{S_0} w_h$, we find  $w$
such that 
$$\ov{w_h}  \RAS{*}{S_0} w  \LAS{*}{S_0} \ov{w_g}$$
Hence: 
$$y \RAS{\leq 1}{\BG} w \LAS{\,\, \leq 1}{\BG} z.$$
 
This shows that the system $S$ in Equation~\ref{eq:hugo}
is confluent. This finishes the proof of the lemma and therefore of Theorem~\ref{thm:confl}, too.
\end{proof}

\begin{corollary}\label{cor:embedG}
The canonical homomorphism $G\to \Ext(A,G)$ is an embedding.
\end{corollary}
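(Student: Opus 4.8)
The plan is to derive the embedding directly from the confluence of $S$ proved in \refthm{thm:confl}, the only genuinely new ingredient being that finite words admit only $S_0$-reductions. First I would record why $\pi^A$ exists at all: a single letter $a\in\GG$ is trivially \Gred, since a one-point interval carries no finite factor $x[\alpha,\alpha+n]$ with $n\geq 1$, so $\GG\sse R(A,G)$, whence $\GG^*\sse R^*(A,G)$ and the image of $\GG^*$ in $\MM(A,G)$ falls inside $\Ext(A,G)$; because $G=\GG^*/S_0$ with $S_0\sse S$, the inclusion $\GG\sse G$ does induce $\pi^A\colon G\to\Ext(A,G)$. To prove it an embedding it suffices to show $\ker\pi^A=1$.

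Next I would take $g\in\ker\pi^A$, pick a finite word $\ell\in\GG^*$ with $\pi(\ell)=g$, and reduce the goal to $\ell\DAS{*}{S_0}1$, i.e.\ $\pi(\ell)=1$ in $G=\GG^*/S_0$. By assumption $\ell=1$ holds in $\MM(A,G)=W(A,\GG)/S$, that is $\ell\DAS{*}{S}1$. Since $S$ is confluent by \refthm{thm:confl}, it is Church--Rosser by \refprop{frida}, so there is some $w$ with $\ell\RAS{*}{S}w\LAS{*}{S}1$. The empty word is $S$-irreducible: it is $S_0$-irreducible by condition~4, and no rule $u\ov u\to 1$ can fire on it, its left-hand side being infinite; hence $w=1$ and the zig-zag collapses to a directed reduction $\ell\RAS{*}{S}1$.

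The crux is then the observation that a reduction $\RAS{*}{S}$ starting at a finite word never invokes an infinite rule $u\ov u\to 1$. Indeed, each rule of $S_0$ lies in $\GG^*\times\GG^*$ and so sends finite words to finite words, whereas a rule $u\ov u\to 1$ with $u$ infinite requires the infinite factor $u\ov u$ to be present, which cannot occur in a finite word. By induction on the length of the derivation $\ell\RAS{*}{S}1$, every intermediate word is finite and every step lies in $S_0$; hence $\ell\RAS{*}{S_0}1$, so $g=\pi(\ell)=1$ and $\pi^A$ is injective, as required.

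I expect the only care needed to be in this last finiteness step, and it is entirely elementary; all the substance is absorbed into \refthm{thm:confl}, which I am free to use, so the corollary is essentially a bookkeeping consequence of confluence. In fact the same reasoning yields a little more: for finite words $\ell,r\in\GG^*$ one has $\ell=r$ in $\MM(A,G)$ \IFF $\pi(\ell)=\pi(r)$, so that $\MM(A,G)$ restricts on $\GG^*$ to exactly the congruence defining $G$ — perhaps the cleaner statement to carry out in full.
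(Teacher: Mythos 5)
Your proof is correct and takes essentially the same route as the paper's: from $x\DAS{*}{S}y$ for finite words, confluence of $S$ (\refthm{thm:confl}, via Church--Rosser) yields a common reduct, and since a finite word can contain no infinite factor $u\ov u$, every step of the resulting derivations lies in $S_0$, giving equality in $G$. The paper argues with two finite words $x=y$ in $\Ext(A,G)$ rather than via triviality of the kernel, which is exactly the stronger congruence statement in your final remark, and it leaves the finiteness observation implicit where you spell it out.
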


\begin{proof}
 Let $x,y\in \GG^*$ be finite words such that $x=y$ in $\Ext(A,G)$.
 Then we have $\INS x*Swy$ for some $w\in \GG^*$. But this implies 
 $\INS x*{S_0}wy$. Hence $x=y$ in $G$.
\end{proof}
 
 \begin{corollary}\label{covergentS}
Let $S_0$ be a convergent system defining the group $G$. 
The canonical mapping  $$\IRR(S_0) \cap R(A,G) \to \Ext(A,G)$$ is injective.
\end{corollary}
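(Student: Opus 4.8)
The plan is to combine the confluence of the big system $S$ (Theorem~\ref{thm:confl}) with the convergence of $S_0$ to upgrade the injectivity statement of \refcor{cor:embedG} to the claimed statement about words in $\IRR(S_0) \cap R(A,G)$. First I would fix two closed words $x, y \in \IRR(S_0) \cap R(A,G)$ with $x = y$ in $\Ext(A,G)$, and recall that equality in $\Ext(A,G)$ means exactly $x \DAS{*}{S} y$. By Theorem~\ref{thm:confl} the system $S$ is confluent, so there is a common descendant $w$ with $\INS{x}{*}{S}{w}{y}$. The goal is to show that both derivations must in fact be $S_0$-derivations, and then to leverage convergence of $S_0$ to force $x = y$ as words.

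The central step is to show that a \Gred word cannot apply any \emph{Big} rule. This is precisely the content of \reflem{lem:otto}: since $x$ is a non-empty \Gred word, $x \RAS{*}{S} w$ implies $x \RAS{*}{S_0} w$ (and the same for $y$). Thus I would invoke \reflem{lem:otto} twice to replace the two $S$-derivations $x \RAS{*}{S} w \LAS{*}{S} y$ by $S_0$-derivations $x \RAS{*}{S_0} w \LAS{*}{S_0} y$. At this point the problem has been pushed entirely inside $\GG^* \times \GG^*$, where $S_0$ governs everything. Note that \reflem{lem:otto} also guarantees $w$ is a non-empty word, so no degenerate case arises; and since $w$ is obtained from $x$ by $S_0$-rewriting it lies in $\GG^*$, i.e.\ $w$ is finite.

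Now I would apply convergence of $S_0$. Both $x$ and $y$ are $S_0$-irreducible by hypothesis, and each reduces to $w$ via $S_0$. Since a convergent system has unique normal forms, the $S_0$-normal form of $x$ is $x$ itself, and likewise for $y$; but $w$ is a common $S_0$-descendant, so reducing $w$ to its $S_0$-normal form yields a single word that must coincide with both $x$ and $y$. Concretely, $x \RAS{*}{S_0} w$ together with $x \in \IRR(S_0)$ forces $x = w$ as words, and symmetrically $y = w$, whence $x = y$ as words and in particular the canonical map is injective.

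The main obstacle, and the reason the corollary is not completely immediate, is the reduction from an arbitrary $S$-derivation to a pure $S_0$-derivation; everything hinges on the fact that a \Gred word admits no application of a Big rule. This is exactly what \reflem{lem:otto} supplies, but one must be careful that it applies to \emph{both} the forward derivation from $x$ and the backward derivation from $y$ (reading $y \RAS{*}{S} w$ and using that $y$ is also \Gred). Once the Big rules are eliminated, the argument is the standard unique-normal-form property of convergent systems and involves no further subtlety.
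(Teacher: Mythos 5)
Your proof is correct and takes essentially the same route as the paper, which deduces injectivity of $\IRR(S) \to \Ext(A,G)$ from the Church-Rosser property of $S$ (\refthm{thm:confl}) and then applies \reflem{lem:otto} to get $\IRR(S) \cap R(A,G) = \IRR(S_0) \cap R(A,G)$ --- exactly your common-descendant argument in condensed form. One inessential slip: your parenthetical claim that $w$ lies in $\GG^*$ is false in general, since $x \in \IRR(S_0) \cap R(A,G)$ may be an infinite word and $S_0$-rewriting only modifies finite factors, but nothing is lost because $S_0$-irreducibility of $x$ and $y$ alone forces $x = w = y$ regardless of finiteness.
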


\begin{proof}
 Since the system $S$ is confluent (hence Church-Rosser), the canonical mapping  $\IRR(S)  \to \Ext(A,G)$ is injective.
 The result follows, because  Lemma~\ref{lem:otto} tells us $\IRR(S) \cap R(A,G) = \IRR(S_0) \cap R(A,G)$. 
 \end{proof}

The following special case is used in \refsec{sec:torsion}. 
 \begin{corollary}\label{freeredS}
Let  $G = F(\SS)$ be a free group. Then pairwise different freely reduced closed 
words are mapped to pairwise different elements in 
$\Ext(A,G)$.
\end{corollary}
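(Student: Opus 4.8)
The plan is to derive the statement directly from \refcor{covergentS}. For the free group $G=F(\SS)$ I would choose as the convergent system $S_0$ the standard free-group system
$$S_0=\set{aa^{-1}\ra 1,\; a^{-1}a\ra 1}{a\in\SS}$$
of Example~\ref{freegroup}. First I would check that this is a legitimate choice: by Example~\ref{freegroup} it is strongly confluent and terminating, hence convergent by parts~1 and~3 of \refprop{frida} together with the definition of convergence; and it plainly satisfies conditions~1--5 required in \refsec{sec:csoverdoag}. Indeed it presents $F(\SS)$; it contains every rule $a\ov a\ra 1$ for $a\in\GG$; it is closed under the involution since $\ov{aa^{-1}}=aa^{-1}$ and $\ov{a^{-1}a}=a^{-1}a$; the empty word $1$ is irreducible because all left-hand sides have length $2$; and it is confluent. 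Thus \refcor{covergentS} applies, and the canonical mapping $\IRR(S_0)\cap R(A,G)\to\Ext(A,G)$ is injective.

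It then remains to identify the domain $\IRR(S_0)\cap R(A,G)$ with the set of freely reduced closed words. On the one hand, a closed word $x\in W(A,\GG)$ lies in $\IRR(S_0)$ exactly when no finite factor of $x$ equals a left-hand side of $S_0$; since the only left-hand sides are $aa^{-1}$ and $a^{-1}a$, this says precisely that $x(\bet)\neq\ov{x(\bet+1)}$ for all consecutive positions, i.e. that $x$ is freely reduced. On the other hand, as recorded earlier (over a free group a word is freely reduced \IFF it is \Gred), the set $R(A,G)$ is exactly the set of freely reduced closed words. Consequently $\IRR(S_0)\cap R(A,G)$ is itself the set of freely reduced closed words, and each such word lies in $R(A,G)\sse R^*(A,G)$, so its image indeed lands in $\Ext(A,G)$. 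The injectivity furnished by \refcor{covergentS} is therefore exactly the assertion that pairwise different freely reduced closed words are mapped to pairwise different elements of $\Ext(A,G)$.

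I do not expect a genuine obstacle here: the substance is entirely contained in \refcor{covergentS}, which in turn rests on the confluence result \refthm{thm:confl} and on \reflem{lem:otto}. The only points needing care are bookkeeping ones, namely verifying that the chosen $S_0$ meets the structural hypotheses of the construction and is convergent, and matching up the three notions ``$S_0$-irreducible'', ``freely reduced'', and ``\Gred'' in the free case. Both are immediate from the definitions and from the remarks made earlier on words over a free group.
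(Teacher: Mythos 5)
Your proof is correct and takes essentially the same route as the paper's own argument: the paper likewise chooses the trivial system $S_0$ consisting of the rules $a\ov a \ra 1$ for $a \in \GG$, observes that it is convergent with $\IRR(S_0) = R(A,G)$ equal to the set of freely reduced closed words, and concludes by \refcor{covergentS}. The additional bookkeeping you carry out (verifying conditions 1--5 and matching up ``$S_0$-irreducible'', ``freely reduced'', and ``\Gred'') is exactly what the paper leaves implicit.
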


\begin{proof}
 For $G = F(\SS)$ we can choose $S_0$ to contain just the trivial rules
 $a\ov a \ra 1$, where $a \in \GG = \SS \cup \SS^{-1}$. The system 
 is convergent and $$\IRR(S_0) =  R(A,G)= 
 \set {u \in W(A,G)}{u \mbox{ is freely reduced}}.$$
 The result follows by Corollary~\ref{covergentS}.
 \end{proof}


\begin{example}\label{ex:conj}
Let $a\in \SS$ and
$u,v \in F(\SS)$ be represented
by non-empty cyclically reduced words in $\GG^*$. (For example $u,v$ are themselves letters.) Consider the following infinite  closed words:
\begin{align*}
w=[uuu\cdots\;)&(\;\cdots vvv]\\
z=[uuu\cdots\;)(\;\cdots aaa ]&[\ov{a}\ov{a}\ov{a}\cdots\;)(\;\cdots \ov{v}\ov{v}\ov{v}]
\end{align*}
The word $w$ is freely reduced, hence irreducible w.r.t.{} the system $S_0$. The word
$z$ is not freely reduced and $S_0$ is not terminating on $z$.

By \refcor{covergentS} we have  $uw = wv$ in $\Ext(A,G)$ \IFF $uw = wv$ in $W(A,G)$ $\abs{u} = \abs{v}$. 

Although the word $z$ has no well-defined 
length one can infer the same conclusion. First let $\abs{u} = \abs{v}$, then 
$z = u z \ov v$ in $\Ext(A,G)$ and hence $uz = zv$. For the other direction 
write $z = z' \ov v$ as words and 
let $uz = zv = z'$ in $\Ext(A,G)$. Then $uz \RAS{*}{S} \widetilde z \LAS{*}{S} z'$
for some word $\widetilde z$.  

After cancellation 
of factors $a^m\ov a ^m$ inside  $(\;\cdots aaa ]\cdot[\ov{a}\ov{a}\ov{a}\cdots\;)$ the borderline between $a$'s and $\ov a$'s must match inside $\widetilde z$. So exactly
$\abs{u}$ more cancellations of type $a\ov a\ra 1$ inside $uz$ took place than 
in $z'$. Hence $\abs u = \abs v$. The other direction is trivial. 
\end{example}

 For each ordinal $d \in \Omega$ let 
 $$\G_d= \set{x \in \Ext(A,G)}{x \,\mbox{ is given by some word of degree at most } \, d}$$
 Corollary~\ref{cor:embedG} has an obvious generalization. The proof is by transfinite induction and left to the interested reader. 
 
\begin{corollary}\label{cor:embedGd}
Let $d\leq e  \in \Omega$.
Then the canonical homomorphism $\G_d \to \G_e$ is an embedding.
\end{corollary}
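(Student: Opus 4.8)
The plan is to deduce the statement from the Church--Rosser property of the confluent system $S$ (\refthm{thm:confl}, via \refprop{frida}) together with a degree-mono\-to\-nicity observation, so that the argument becomes a direct generalization of the proof of \refcor{cor:embedG}. Write $A_{\leq d} = \oplus_{i \leq d} \gen{t_i}$; this is again a discretely ordered abelian group of exactly the form to which all of \refsec{sec:csoverdoag} applies, so it carries its own confluent system, call it $S^{(d)}$, over $W(A_{\leq d},\GG)$, and $\G_d$ is canonically identified with $\Ext(A_{\leq d},G) = R^*(A_{\leq d},G)$ inside $W(A_{\leq d},\GG)/S^{(d)}$. Under the inclusion $A_{\leq d} \hookrightarrow A_{\leq e}$ a closed word of degree $\leq d$ is literally a closed word over $A_{\leq e}$, and the real content of the corollary is that passing to the bigger system $S^{(e)}$ collapses no two words of degree $\leq d$ beyond what $S^{(d)}$ already does; this is precisely injectivity of $\G_d \to \G_e$. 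So I would take closed words $x,y$ of degree $\leq d$ with $x = y$ in $\G_e$, and by Church--Rosser for $S^{(e)}$ fix a word $w$ with $x \RAS{*}{S^{(e)}} w \LAS{*}{S^{(e)}} y$.

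The key step is to show that this entire derivation never leaves degree $\leq d$ and uses only rules of $S^{(d)}$. First I would record the elementary fact that a factor $f$ of a closed word $z$ satisfies $\abs f \leq \abs z$: writing $z = pfq$ gives $\abs z = \abs p + \abs f + \abs q$ with $\abs p, \abs q \geq 0$, and since $\deg$ is monotone on non-negative elements of $A$ this yields $\deg(f) \leq \deg(z)$. Consequently, if a big rule $v\ov v \rightarrow 1$ applies to a word $z$ of degree $\leq d$, then $v\ov v$ is a factor of $z$, so $\deg(v) \leq \deg(v\ov v) \leq \deg(z) \leq d$ and hence $v \in R(A_{\leq d},G)$; moreover the rule deletes a factor, so the result again has degree $\leq d$. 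The rules of $S_0$ only rewrite finite factors and change the length by an element of $\gen{t_0}$, hence preserve degree $\leq d$ as well (recall that $0$ is the least ordinal in $\Omega$, so $d \geq 0$). Thus both kinds of steps keep us within degree $\leq d$, and every big step used is a step of $S^{(d)}$.

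It then follows by induction on the lengths of the two derivations that every word occurring in $x \RAS{*}{S^{(e)}} w$ and $y \RAS{*}{S^{(e)}} w$ has degree $\leq d$ and that every rewrite step lies in $S^{(d)}$; hence in fact $x \RAS{*}{S^{(d)}} w \LAS{*}{S^{(d)}} y$, so $x = y$ already in $\G_d$, giving the injectivity. The statement for general $d \leq e$ follows at once from this uniform argument, or one may organize it as a transfinite induction on $e$: the successor case is the one-step inclusion $\G_d \hookrightarrow \G_{d+1}$ just treated, at a limit ordinal $\lambda$ one uses $\G_\lambda = \bigcup_{d<\lambda}\G_d$ and the fact that a directed union of embeddings is an embedding, and the base case is \refcor{cor:embedG} itself. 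The only point requiring care---and the natural place for a careless argument to fail---is the claim that no rewrite step can raise the degree above $d$; this is exactly what the factor-length bound $\abs f \leq \abs z$ secures, and it is the crux of the whole proof.
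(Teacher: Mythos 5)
Your main argument is correct, and it is essentially the proof the paper intends: the paper states only that the proof is ``by transfinite induction and left to the interested reader,'' and what you write is the natural level-$d$ generalization of the proof of \refcor{cor:embedG}, whose key implication there (a derivation between finite words cannot invoke a big rule) is exactly your degree-monotonicity observation in the case $d=0$ (compare \reflem{lem:otto}). The crux is sound: lengths of closed words are non-negative and add under concatenation, and for $0\leq\alpha\leq\beta$ in $A$ one has $\deg(\alpha)\leq\deg(\beta)$ because leading coefficients of non-negative elements cannot cancel; hence a factor of a word of degree $\leq d$ again has degree $\leq d$, so a big rule $v\ov v\rightarrow 1$ applicable to such a word forces $\deg(v)\leq d$, i.e., it is a rule of your $S^{(d)}$, while $S_0$-steps alter lengths only by integers. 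Together with Church--Rosser for $S^{(e)}$ (\refthm{thm:confl} applies verbatim to $A_{\leq e}$, which is again of the admissible form $\oplus\gen{t_i}$), this gives $x \DAS{*}{S^{(d)}} y$ and hence injectivity, uniformly in $d\leq e$.

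One aside is wrong, though dispensable. At a limit ordinal $\lambda$ it is \emph{not} true that $\G_\lambda=\bigcup_{d<\lambda}\G_d$: the group $A_{\leq\lambda}$ contains $t_\lambda$, and by \reflem{lem:totallang} there exist \Gred words of degree exactly $\lambda$, which by the argument of \refprop{prop:huge} are not equal in the extension to any word of smaller degree. So the transfinite-induction organization you sketch would break at limit stages if it leaned on that union. Fortunately your uniform argument makes induction on $e$ superfluous: since a derivation starting from words of degree $\leq d$ never leaves degree $\leq d$, limit ordinals require no separate treatment. Delete the transfinite-induction remark (or repair its limit step by the same degree bound) and the proof stands as a complete substitute for the proof the paper omits.
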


The group $\Ext(A,G)$ is the union of all $\G_d$, but if $G$ is finite
nothing interesting happens, we have $G = \Ext(A,G)$ in this case because
there are no infinite \Gred words. However if $G$ is infinite, then 
$\Ext(A,G)$ may become huge due to the following observation. 

\begin{proposition}\label{prop:huge}
Let 
$A$ have rank at least 2.
Then the  following assertions are equivalent:
\begin{enumerate}[i.)]
\item The group $G$ is infinite. 
\item  For all $d < e  \in \Omega$ we have $\G_d \neq \G_e$.
\end{enumerate}
\end{proposition}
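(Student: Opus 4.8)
The plan is to prove the two implications separately, leaning on the confluence machinery of \refsec{sec:csoverdoag}. The crux is a \emph{degree-monotonicity} principle: no rule of $S$ raises the degree of a word. Indeed, a rule of $S_0$ replaces a finite factor $\ell$ by $r$ with $\ell,r\in\GG^*$, hence alters the length only by the finite, i.e.\ $t_0$-valued, amount $|r|-|\ell|\in\Z$; and a big rule $u\ov u\to 1$ strictly shortens the length. Since on positive elements $\alpha\mapsto\deg(\alpha)$ is order-preserving (if $0<\alpha\leq\beta$ and $\deg(\alpha)>\deg(\beta)$ then the leading term of $\alpha-\beta$ would be positive, forcing $\alpha>\beta$), neither kind of step can increase the degree; moreover an $S_0$-step leaves the degree unchanged as soon as that degree is at least $1$, because then the leading coefficient sits above the $t_0$-component.

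For the direction $\neg(\text{i.})\Rightarrow\neg(\text{ii.})$ I would argue directly. If $G$ is finite, then by \refrem{rem:uninteressant} there are no infinite \Gred words, so $R^*(A,G)=\GG^*$ and $\Ext(A,G)=G$. Consequently every $\G_d$ equals the image of $G$, whence $\G_d=\G_e$ for all $d,e\in\Omega$. As $A$ has rank at least $2$, there exist $d<e$ in $\Omega$, and for them $\G_d=\G_e$, which contradicts (ii.). This settles the equivalence once the forward implication is shown.

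For the main direction (i.)$\Rightarrow$(ii.), fix $d<e$ in $\Omega$; since $0$ is the least ordinal we have $e\geq 1$. Using \reflem{lem:totallang}, available because $G$ is infinite, I would choose a \Gred word $x\colon[1,t_e]\to\GG$ of degree $e$, so that $x\in\G_e$. It then suffices to show $x\notin\G_d$. Suppose to the contrary that $x=y$ in $\Ext(A,G)$ for some word $y$ with $\deg(y)\leq d$. By \refthm{thm:confl} the system $S$ is confluent, hence Church--Rosser, so there is $w$ with $x\RAS{*}{S}w\LAS{*}{S}y$. Because $x$ is a non-empty \Gred word, \reflem{lem:otto} forces $x\RAS{*}{S_0}w$; by the monotonicity principle (with $e\geq 1$) this yields $\deg(w)=\deg(x)=e$. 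On the other hand, $y\RAS{*}{S}w$ together with the same principle gives $\deg(w)\leq\deg(y)\leq d<e$, a contradiction. Hence $\G_d\subsetneq\G_e$, in particular $\G_d\neq\G_e$, and since $d<e$ were arbitrary, (ii.) holds.

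The main obstacle I anticipate is precisely the degree-monotonicity claim: one has to verify carefully that $\alpha\mapsto\deg(\alpha)$ is order-preserving on positive lengths and that $S_0$-steps perturb only the $t_0$-component, so that combining Church--Rosser with \reflem{lem:otto} legitimately pins down $\deg(w)=e$ on the one side while forcing $\deg(w)\leq d$ on the other. Everything else—the choice of $x$, and the collapse argument for finite $G$—is routine bookkeeping built on results already established.
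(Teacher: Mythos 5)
Your proof is correct and takes essentially the same route as the paper: a \Gred word of degree $e$ supplied by \reflem{lem:totallang}, confluence of $S$ (\refthm{thm:confl}) to obtain a common reduct $w$, \reflem{lem:otto} to force the reduction $x\RAS{*}{S}w$ to use only $S_0$-rules, and the fact that $S_0$-steps cannot change a degree $\geq 1$ to reach the contradiction, with the finite-$G$ direction handled by the collapse $\Ext(A,G)=G$ noted before the proposition. The only difference is cosmetic: you make the degree-monotonicity of $\RAS{*}{S}$ explicit (the paper leaves it implicit here, stating it only later in \refsec{sec:wp}), which is a welcome but inessential elaboration.
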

\begin{proof}
We have  $\abs{\Omega} \geq 2$.
Let $d < e  \in \Omega$ with $\G_d=  \G_e$. We show that
$G$ is finite. Assume the contrary, then by Lemma~\ref{lem:totallang}
there is some \Gred word $x$ of degree $e$. Assume we find a word 
$z$ of degree at most $d$ such that $x \DAS *S z$. Then, be confluence of $S$
we have $\INS x*S yz$
for some $y$ of degree at most $d$. But now Lemma~\ref{lem:otto} tells us that 
$x \RAS *{S_0} y$, which implies that $x$ is of degree $d$, too. 
This is a contradiction, because rules from $S_0$ cannot decrease any degree other than 0. 
\end{proof}

The notion  of a pre-perfect system  from Definition~\ref{def:preperfect}
can be applied to rewriting systems over $W(A,\GG)$, too. In this case Theorem~\ref{thm:confl} implies the following result.

\begin{corollary}\label{rem:preperfecttrans}
If the group $G$ 
is defined by some pre-perfect string rewriting system $S_0$, then 
 the system $S$ on $W(A,\GG)$ is also pre-perfect.
\end{corollary}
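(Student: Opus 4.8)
The plan is to verify directly the three conditions of \refdef{def:preperfect} for the system $S$ of Equation~\ref{eq:hugo}, now reading the length $\abs{\cdot}$ as the $A$-valued length function on $W(A,\GG)$ and $\geq$ as the order on $A$. Recall that this length is additive under concatenation, so $\abs{\cdot}\colon W(A,\GG)\to A$ is a monoid homomorphism, and that $\abs{\ov u}=\abs u$ and $\abs{1}=0$.

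First I would dispose of confluence (condition~1): this is precisely \refthm{thm:confl}, which is the only substantial ingredient and is already proved. What remains are the two purely quantitative conditions, and for these I would treat the two families of rules in $S = S_0 \cup \set{u\ov u \to 1}{u \in R(A,G),\ u \text{ infinite}}$ separately. For a rule $\ell \to r \in S_0$ the words $\ell,r$ are finite, so $\abs{\ell},\abs{r}\in\N$, and pre-perfectness of $S_0$ gives $\abs{\ell}\geq\abs{r}$ in $\N$. Since $\Z$ embeds into $A$ as an ordered subgroup via $n\mapsto n t_0$ (so that $1\in\N$ is the least positive element of $A$), this inequality lifts unchanged to $\abs{\ell}\geq\abs{r}$ in $A$, giving condition~2 for these rules; and if $\abs{\ell}=\abs{r}$, then again by pre-perfectness of $S_0$ the reversed rule $r\to\ell$ lies in $S_0\subseteq S$, giving condition~3. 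For a big rule $u\ov u\to 1$ with $u$ infinite I would simply compute $\abs{u\ov u}=\abs u+\abs{\ov u}=2\abs u$; as $u$ is infinite its length is positive, so $\abs{u\ov u}=2\abs u>0=\abs{1}$. Thus such a rule is strictly length-decreasing, which settles condition~2, while $\abs{\ell}\neq\abs{r}$ makes condition~3 vacuous for it. Together these checks establish that $S$ is pre-perfect.

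I expect no genuine obstacle here: the hard content (confluence) is imported wholesale from \refthm{thm:confl}, and everything else is bookkeeping. The only point demanding minor care is that the length function is now $A$-valued rather than $\N$-valued, so one must confirm that the two defining inequalities of pre-perfectness survive the passage from $\N$ to $A$ — which they do, precisely because $\N$ sits inside $A$ order-preservingly at degree $0$, and because the involution preserves length so that the big rules strictly shorten any infinite left-hand side.
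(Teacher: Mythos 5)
Your proof is correct and is essentially the paper's own argument: the paper gives no separate proof text, stating only that \refthm{thm:confl} implies the corollary, with the understanding that confluence is the sole substantive condition and the two length conditions are routine. Your explicit verification — inheriting conditions 2 and 3 for the $S_0$-rules via the order embedding $\N\hookrightarrow A$, $n\mapsto nt_0$, and observing that each big rule $u\ov u\to 1$ is strictly length-decreasing since $\abs{u\ov u}=2\abs u>0$, so condition 3 is vacuous for it — is exactly the intended bookkeeping.
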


\begin{definition}\label{def:short}
A word  $x \in W(A,\GG)$ is called a  \emph{local geodesic}, if it has  no finite factor $f$ such that $f=g$ in $G$ and $\abs{g}<\abs{f}$.
\end{definition}

\begin{proposition}\label{prop:shortest}
Let $G$ be presented by some pre-perfect string rewriting system $S_0\subseteq \GG^* \times \GG^*$. 
Let $x \in W(A,\GG)$ be a local geodesic. Then 
$x\RAS *S y$ implies both $x\RAS *{S_0} y$ and $\abs{x}= \abs{y}$. 
\end{proposition}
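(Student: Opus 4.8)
The plan is to reduce both conclusions to \reflem{lem:otto} together with the pre-perfect length bound, and the first thing I would record is that a local geodesic is automatically \Gred. Indeed, if some finite factor $f = x[\alpha,\alpha+n]$ with $n\geq 1$ satisfied $\pi(f)=1$ in $G$, then taking $g$ to be the empty word we would have $f=g$ in $G$ with $\abs{g}=0<\abs{f}$, contradicting the local geodesic hypothesis. Hence a non-empty local geodesic $x$ lies in $R(A,G)$, and \reflem{lem:otto} applies verbatim to give $x\RAS *{S_0}y$; the empty case is trivial since $1$ is $S$-irreducible (no rule of $S_0$ rewrites $1$, and the left-hand side of every Big rule is an infinite word). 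This already settles the first of the two conclusions, and — crucially — it is the step that discards the infinite rules $u\ov u\to 1$ from the derivation, leaving a derivation that uses only rules of $S_0$.

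It remains to show $\abs{x}=\abs{y}$, and this is where the assumption that $S_0$ is pre-perfect is used. Starting from the derivation $x\RAS *{S_0}y$ just obtained, I would factorize it exactly as in the proofs of \reflem{lem:otto} and \reflem{lem:confllem}: since each $S_0$-step rewrites a finite interval and there are only finitely many steps, after enlarging the finite regions and merging those at finite distance one may write $x$ as an alternating product
\[
x = f_0\,x_1 f_1 \cdots x_n f_n, \qquad y = g_0\,x_1 g_1 \cdots x_n g_n,
\]
where the $x_i$ are infinite and untouched, the $f_i,g_i$ are finite, and $f_i \RAS *{S_0} g_i$ for each $i$. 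Each $f_i$ is then a genuine finite factor of $x$, and $\pi(f_i)=\pi(g_i)$ in $G$ because $\GG^*/S_0=G$.

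The length bookkeeping is now immediate. By the pre-perfect condition $\abs{\ell}\geq\abs{r}$ for every rule of $S_0$, each step of $f_i\RAS *{S_0}g_i$ is length-non-increasing, so $\abs{g_i}\leq\abs{f_i}$ and
\[
\abs{y} = \abs{x} + \sum_i\bigl(\abs{g_i}-\abs{f_i}\bigr) \leq \abs{x}.
\]
Were this strict, some index $i$ would satisfy $\abs{g_i}<\abs{f_i}$; but then $f_i$ is a finite factor of $x$ with $f_i=g_i$ in $G$ and $\abs{g_i}<\abs{f_i}$, directly contradicting the assumption that $x$ is a local geodesic. Hence $\abs{g_i}=\abs{f_i}$ for all $i$ and $\abs{x}=\abs{y}$, which finishes the argument. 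I expect no genuine obstacle beyond setting up the factorization so that every finite region touched during the $S_0$-derivation is absorbed into a single finite factor $f_i$ lying between untouched infinite factors; once that bookkeeping is in place the contradiction with the local geodesic property is one line. Note that only the length-monotonicity of $S_0$ (rule~2 of \refdef{def:preperfect}) enters the length equality — the symmetry condition~3 plays no role here.
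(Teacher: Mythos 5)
Your proof is correct and takes essentially the same route as the paper, whose entire argument is the one-line observation that local geodesics are \Gred, so that \reflem{lem:otto} applies verbatim, with the length equality declared straightforward. Your corridor factorization $x = f_0 x_1 f_1 \cdots x_n f_n$, $y = g_0 x_1 g_1 \cdots x_n g_n$ simply makes explicit the bookkeeping the paper leaves implicit: each $\abs{g_i}\leq\abs{f_i}$ by the pre-perfect bound, and strict inequality would contradict local geodesicity of $x$.
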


\begin{proof}
Straightforward {}from Lemma~\ref{lem:otto} since local geodesics are \Gred. 
 \end{proof}

\section{Torsion elements in $\Ext(A,G)$ and cyclic decompositions}\label{sec:torsion}
This section can be skipped if the reader is interested in the \WP of $\Ext(A,G)$, only. 
We consider an  infinite group $G$ and we assume that $A$ is \nonarch, i.e., $A$ has rank at least 2.
We show that $\Ext(A,G)$ is never  torsion free. More precisely,
$\Ext(A,G)$ has always elements of order 2. Actually, often these elements generate 
$\Ext(A,G)$, see \refprop{gunnar}. Torsion elements which are not conjugated 
to torsion elements in $G$ can be represented as infinite  fixed points 
of the involution, i.e.,  by infinite closed words $x$ satisfying  $x = \ov x$, see 
\refprop{torsion}. In particular, all "new" torsion elements have order 2. 

According to Lemma~\ref{lem:totallang} there exists a (non-closed) partial 
word $p: \N \to \GG$, which is \Gred. This defines 
a closed word $[p)(\;\ov{p}]$ for each length $(m,1)$.
More formally, for $m \in \Z$ define 
\[
\begin{array}{rclll}
 w_m:[(0,0),(m,1)]&\to &\GG\\
 (n,0) &\mapsto& p(n) &\mbox{for }  n \geq 0 \\
 (n,1) &\mapsto& \ov {p(m-n)} &\mbox{for } n \leq m
 \end{array}
 \]
We have $\ov{w_m} = w_m$ and hence $w_m^2 = 1$ in $\Ext(A,G)$. 
By Theorem~\ref{thm:confl}  the element $w_m$ is not trivial, 
hence 
$w_m$ has order 2. 

In order to make the reasoning more transparent, 
assume that $G= F(\SS)$ is free. Then 
for $a\in \SS$ we may consider closed words  $w_m= [aaa\cdots\;)(\;\cdots \ov{a}\ov{a}\ov{a}] \in W(A,F(\SS)$. These words are pairwise different and
freely reduced. By \refcor{freeredS} reading  $w_m \in \Ext((A,F(\SS))$, these elements are still non-trivial,
pairwise different, and of order  2.

We have seen that $\Ext(A,G)$ contains infinitely many elements of order 2. Actually, frequently these elements generate $\Ext(A,G)$.

\begin{proposition}\label{gunnar}
Let $G= F(\SS)$
and $\abs \SS \geq 2$. Assume that $\Omega$ is a limit ordinal, 
that is for each $d \in \Omega$, we have $d +1  \in \Omega$, too. 
Then $\Ext(A,G)$ is generated by elements of order 2. 
\end{proposition}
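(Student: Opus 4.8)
The plan is to show that every element of $\Ext(A,G)$ can be written as a product of order-2 elements of the form $w_m$ constructed just above the proposition. We already know from the discussion preceding the statement that for each $m \in \Z$ the word $w_m = [aaa\cdots)(\cdots \ov a \ov a \ov a]$ satisfies $\ov{w_m} = w_m$, hence has order 2 in $\Ext(A,G)$. The key idea is that products of two such involutions should generate enough ``shifts'' and ``patterns'' to build up an arbitrary closed \Gred word; since $\Ext(A,G)$ is generated by $R(A,G)$ (the \Gred closed words) together with the finite words $\GG^*$, it suffices to express each generator as a product of involutions.

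First I would handle the finite generators. For each letter $a \in \SS$, I want $a$ itself to be a product of two involutions. The natural move is to use that $\Ext(A,F(\SS))$ already contains many involutions: if $x$ is any closed word with $\ov x = x$, and $a x$ is again such a fixed point (or close to one), then $a = (ax)\cdot \ov x = (ax) x$ exhibits $a$ as a product of two order-2 elements. Concretely, with $\abs\SS \geq 2$ I can pick a second letter $b \neq a, \ov a$ and build involutions of the form $w = [b\cdots)(\cdots \ov b]$ and then $aw$ or $wa$-type words, arranging that both factors are fixed by the involution. The reason $\abs\SS \geq 2$ is needed is exactly this: to have room to write $a$ as $\ov x y$ where both $x$ and $y$ are palindromic infinite words, one needs an auxiliary letter so that the \Gred / freely reduced condition is preserved at the splice point.

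\textbf{The main obstacle} I expect is the induction up the ordinal hierarchy of degrees and the role of the limit-ordinal hypothesis. The elements $\G_d$ (words of degree at most $d$) form an increasing chain, and a general element of $\Ext(A,G)$ lives in some $\G_d$. The assumption that $\Omega$ is a limit ordinal (so $d+1 \in \Omega$ whenever $d \in \Omega$) is precisely what lets me build an involution of degree $d+1$ whose ``top layer'' realizes a prescribed word of degree $d$: given a \Gred word $x$ of degree $d$, I form the closed word $x \ov x$, but more usefully I embed $x$ as a finite-distance pattern inside a genuinely palindromic word one degree higher, using the extra ordinal to create the $[\cdots)(\cdots]$ structure. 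The delicate point is checking that the products of these higher-degree involutions, when multiplied, telescope back down to recover $x$ exactly in $\Ext(A,G)$ — this requires invoking \refcor{cor:embedGd} (embeddings between the $\G_d$) and \refcor{freeredS} (distinct freely reduced words stay distinct) to control cancellation, since naive products may collapse.

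Assembling these pieces, I would argue: by \refcor{freeredS} and \refcor{cor:embedGd} it suffices to generate each freely reduced closed word $x$; writing $x$ (or $x$ times a controlled palindrome) as a product $w_1 w_2$ of fixed points of the involution, each of order 2, completes the proof. The verification that each candidate factor is genuinely \Gred (so that Theorem~\ref{thm:confl} guarantees it is nontrivial of order exactly 2, not order 1) is the routine-but-essential bookkeeping I would carry out last, exploiting the free-group hypothesis $G = F(\SS)$ so that \Gred coincides with freely reduced and palindromicity is easy to read off from the explicit letter patterns.
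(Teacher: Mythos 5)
Your skeleton is the paper's: for a cyclically reduced $x$ of degree $d$ one builds a word $x_\infty$ of length $t_{d+1}$ (this is exactly where the limit-ordinal hypothesis enters, as you say) whose left half is the $x$-periodic ray $xxx\cdots$ and whose right half is its mirror image under the involution, so that $\ov{x_\infty}=x_\infty$ and, by the periodic structure, $x\,x_\infty = x_\infty\,\ov{x}$ as closed words; hence $x\,x_\infty$ is also a fixed point and $x = (x\,x_\infty)\,x_\infty$ is a product of two elements of order $2$ --- precisely your identity $a=(ax)x$. However, your concrete implementation at the letter level contains a genuine error: with an auxiliary letter $b\notin\oneset{a,\ov a}$ and $w=[bbb\cdots\;)(\;\cdots \ov b\,\ov b\,\ov b]$, the element $aw$ is \emph{not} a fixed point. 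Indeed $\ov{aw}=\ov{w}\,\ov a = w\ov a$, and $aw$ and $w\ov a$ are distinct freely reduced words (one begins with $a$, the other with $b$), hence distinct in $\Ext(A,G)$ by \refcor{freeredS}; so $(aw)^2\neq 1$. The identity $xw=w\ov x$ that makes $xw$ an involution forces the periodic pattern of $w$ to be built from $x$ itself, which is why the paper uses $a_\infty$ constructed from $a$ --- no auxiliary letter appears at this stage at all.

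Relatedly, you misplace the hypothesis $\abs \SS\geq 2$, and in doing so you skip the one nontrivial verification. The hypothesis is not about any ``splice'': it is needed because $x_\infty$ (whose left half reads $xxx\cdots$) and $x\,x_\infty$ are freely reduced only when $x$ is \emph{cyclically} reduced, a condition your sketch never mentions. For $x$ freely reduced but not cyclically reduced, the paper chooses $a\in\SS$ with $xa$ cyclically reduced --- possible precisely because $\abs\SS\geq 2$ --- and writes $x=(xa)\ov a$, making every freely reduced word a product of at most $4$ involutions; since freely reduced words generate $\Ext(A,F(\SS))$, the proposition follows. Without cyclic reducedness the candidate factors are not freely reduced, \refcor{freeredS} no longer certifies they are nontrivial (order exactly $2$), and the word-level identity $x\,x_\infty=x_\infty\ov x$ itself breaks down. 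Finally, your ``telescoping'' worry is vacuous: once $(x_\infty)^2=1$ in the group, $x=(x\,x_\infty)x_\infty$ requires no control of cancellation, and \refcor{cor:embedGd} plays no role in the argument.
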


\begin{proof}
 Let $x$ be cyclically  reduced with $\deg(x) = d$.  
(If $x$ is freely reduced, but $xx$ is not, then we can choose 
some $a \in \SS$ such that $xa$ is cyclically reduced since $\abs \SS \geq 2$.) 

We are going
to define a freely reduced word $x_\infty$ of length $t_{d+1}$ as follows. 
For $1 \leq \alp < t_{d+1}$ we let 
$x_\infty(\alp) = x^k(\alp)$, where $k\in \N$ is large enough that 
$\abs{x^k} \geq \alp$. 
Moreover, we let $x_\infty(t_{d+1}- \alp+1) = \ov{x_\infty(\alp)}$. 

Clearly, $x_\infty$ is freely reduced and $\ov{x_\infty} = x_\infty$, hence
$x_\infty$ is of order 2. Moreover, by construction, $x x_\infty = x_\infty \ov{x}$.
Hence, $x x_\infty$ has order 2, and $x = (x x_\infty)x_\infty$
is the product of two elements of order 2. Since $a_\infty$
is defined for $a \in \SS$, we see that all freely reduced words are 
a product of at most 4 elements of order 2. Now, freely reduced words generate
$\Ext(A, F(\SS))$, therefore elements of order 2 generate this group. 
\end{proof}


Clearly, as $G\sse\Ext(A,G)$, all torsion elements of $G$ appear in 
$\Ext(A,G)$ again, so we can conjugate them and have many more
torsion elements.

\begin{proposition}\label{torsion}
Let $x \in \Ext(A,G)$ be a torsion element
which is  not conjugated to any element in $G$. Then  
 there is a reduction $x \RAS*S y$ such that 
$y =\ov y$. In particular, we have  $x^2 = 1 \in \Ext(A,G)$. 
\end{proposition}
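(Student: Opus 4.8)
The plan is to produce a $G$-reduced reduct of $x$ and show that it must be a palindrome $y=\ov y$; the assertion $x^2=1$ is then a freebie, since $x\RAS*S y$ gives $x=y$ in $\Ext(A,G)$ and $y=\ov y=y^{-1}$ forces $y^2=1$. So the whole content is the construction of a palindromic reduct. First I would reduce a representative of $x$ as far as possible. Any $S$-irreducible reduct $y$ is automatically \Gred: a nonempty finite factor equal to $1$ in $G$ would, by confluence of $S_0$ with $1$ irreducible, be an $S_0$-redex, contradicting irreducibility. Moreover an irreducible element of $R^*(A,G)$ is a \emph{single} \Gred word, since if no junction of its generating factors carries cancellation or a trivial finite factor, the whole concatenation is itself \Gred. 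Finally $y$ must be infinite: a finite irreducible word represents an element of $\pi^A(G)\cong G$, which by \refcor{cor:embedG} would put $x$ inside $G$, contrary to assumption. (Here I would note that $S$ need not terminate because $S_0$ need not; this is repaired by working at the top degree $\deg(x)$, where the Big rules $u\ov u\to1$ strictly decrease the length, so there is no infinite descent of Big steps.)

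Next I would use torsion. Since $x\RAS*S y$, we have $y=x$ in $\Ext(A,G)$, so $y^n=1$ for the order $n$ of $x$. If $y$ were cyclically \Gred, then every power $y^k$ would be \Gred and infinite, hence $\neq 1$ in $\Ext(A,G)$ by \reflem{lem:otto}, which says a nonempty \Gred word never reduces to the empty word. This contradicts $y^n=1$. Therefore $y$ is not cyclically \Gred; in particular $y\cdot y$ is not \Gred, so genuine cancellation occurs at the junction of the two copies.

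The crux is then to analyse the reduction $y\cdot y\RAS*S 1$ with the factorisation bookkeeping developed in the proof of \reflem{lem:confllem}, writing the words as alternating products of infinite \Gred blocks and finite factors to which $S_0$ and at most a Big rule apply. Because each copy of $y$ is \Gred, the centre of every applicable Big redex $u\ov u$ must sit exactly at the junction, so the self-cancellation is a single ``zipper'' matching a suffix of $y$ against a prefix of $y$. The uncancelled core surviving the maximal such cancellation is conjugate to $y$; if it were finite then $y$, hence $x$, would be conjugate into $G$, and if it were an infinite cyclically \Gred word it would be non-torsion by the argument of the previous paragraph — both are excluded. Hence the cancellation is total, matching $y(1+i)$ with $\ov{y(\abs{y}-i)}$ for every $i$; after checking that the finite central $S_0$-adjustment is forced to be trivial (exactly the $g_m=h_m=1$ step of \reflem{lem:confllem}, again because $y$ is \Gred), this says precisely $y=\ov y$ as words. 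Since then $x=y$ in $\Ext(A,G)$ and $y^2=y\ov y=1$, we conclude $x^2=1$.

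I expect this third step to be the real obstacle: turning ``$y\cdot y$ cancels completely'' into the literal letter-by-letter identity $y=\ov y$ across the transfinite centre of a \nonarch word, while allowing the finite $S_0$-rewrites to reshuffle letters near the middle. This is the same delicate positional reasoning that makes \reflem{lem:confllem} long, so I would import that machinery rather than redo it. Everything else — the existence of an infinite \Gred reduct, the exclusion of cyclically \Gred (hence non-torsion) cores via \reflem{lem:otto}, and the involution-closedness of $S$ used implicitly throughout — is routine given \refthm{thm:confl}.
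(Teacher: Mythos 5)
Your opening step is unsound: for general $G$ the system $S$ has \emph{no} nonempty irreducible words, so ``reduce as far as possible'' produces nothing. The paper's $S_0$ contains every rule $u \to v$ with $u\neq v$ as words but $u=v$ in $G$ (e.g.\ $a \to aa\ov a$), hence every nonempty element of $W(A,\GG)$ is $S_0$-reducible and $\IRR(S)=\oneset{1}$. Your proposed repair only bounds the number of Big steps at top degree; it yields neither an irreducible nor even a \Gred representative. Indeed a \Gred reduct need not exist at all: the paper notes in Example~\ref{ex:conj} that $S_0$ need not terminate on infinite words, and in \refsec{sec:cdr} it exhibits the element $sb\ov s \in \Ext(A,F(\SS))$ which equals \emph{no} freely reduced word (and for free $G$, \Gred means freely reduced). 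The paper's proof deliberately avoids irreducibility and uses an extremal choice instead: among all reducts $x \RAS{*}{S} y$ pick $y$ minimizing first the degree $d$ of $\abs y$ and then the leading coefficient $n_d$ (possible because $\Omega\times\N$ is well-ordered under the lexicographic order). Minimality gives exactly the invariant needed --- $y$ contains no factor $u v \ov u$ with $\deg(v)<\deg(u)=d$ and $v \RAS{*}{S} 1$ --- which is much weaker than your ``single infinite \Gred word'' but suffices for the junction analysis of $y^k \RAS{*}{S} 1$. (Note also that you analyze $y\cdot y \RAS{*}{S} 1$, which presupposes the order is $2$; the paper works with $y^k$ for the actual order $k$.)

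The second gap is the endgame. Your dichotomy for the surviving core --- finite, or infinite cyclically \Gred --- omits the real case: infinite but \emph{not} cyclically \Gred. Moreover your ``maximal such cancellation'' need not exist, since bounded subsets of a \nonarch ordered group have no maximal elements in general (the cancellable suffix lengths may form a set like $\set{n}{n\in\N}$ inside $[0,\abs y]$). The paper replaces maximality by a dichotomy plus well-founded recursion: from the pinch one extracts a word $z$ of degree $d$ with $\ov z$ a prefix and $z$ a suffix of $y$; if $\abs y < 2\abs z$ the occurrences overlap and a direct positional computation yields $y=\ov y$ (this overlap case is precisely your feared ``transfinite centre'' step, and it is short); otherwise $y = \ov z\, y' z$, one replaces the torsion element by its conjugate $y'$ and recurses, the measure $(\deg, n_d)$ dropping lexicographically because each pass strips degree-$d$ material from both ends. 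Your exclusions of a finite core (it would conjugate $x$ into $G$) and of an infinite cyclically \Gred core (non-torsion by \reflem{lem:otto}) are correct and match the paper's use of the hypotheses; what is missing is exactly the extremal choice of $y$ and the induction that dispose of the remaining case, which is where the proof's actual content lies.
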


\begin{proof}
 Choose $x \RAS*S y$ such that $d\in \Omega$ is minimal and $\abs y = n_dt_d +\ell$ 
 with $\deg(\ell) < d$. Moreover, among these $y$ let the 
 leading coefficient $n_d \in \N$ be  minimal, too. Note that
 $y$ cannot contain any factor $u v \ov u$ where $\deg(v)<\deg(u) = d$ and 
 $v\RAS*S 1$. Since $x$ has torsion, we may assume  $x^k= 1 \in \Ext(A,G)$ for some
 $k >1$. Hence $y^k\RAS*S 1$ due to  confluence of $S$. Now, 
 $\deg(y^k) = d$, hence $y^k\RAS*S 1$ implies that $y^k$ has a factor
 $u v \ov u$ where $\deg(v)<\deg(u) = d$ and 
 $v\RAS*S 1$. Making $v$ larger and $u$ (and $\ov u$) smaller, we can factorize $ v = v_1v_2$ such that
 $uv_1$ is a suffix of $y$ and $v_2\ov u$ is a prefix of $y$. 
 Moreover, for some closed word $z$ of degree $d$ we have 
 $uv_1 \RAS*S z \LAS*S u\ov{v_2}$. Hence we can assume that $z$ is a suffix of $y$ and $\ov z$ is a prefix of $y$. If $z$ and $\ov z$ overlap in $y$ (that
 is $\abs y < 2 \abs z $), then we have $y =\ov y$. 
 Otherwise we  write $y = \ov z y' z$ and
 we replace $x$ by $y'$ and we use induction. 
 \end{proof}
%

\section{Group extensions over $A=\Z\bracket{t}$}\label{sec:ztextensions}
For the remainder of the present paper we assume that $A=\Z\bracket{t}$.
This  means $A$ is the additive group of the polynomial ring over $\Z$ in one variable $t$. The reason for the choice of $A$ is that we wish the 
subgroup $A^{\deg<d}$ to be finitely generated for each degree $d\in \Omega$ where:  
 $$ A^{\deg<d} = \set{\beta \in A}{\deg(\beta) <d}.$$

 This assumption is clear for $A=\Z\bracket{t}$, 
 because each such subgroup is isomorphic 
 to $\Z^d$ with $d \in \N$. 
 Moreover, every finitely generated subgroup $H$ 
 of  $\Ext(A,G)$ sits inside some  $\Ext(\Z^d,G)$.
 
 We shall  use  
 the following well-known fact:

\begin{lemma}\label{ascendings}
Let $k\geq 0$ and $$A_0 \subseteq A_1 \subseteq A_2 \subseteq A_3 \cdots$$
 be an infinite ascending chain of subgroups in  $\Z^k$. Then this
 chain becomes stationary, i.e., there is some $m$ such that 
 $A_m = A_n$ for all 
all $n \geq m$. 
\end{lemma}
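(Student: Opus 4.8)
The plan is to reduce the claim to the standard structure theory of finitely generated abelian groups, specifically to the fact that every subgroup of $\Z^k$ is itself finitely generated (and free abelian of rank at most $k$). Granting this input, the argument is short: I would form the union $A = \bigcup_{n \geq 0} A_n$. Since the $A_n$ form an ascending chain, $A$ is again a subgroup of $\Z^k$, hence finitely generated, say $A = \gen{g_1 \lds g_s}$. Each generator $g_j$ lies in some $A_{n_j}$, and setting $m = \max\smallset{n_1 \lds n_s}$, all the $g_j$ already lie in $A_m$ because the chain is increasing. Therefore $A \sse A_m \sse A_n \sse A$ for every $n \geq m$, which forces $A_n = A_m$ for all $n \geq m$, as required.

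The only real content is the input that subgroups of $\Z^k$ are finitely generated, so the ``obstacle'' is merely whether to cite this or to establish it in passing; there is no genuine difficulty once it is in hand. If a self-contained argument is preferred, I would instead proceed by a two-step rank-then-index analysis. First, each $A_n$ is free abelian of rank $r_n \leq k$, and $r_n$ is nondecreasing in $n$ and bounded by $k$, hence eventually constant, say $r_n = r$ for all $n \geq m_0$. Once the rank is stable, all $A_n$ with $n \geq m_0$ span the same rational subspace $V = A_{m_0} \otimes \Q$ of dimension $r$, and each is sandwiched as $A_{m_0} \sse A_n \sse \Z^k \cap V$. The lattice $\Z^k \cap V$ is free of rank $r$ and contains $A_{m_0}$ with finite index $N = [\Z^k \cap V : A_{m_0}]$. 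Consequently the indices $[A_n : A_{m_0}]$ form a nondecreasing sequence of positive integers bounded by $N$, so they stabilize; and if $A_n \sse A_{n+1}$ have equal finite index over $A_{m_0}$, then $[A_{n+1}:A_n]=1$, i.e.\ $A_n = A_{n+1}$.

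Either way the heart of the matter is purely the Noetherian property of $\Z^k$ as a $\Z$-module; there is no interaction with the word combinatorics developed in the earlier sections, which is exactly why I would present it as a cited ``well-known fact'' and keep the proof to a few lines.
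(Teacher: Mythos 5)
Your proof is correct, and it matches the paper's treatment: the paper states \reflem{ascendings} as a ``well-known fact'' with no proof at all, which is precisely the presentation you advocate, and your union-of-the-chain argument (reducing to the fact that subgroups of $\Z^k$ are finitely generated, i.e.\ the Noetherian property of $\Z^k$ as a $\Z$-module) is the standard justification. Your alternative rank-then-index argument is also sound, though note it is not really more self-contained, since freeness of the subgroups $A_n$ rests on the same structure-theoretic input.
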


\subsection{Proper periods}\label{sec:proper}

Let $w \in W(A,\GG)$ be a word of length $\alpha \in A$, given as 
a mapping 
$w:[1,\alpha] \to \GG$.
An element $\pi \in A$ is called a \emph{period} of $w$,  
if 
for all $\beta \in A$ such that 
$1 \leq \beta,\, \beta + \pi \leq \alpha$ we have 
$$w(\beta)= w(\beta + \pi).$$

A period $\pi$ is called a \emph{proper period} of $w$,  
if $\deg(\pi) < \deg (w)$. In the following we are interested in proper periods, only. 
We have the following basic lemma. 

\begin{lemma}\label{lem:proper}
Let $w \in W(A,\GG)$ of degree $\deg(w) = d$ with $0\leq d$, then the set $\Pi(w)$ of proper 
periods forms a subgroup of $A^{\deg<d}$. 
\end{lemma}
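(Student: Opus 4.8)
The plan is to verify the subgroup axioms directly. Since every proper period lies in $A^{\deg<d}$ by definition, it suffices to show that $\Pi(w)$ contains $0$ and is closed under negation and addition. Throughout I read the defining condition symmetrically: $\pi$ is a period precisely when $w(\beta)=w(\beta+\pi)$ holds whenever \emph{both} $\beta$ and $\beta+\pi$ lie in $[1,\alpha]$ (for $\pi\ge 0$ this is exactly the condition as stated, and it is the only reading under which $w$ is defined at both arguments). If $d=0$ then $A^{\deg<d}=\{0\}$ and there is nothing to prove, so I assume $0<d$.

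That $0\in\Pi(w)$ is clear, and closure under negation is immediate from the symmetric reading: if $\pi\in\Pi(w)$ and $\gamma,\gamma-\pi\in[1,\alpha]$, then applying the period $\pi$ at $\beta=\gamma-\pi$ gives $w(\gamma-\pi)=w(\gamma)$; moreover $\deg(-\pi)=\deg\pi<d$, so $-\pi\in\Pi(w)$.

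The substantial step is closure under addition, where I expect the only real difficulty. Given $\pi,\pi'\in\Pi(w)$, the bound $\deg(\pi+\pi')\le\max(\deg\pi,\deg\pi')<d$ keeps $\pi+\pi'$ inside $A^{\deg<d}$, so I must only verify the period property. I would fix $\beta$ with $\beta,\beta+\pi+\pi'\in[1,\alpha]$ and try to connect $w(\beta)$ to $w(\beta+\pi+\pi')$ by two period-steps through an intermediate position that remains in $[1,\alpha]$. The two candidate intermediates are $\beta+\pi$ (the route $\beta\to\beta+\pi\to\beta+\pi+\pi'$, using $\pi$ then $\pi'$) and $\beta+\pi'$ (the route using $\pi'$ then $\pi$); each route is legitimate as soon as its intermediate point lies in $[1,\alpha]$, the remaining endpoint conditions being automatic because $\beta$ and $\beta+\pi+\pi'$ already do. Thus everything reduces to showing that $\beta+\pi$ and $\beta+\pi'$ cannot both leave $[1,\alpha]$.

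To rule this out I would argue by cases. They cannot both exceed $\alpha$: from $\beta+\pi'>\alpha\ge\beta$ one gets $\pi'>0$, so $\beta+\pi+\pi'>\beta+\pi>\alpha$, contradicting $\beta+\pi+\pi'\le\alpha$; dually, both falling below $1$ forces $\beta+\pi+\pi'<\beta+\pi<1$, again impossible. In the mixed case, say $\beta+\pi>\alpha$ and $\beta+\pi'<1$, I obtain $\alpha-\pi<\beta<1-\pi'$, whence $\alpha<1+\pi-\pi'$; but $\deg(1+\pi-\pi')\le\max(0,\deg\pi,\deg\pi')<d=\deg\alpha$, so $1+\pi-\pi'<\alpha$, a contradiction, and the opposite orientation is symmetric. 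Hence at least one intermediate point survives, one of the two routes applies, and $w(\beta)=w(\beta+\pi+\pi')$, proving $\pi+\pi'\in\Pi(w)$. The crux is exactly this last inequality: because the length $\alpha$ has degree strictly larger than the periods, $\alpha$ dominates $1+\pi-\pi'$, which is what forbids both one-step detours from simultaneously exiting the interval and is the sole place the properness hypothesis $\deg(\pi)<\deg(w)$ is used.
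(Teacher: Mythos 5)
Your proof is correct and takes essentially the same approach as the paper's: both reduce the one nontrivial subgroup axiom to routing through one of two intermediate positions (your $\beta+\pi$ or $\beta+\pi'$; the paper's $\beta+\pi$ or $\beta-\pi'$ after normalizing to $0\le \pi'\le \pi$ and treating closure under differences) and invoke the properness hypothesis $\deg(\pi),\deg(\pi')<d=\deg(\alpha)$ to guarantee that at least one intermediate point stays in $[1,\alpha]$. Your version merely organizes this more uniformly, handling arbitrary signs of $\pi,\pi'$ at once via the inequality $\alpha<1+\pi-\pi'$ being impossible on degree grounds, which is the same crux the paper uses when it deduces $\deg(\beta)=\deg(w)$ and hence $\beta-\pi'\ge 0$.
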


\begin{proof}
We have  $0 \in \Pi(w)$. If 
$\pi \in \Pi(w)$, then $-\pi \in \Pi(w)$, too. Let 
$\pi', \pi \in \Pi(w)$ with $0 \leq \pi' \leq \pi$.
Clearly, $\pi+\pi'$ is a proper period, too.
It remains to show that $\pi-\pi'$ is a proper period. 
To see this, let $\beta \in A$ such that 
$0 \leq \beta,\, \beta + \pi - \pi'\leq \abs w$. For 
$\beta + \pi \leq \abs w$ the element 
$\pi-\pi'$ is a proper period, because
then $w(\beta)= w(\beta + \pi) = w(\beta + \pi - \pi').$
Hence we may assume that $\beta + \pi > \abs w$. 
But $\deg(\pi) < \deg (w) $, hence $\deg(\bet) = \deg (w)$ and therefore   $0 \leq \beta - \pi'$. Thus,
$w(\beta)= w(\beta - \pi') = w(\beta + \pi - \pi').$
\end{proof}

Together with \reflem{ascendings} the lemma above leads
us to the following observation: 

\begin{proposition}\label{prop:proper}
Let $w_0, w_1, w_2, w_ 3, \ldots$
 be an infinite sequence of elements of $W(A,\GG)$
 such that $w_{i+1}$ is always a non-empty factor of $w_{i}$. Let 
 $$\Pi_0,\Pi_1, \Pi _2, \Pi_3, \ldots$$
 be the corresponding sequence of proper periods  in  $A$. 
 Then this
 sequence  of groups becomes stationary, i.e., there is some $m$ such that 
 $\Pi_m = \Pi_n$ for all 
all $n \geq m$. 
\end{proposition}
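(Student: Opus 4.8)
The plan is to reduce the statement to an application of \reflem{ascendings} by showing that, after discarding an initial segment of the sequence, the groups $\Pi_i$ form an \emph{ascending} chain inside a single fixed copy of $\Z^d$.

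First I would stabilize the degrees. Since each $w_{i+1}$ is a factor of $w_i$, we have $\abs{w_{i+1}} \leq \abs{w_i}$, and for non-negative $\alpha \leq \beta$ in $A$ the identity $\beta = \alpha + (\beta-\alpha)$ with $\beta - \alpha \geq 0$ gives $\deg(\beta) = \max(\deg(\alpha),\deg(\beta-\alpha)) \geq \deg(\alpha)$, because adding two non-negative elements cannot cancel the top coefficient. Hence $\deg(w_{i+1}) \leq \deg(w_i)$. As $A = \Z[t]$, the degrees lie in $\N$, so the non-increasing sequence $(\deg(w_i))_i$ is eventually constant, say equal to $d \geq 0$ for all $i \geq N$. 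By \reflem{lem:proper} each $\Pi_i$ with $i \geq N$ is then a subgroup of the fixed group $A^{\deg < d}$, which for $A = \Z[t]$ is isomorphic to $\Z^d$.

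The decisive step is to check the direction of inclusion. Periods are inherited by factors: if $\pi$ is a period of $w$ and $w'$ is a factor of $w$, then for every $\beta$ with $\beta$ and $\beta+\pi$ in the domain of $w'$ we have $w'(\beta) = w(\beta) = w(\beta+\pi) = w'(\beta+\pi)$, so $\pi$ is a period of $w'$. Now fix $i \geq N$ and take $\pi \in \Pi_i$. Then $\pi$ is a period of $w_i$ with $\deg(\pi) < d$, hence a period of the factor $w_{i+1}$; and since $\deg(w_{i+1}) = d$ as well, the condition $\deg(\pi) < d = \deg(w_{i+1})$ says exactly that $\pi$ is a \emph{proper} period of $w_{i+1}$, i.e. $\pi \in \Pi_{i+1}$. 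Thus $\Pi_i \subseteq \Pi_{i+1}$ for all $i \geq N$.

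Therefore $(\Pi_i)_{i \geq N}$ is an ascending chain of subgroups of $\Z^d$, and \reflem{ascendings} shows it becomes stationary; finitely many initial terms do not affect stationarity, so the whole sequence becomes stationary. The one point to watch is precisely that the chain runs in the \emph{ascending} direction: this is what makes \reflem{ascendings} applicable, and it is essential because the descending chain condition fails in $\Z^d$ (e.g. $\Z \supsetneq 2\Z \supsetneq 4\Z \supsetneq \cdots$). Everything hinges on the observation that shrinking to a factor of the same degree can only create new proper periods, never destroy them.
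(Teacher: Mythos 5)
Your proof is correct and follows the paper's own argument exactly: stabilize the (non-increasing, hence eventually constant) degrees, observe that the proper-period groups then form an \emph{ascending} chain in a fixed $\Z^d$, and apply \reflem{ascendings}. The details you fill in — degree monotonicity under factorization and the inheritance of periods by factors of the same degree — are precisely the steps the paper leaves implicit, so there is nothing to correct.
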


\begin{proof}The sequence of degrees is descending and becomes 
stationary. Hence we may assume that in fact 
$$0 \leq \deg(w_0) =\deg( w_1) = \deg (w_2) = \deg (w_ 3) = \cdots.$$
As a consequence  $$\Pi_0 \subseteq \Pi_1 \subseteq \Pi_2 \subseteq \Pi_3 \cdots$$
 is an ascending chain of subgroups in some $\Z^k$
 which becomes therefore stationary.
 \end{proof}
\section{Deciding the \WP in $\Ext(A,G)$}\label{sec:wp}
Recall that for a finitely generated group the decidability of the \WP
does not depend on the presentation: It is a property of the group. 
In the following we restrict ourselves to the case  that $\GG$ is finite
(in particular, 
$G$ is finitely generated).
The main difficulty for deciding the \WP 
in $\Ext(A,G)$ is due to periodicity.

\subsection{Computing reduced degrees}
Let $S$ be the  system defined in Equation~\ref{eq:hugo} which 
is  confluent by  \refthm{thm:confl}.
If we have $x\RAS * S y$ then we have $\deg(x) \geq \deg(y)$. 
Thus, we can define the \emph{reduced degree} by
$$ \rdeg x = \min \set {\deg(y)}{x\RAS * S y}.$$
Note that $ \rdeg x$ is well-defined for group elements
$x \in \Ext(A,G)$ due the confluence of $S$.

\begin{lemma}\label{karl}
Let $u\in R(A,G)$ be a non-empty \Gred word. Then we have 
$0 \leq \deg(u) = \rdeg u$. 
\end{lemma}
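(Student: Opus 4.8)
The plan is to reduce everything to \reflem{lem:otto}, which already controls the shape of any $S$-derivation that starts from a non-empty \Gred word. First I would dispose of the inequality $0 \leq \deg(u)$ separately, since it is purely about the length: as $u$ is non-empty its length $\abs u = \alpha$ satisfies $\alpha \geq 1 = t_0$, and because $\deg(t_0) = 0$ while $\alpha$ is positive, we obtain $\deg(u) = \deg(\alpha) \geq 0$. This part requires nothing beyond the conventions on degrees.

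For the equality $\deg(u) = \rdeg u$, one inclusion is free: the trivial derivation $u \RAS*S u$ shows that $\deg(u)$ is one of the values in the minimum, so $\rdeg u \leq \deg(u)$. For the reverse inequality I would take an arbitrary $y$ with $u \RAS*S y$ and argue that in fact $\deg(y) = \deg(u)$, which forces the minimum to be attained at $\deg(u)$ itself. Here is where \reflem{lem:otto} enters: it guarantees both that every such derivation is already an $S_0$-derivation $u \RAS*{S_0} y$ and that $y$ is non-empty. The key observation is then that each rule $(\ell,r) \in S_0$ lies in $\GG^* \times \GG^*$, so applying it to a factorization $u = p\ell q$ replaces a \emph{finite} factor and alters the total length only by the finite amount $\abs r - \abs\ell \in \Z = \gen{t_0}$. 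Composing the finitely many steps of the derivation yields $\abs y = \abs u + c$ for some $c \in \Z$.

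It then remains to split according to $\deg(u)$. If $\deg(u) > 0$, adding the degree-$0$ element $c\,t_0$ cannot change the height, so $\deg(y) = \deg(u)$. If $\deg(u) = 0$, then $u$ is a finite word and so is $y$; since $y$ is non-empty we have $\abs y \geq 1$, which forces $\deg(y) = 0 = \deg(u)$. In either case $\deg(y) = \deg(u)$, and hence $\rdeg u = \deg(u)$, as desired.

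The only mildly delicate point is keeping the two degree regimes apart. The slogan used already in the proof of \refprop{prop:huge}, that rules of $S_0$ cannot lower the degree, is literally false at degree $0$, where finite cancellation genuinely shortens the word; at that height, however, the non-emptiness of $y$ supplied by \reflem{lem:otto} is exactly what rescues the argument. Everything else is routine bookkeeping once that lemma is invoked.
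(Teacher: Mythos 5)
Your proof is correct and follows exactly the paper's route: the paper disposes of this lemma with the single line ``This is a direct consequence of \reflem{lem:otto},'' and your argument is precisely the intended unpacking of that remark (invoke \reflem{lem:otto} to get an $S_0$-derivation with non-empty target, observe that $S_0$-rules change the length only by an integer, and handle the degree-$0$ case via non-emptiness). Your closing observation that the slogan from \refprop{prop:huge} fails literally at height $0$, and that the non-emptiness clause of \reflem{lem:otto} is what repairs it there, is exactly the right delicate point to flag.
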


\begin{proof}
 This is a direct consequence of \reflem{lem:otto}. 
\end{proof}

Clearly, since $G$ is a subgroup of $\Ext(A,G)$, the 
\WP of $G$ must be decidable, otherwise we cannot 
hope to decide the \WP for finitely generated subgroups of  $\Ext(A,G)$. 

Our goal is to solve the \WP in $\Ext(A,G)$
via the following strategy. We 
 compute on input $w \in W(A,\GG)$
some $w' \in W(A,\GG)$ such that both $w\DAS * S w'$ and
$\deg(w') = \rdeg w$.
If $\deg(w') > 0$, then $w\neq 1$ in $\Ext(A,G)$.
Otherwise $w'$ is a finite word over $\GG$ and
we can use the algorithm for $G$ which decides whether 
or not $w'= 1$ in $G\subseteq \Ext(A,G)$.

In order to achieve this goal we need a slightly stronger
condition on $G$. We need that the non-uniform cyclic membership problem in $G$ is decidable. This means that for each $v\in \GG^*$ there
is an algorithm $\cA(v)$ which solves the problem "$u \in \gen{v}$?".
Thus, $\cA(v)$ decides on input $u\in \GG^*$
whether or not $u$ (as an element of $G$) is in the subgroup 
of $G$ which is generated by $v$. This requirement on $G$ is indeed a necessary condition:

\begin{theorem}\label{neccond}
Assume that the \WP 
is decidable for each finitely generated subgroup 
of $\Ext(A,G)$. Then  for each $v\in \GG^*$ there
exists an algorithm which decides on input $u\in \GG^*$
whether or not $u$ (as an element of $G$) is in the subgroup 
of $G$ which is generated by $v$. 
\end{theorem}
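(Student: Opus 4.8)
The plan is to turn a cyclic-membership query ``$u\in\gen v$?'' into finitely many \WP queries inside a \emph{fixed} finitely generated subgroup of $\Ext(A,G)$, exploiting the fact that an infinite word of period $v$ simultaneously ``contains'' all powers $v^{n}$; the \WP oracle then carries out the unbounded search over the exponent $n$, which is exactly what makes cyclic membership delicate. First I would dispose of the degenerate cases using only the \WP of $G$, which is decidable because $G$ embeds in $\Ext(A,G)$ as a finitely generated subgroup by \refcor{cor:embedG}. If $v=1$ in $G$ then $\gen v=\{1\}$ and the query is the \WP of $G$; if $v$ has finite order $N$ (detected by testing $v^{k}=1$ for $k=1,2,\dots$), then $\gen v=\{v^{0},\dots,v^{N-1}\}$ and membership is decided by $N$ applications of the \WP of $G$. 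Hence I may assume $v$ has infinite order.

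In the remaining case I would fix a cyclically \Gred word representing $v$, so that the bi-infinite pattern of period $v$ is \Gred, and form the closed word $w=[vvv\cdots)(\;\cdots vvv]$ of length $t$, an element of $\Ext(A,G)$. Since $\GG$ is finite, $H=\gen{\GG\cup\{w\}}$ is finitely generated, so by hypothesis its \WP is decidable. Straight from the definition of concatenation one has $vw=wv$ in $W(A,\GG)$, so $\gen v$ centralises $w$; in particular so does the cyclic group generated by the primitive word-period $p$ of $v$ (the shortest word with $v=p^{k}$ \emph{as words}, which is computable from $v$), and $\gen v=\gen{p^{k}}\subseteq\gen p$.

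The heart of the matter is the converse equivalence: a finite word $u$ satisfies $uw=wu$ in $\Ext(A,G)$ if and only if $u\in\gen p$ in $G$. For this I would compare, via confluence (\refthm{thm:confl}, hence Church--Rosser), irreducible forms of $uw$ and of $wu$: reducing at the single finite seam turns $uw$ into a \Gred word whose right part is the periodic pattern of $v$, and $wu$ into one whose left part is that pattern, and equality of these two \Gred words forces the finite ``defect'' on each side to be compatible with the period. The essential point, and the main obstacle, is that a finite derivation cannot transport a finite non-periodic factor across the infinite periodic word, so commuting with $w$ is possible only by acting as a genuine shift by a multiple of the period; here the fact that the proper periods of a factor form a subgroup that stabilises along nested factors (\reflem{lem:proper}, \refprop{prop:proper}) pins down the primitive period and yields $u=_G p^{m}$ for some $m$. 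Granting this equivalence, the algorithm is: decide $uw=wu$ in $H$ by the oracle; if it fails, answer \textsc{No}; otherwise $u\in\gen p$, so search for the (existing, hence found) exponent $m$ with $u=_G p^{m}$ by testing $u p^{-m}=1$ in $G$ for $m=0,\pm1,\pm2,\dots$, and answer \textsc{Yes} exactly when $k\mid m$. Since $v$ has infinite order the search terminates, and the whole correctness argument reduces to the displayed centraliser equivalence, whose proof is the technical core.
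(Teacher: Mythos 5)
Your overall architecture (reduce to a primitive root $p$ of $v$, build the degree-one periodic word $w$, test commutation $uw=wu$ with the \WP oracle in the finitely generated subgroup $\gen{\GG\cup\oneset{w}}$, then search for the exponent $m$ and check $k\mid m$) coincides with the paper's endgame, but there is a genuine gap at the first nontrivial step: you ``fix a cyclically \Gred word representing $v$''. Nothing guarantees that such a representative exists, and you offer no argument and no fallback. For an arbitrary infinite-order $v$ the pattern $vvv\cdots$ need not be \Gred: already for $v=ca\ov c$ the square contains the factor $\ov c c$, and if $c$ has order $3$ then alternatives such as $cacc$ produce the trivial factor $ccc$ in the square, so it is not at all clear that \emph{any} word for the same element fares better. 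The paper's proof is organized around precisely this obstruction: it runs an induction on $\abs v$, using the reduction that ``$u\in\gen v$?'' is solvable as soon as ``$u\in\gen{pv^k\ov p}$?'' is solvable for some fixed $p$ and $k\neq 0$ (via $u\in\gen v$ \IFF $puv^i\ov p\in\gen{pv^k\ov p}$ for some $0\leq i<\abs k$), so that one may assume no proper factor of the word $v$ equals any $pv^k\ov p$ in $G$; only under that inductively justified assumption does it prove that every power $v^m$ is \Gred, which is what licenses forming $V=[vvv\cdots\;)(\;\cdots vvv]$ at all. Without this reduction, or some substitute establishing the existence of a cyclically \Gred representative, your construction of $w$ and everything downstream is unavailable. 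This is the missing idea, and it is the main technical content of the paper's proof.

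Second, the statement you yourself flag as ``the technical core'' --- that $uw=wu$ in $\Ext(A,G)$ forces $u\in\gen p$ --- is asserted rather than proved, and the tools you cite do not prove it: \reflem{lem:proper} and \refprop{prop:proper} concern stabilization of groups of proper periods and serve the preprocessing in \refthm{thm:decwp}, not this equivalence. The paper's argument is different: from $uV=Vu$, finitely many applications of defining relations of $G$ must transform the one-sided infinite word $u\,vvv\cdots$ into $vvv\cdots$, which yields $uv^k=v^{\ell}p$ in $G$ for some factorization $v=pq$ with matching infinite tails, and primitivity of $v$ \emph{as a word} then forces $p\in\oneset{1,v}$, hence $u\in\gen v$. (A smaller point: your ``detection'' of torsion by testing $v^k=1$ for $k=1,2,\dots$ only semi-decides finiteness of order; this is harmless because the theorem is non-uniform in $v$ --- as the paper remarks, one only needs the \emph{existence} of an algorithm for each fixed $v$, so the case split need not be effective --- but your write-up suggests you intend it as an effective step.) As it stands, the proposal reproduces the paper's final mechanism but omits the induction that makes that mechanism applicable and leaves its central equivalence unproven.
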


\begin{proof} 
Let $v\in \GG^*$ be a finite word. If $v$ is empty we are done because 
"$u\in \gen{1}$?" is nothing but the \WP for $G$ (which is a finitely generated subgroup 
of $\Ext(A,G)$). Hence we may assume that $v$ is non-empty and moreover, 
$v\neq 1$ in $G$. 
If $v$ is a torsion element, then the question 
whether or not $u$  is in the subgroup 
 generated by $v$ can be reduced to the \WP.
Hence may assume that $v^k \neq 1$ for all $k\neq 0$. 
We perform an induction on the length of $v$ which allows to view $v$ as a finite \Gred word.

We can solve the problem 
"$u\in \gen{v}$?" for all inputs $u$ as soon as we can solve the problem 
"$u\in \gen{pv^k\ov p}$?" for some $p$ and $k \neq 0$ for all inputs $u$.
Indeed, fix $p$ and $k$. Then, $u \in \gen{v}$ \IFF{}
$puv^i\ov p \in \gen{pv^k\ov p}$ for some $ 0 \leq i <\abs k$. 
Clearly, $puv^i\ov p \in \gen{pv^k\ov p}$ implies $u \in \gen{v}$.
For the other direction let $u = v^m$. We can write 
$m = \ell k -i$ with $\ell \in \Z$ and $ 0 \leq i <\abs k$.
It follows $puv^i\ov p \in \gen{pv^k\ov p}$.
Thus, the problem 
"$u\in \gen{v}$?" is reduced to the 
problem:
$$\mbox{"} \exists i: \, 0 \leq i <\abs k \quad \& \quad puv^i\ov p \in \gen{pv^k\ov p}\mbox{?"}$$
Therefore,  by induction on $\abs v$ we may assume 
that no proper factor $w$ of the word $v$ is equal to any  
$pv^k\ov p$ in $G$. (We only need the 
existence of an algorithm. There is no need to construct 
the algorithm on input $v$.) 

Next, we claim that every power $v^m$ is \Gred.
Assume the contrary, then there are words $p,q,r,s$ and $k \in \N$
such that 
$v= pq = rs$ and $q \neq v \neq r$ as words, but $qv^kr = 1$ in $G$. 
Note that neither $r$ nor $q$ can be the empty word by the induction 
hypothesis. Moreover, $p \neq r$ because $v^{k+1} \neq 1$ in $G$. 
If $\abs{p} < \abs{r}$, then we can write $r= pw$ where $w$ is a proper factor of $v$,
and we obtain 
$$1= qv^kpw = \ov p p qv^kpw = \ov p v^{k+1} p w.$$
This is impossible
since no proper factor of $v$ is of the form $ p v^{-k-1} \ov p$ in $G$.

If $\abs{p} > \abs{r}$, then $p= rw$ for some proper
factor $w$ of $v$. We obtain $qv^kp = w$ in $G$.
Again this is impossible, because it would imply 
$qv^kpq \ov q = q v^{k+1} \ov q = w$ in $G$.

Thus,  $V= [vvv\cdots\;)(\;\cdots vvv]$ is a \Gred word 
of degree 1 in $\Ext(A,G)$. Next, we may assume that $v$ is a primitive word, this means $v$  is no proper power of 
any other word. It follows that $v$ does not appear properly inside
$vv$ as a factor. 

We claim that now,  $u \in \gen{v}$ \IFF{} $uV=Vu$ in  $\Ext(A,G)$.
Clearly, if 
$u \in \gen{v}$ then $uV=Vu$ in  $\Ext(A,G)$. For the other 
direction let  $uV=Vu$ in  $\Ext(A,G)$. Then by applying 
finitely many times defining relations for $G$ we must be able to
transform the one-sided partial infinite word $u\, vvv\cdots\;$ into 
$vvv\cdots\;$. Thus for some word $w\in \GG^*$, a factorization 
$v= pq$,  and $k,\ell\in \N$ we obtain
$uv^k = v^\ell p$ in $G$ such that the infinite words 
$w vvv \cdots\;$ and   $wqvvv\cdots\;$ are equal. But $v$ is primitive
and hence $p\in \oneset{1,v}$. Thus,  $u \in \gen{v}$. 
\end{proof}

\begin{theorem}\label{thm:decwp}
 Let $G$ be a group such that for each $v\in \GG^*$ there
is an algorithm which decides on input $u\in \GG^*$
whether or not $u\in G $ is in the subgroup 
of $G$ generated by $v$. 

Then for each finite subset $\Delta \subseteq  W(A,\GG)$ 
of \Gred words (i.e., $\Delta \subseteq  R(A,G)$) there 
is an algorithm which computes on input $w \in \Delta^*$ its reduced degree and
some $w' \in W(A,\GG)$ such that both $w\DAS * S w'$ and
$\deg(w') = \rdeg w$.
\end{theorem}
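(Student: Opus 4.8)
The plan is to run a left-to-right sweep over the factorization $w=w_1\cdots w_n$ (each $w_i\in\Delta$ being \Gred), maintaining a \Gred \emph{reduced decomposition} $v_1\cdots v_k$ of the prefix processed so far, and at each step folding in the next factor $w_{i+1}$ by cancelling at the rightmost boundary until that boundary is blocked. The fact that makes this work is that, by \reflem{lem:otto} and since the rules of $S_0$ cannot lower a nonzero degree (as already used in the proof of \refprop{prop:huge}), the degree of an element can be lowered only by the \emph{Big} rules $u\overline u\to1$ with $u$ infinite, and such a rule fires only across a boundary between \Gred factors. Hence $\rdeg w$ will be exactly the degree of the word obtained once every boundary has been blocked.

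The core subroutine cancels two \Gred words $x,y$, returning the reduction of $xy$. I first determine the \emph{cancellation length} $\lambda\in A$, the largest $\lambda$ for which the suffix $x[\,|x|-\lambda+1,|x|\,]$ and the prefix $y[1,\lambda]$ are mutually inverse in $\Ext(A,G)$; equivalently, the length of the longest common prefix of the \Gred words $\overline x$ and $y$. For finite $\lambda$ this is the equality $y[1,\lambda]=\overline{x[\,|x|-\lambda+1,|x|\,]}$ in $G$, decided by the Word Problem (the instance $v=1$ of the hypothesis). Having found $\lambda$, I cancel the common part, clean up the remaining finite junction by $S_0$, and obtain a blocked boundary without increasing the factor count; cascading cancellations, where $y$ swallows $v_k$ and reaches $v_{k-1}$, are handled by iterating the subroutine, which terminates because the factor count strictly drops at each cascade.

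Computing $\lambda$ is the technical heart and proceeds by recursion on $e=\max(\deg x,\deg y)$. At $e=0$ both words are finite and the comparison is decided by the Word Problem. For $e\geq1$ the top-level agreement of $\overline x$ with $y$ is governed by their proper periods: by \reflem{lem:proper} the proper periods of a degree-$e$ word form a subgroup of $A^{\deg<e}\cong\Z^e$, and by \refprop{prop:proper} (through \reflem{ascendings}) the period subgroups met while descending into successively nested factors stabilize after finitely many levels. Once the comparison reaches a regime in which both sides are periodic with stabilized periods, whether the agreement persists --- and where it first breaks --- is an instance of cyclic membership: it reduces to deciding whether the periodic seed of one side lies in the cyclic subgroup generated by the period of the other, which the oracle $\cA(v)$ settles; the breaking position, and hence $\lambda$, is then read off.

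For correctness, the sweep makes at most $O(n)$ fold-ins together with at most $n$ cascade removals, so it halts and returns $w'$ with $w\RAS*S w'$ at whose every boundary no cancellation is possible even after an $S_0$-transformation; consequently no $\RAS{}{\BG}$-step can lower $\deg(w')$. If $\rdeg w<\deg(w')$ held, then confluence of $S$ (\refthm{thm:confl}, i.e.\ Church--Rosser) would produce a common descendant of $w'$ and some word of smaller degree, forcing a degree-lowering reduction out of $w'$ --- a contradiction. Therefore $\deg(w')=\rdeg w$, and the algorithm outputs both $w'$ and $\deg(w')$. I expect the main obstacle to be the periodic regime in the computation of $\lambda$: one must show the period subgroups stabilize effectively (this is \refprop{prop:proper} with \reflem{ascendings}) and, decisively, that the residual phase alignment of two stabilized periodic words is decidable from the cyclic membership oracle alone. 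This is the constructive counterpart of the necessity argument \refthm{neccond}, in which commuting with the periodic word $V=[vvv\cdots\;)(\;\cdots vvv]$ was shown to encode exactly the question ``$u\in\gen v$?''.
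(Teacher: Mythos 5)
Your architecture is genuinely close to the paper's Part II --- cancellation controlled by proper periods, cyclic membership invoked exactly where a finite word must be absorbed into a periodic tail, induction on degree --- but there is a concrete gap, which you flag yourself and then incorrectly dismiss. You write that the effective stabilization of period subgroups ``is \refprop{prop:proper} with \reflem{ascendings}''; it is not. Those two statements are pure Noetherianity (ascending chains in $\Z^k$ stabilize): they assert existence of a stabilization point but provide no algorithm and no computable bound. Your subroutine for the cancellation length $\lambda$ recurses on the period data of words that arise \emph{dynamically} during the sweep (remainders, folded products), and for these you have no effective way to compute $\Pi(\cdot)$ or to detect when the ``stabilized periodic regime'' has been reached. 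The paper's missing idea is a \emph{non-uniform preprocessing} (its Part I): since the theorem only demands an algorithm for each \emph{fixed} finite $\Delta$, one may first replace $\Delta$ by a normalized $\widehat\Delta$ using transformations that need not be effective --- closing under inverses, splitting any two generators sharing a dominant-degree common factor (which later forces a degree-lowering factor $u v \ov u$ to pair $g_i$ with $g_j = \ov{g_i}$), promoting proper periods of dominant factors to global periods of generators, and adjoining, for each generator $g$, fixed words $r(\beta), s(\beta) \in \widehat\Delta^*$ with $s(\beta)\,g = g\, r(\beta)$ realizing a fixed generating set $B(g)$ of $\Pi(g)$. All period groups and all words $r,s$ are thereby hardwired into the algorithm before any input is read, so effectivity of \refprop{prop:proper} is never needed.

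This non-uniformity matters a second time, at the oracle calls. The hypothesis gives an algorithm $\cA(v)$ \emph{for each} $v$, not a single algorithm uniform in $v$; hence your procedure may only ever query cyclic membership for finitely many $v$ determined in advance by $\Delta$. In your sweep the periods and ``seeds'' you feed to the oracle come from input-dependent words, and you give no argument that they range over a finite $\Delta$-determined set; the paper arranges exactly this, querying only ``$h \in \gen{r}$?'' where $r$ is the suffix of a generator $g_i$ of length the least element of the fixed set $B(g_i)$, with Rule~7 guaranteeing that the companion word $s$ with $s g_i = g_i r$ lies in $\widehat\Delta^*$, so that the rewrite $g_i r^m \ov{g_i} \to s^m$ keeps everything inside the inductive setting. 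Two smaller but real defects in the same direction: your equal-length definition of $\lambda$ misses the asymmetric cancellations that actually occur --- absorbing a finite middle factor $h$ into the periodic tail of $\ov{g_i}$ means testing $h = r^m$ in $G$ for \emph{unbounded} $m$, which is precisely a cyclic membership instance and not a bounded $S_0$-cleanup; and your final step (``all pairwise boundaries blocked, hence no $\RAS{}{\BG}$-step lowers $\deg(w')$'') is unproved, because a Big rule $u\ov u \to 1$ spans from inside $g_i$ across many factors to inside $g_j$, with the interior merely satisfying $v \RAS{*}{S} 1$; ruling this out from local blockedness is exactly what the paper's preprocessing-based case analysis (Rules 4--6 forcing $g_j = \ov{g_i}$, then the period argument reducing to $p\,g_{i+1}\cdots g_{j-1} \RAS{*}{S} 1$) is for.
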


\begin{proof}
The proof is split into two parts.
The first part is a preprocessing 
on the finite set $\Delta$. In the second part we 
present the algorithm for the set $\Delta$ after the preprocessing.
\bigskip

{\bf PART I: Preprocessing}

The preprocessing concerns $\Delta$ and not the actual algorithm.
Therefore it is not an issue  that the steps 
in the preprocessing are effective.
It is clear that we may replace  $\Delta$ by any other finite set
$\widehat\Delta$ 
such that $\Delta \subseteq \widehat\Delta^*$. 
This is what we do. 
We  apply the following transformation rules in any order as long as possible, and we stop if no rule changes $\Delta$  anymore. 
The result is $\widehat\Delta$ which is, as we will see, still a set of \Gred words. (This will follow from the fact that 
every  factor of a \Gred word is \Gred).

\begin{enumerate}[1.)]
\item Replace $ \Delta$ by $( \Delta \cup \GG) \setminus \oneset 1$.
(Recall that $\GG$ is finite in this section.)
\item If we have $g \in \Delta$, but  $\ov g \not\in \Delta$, then
insert $\ov g$ to $ \Delta$.
\item If we have $g \in \Delta$ with $g= fh $ in $W(A,\GG)$
and $\deg(g) = \deg(f) = \deg (h)$, then remove $g$ and $\ov g$ 
{}from  $\Delta$ and 
insert $f$ and $h$ to $ \Delta$.
\end{enumerate}

After these steps every element  in $\Delta$ has
its inverse in $\Delta$ and for some $d \in \N$ it has a length of the form 
$t^d +\ell$ with $\deg(\ell) < d$. Thus, the leading coefficient
is always 1. In particular, all generators of finite length are
letters of $\GG = \Sigma \cup \ov \Sigma$. 
The next rules  are more involved. We first define an equivalence 
relation on $W(A,\GG)$. We let $g \sim h$ if
for some $x,y,z,t$,  and $u$ in $W(A,\GG)$ with $\deg(xyzt) < \deg(u)$ we have 
$$ g = xuy \quad \mbox { and } \quad h = zut.$$
Note that the condition implies $\deg (g) = \deg(u) = \deg(h)$.
The effect of the next rule is that for each equivalence class there
is at most one group generator in $\Delta$.

\begin{enumerate}
\item[4.)] If we have $g, h \in \Delta$ with $ g \not\in \oneset {h, \ov h}$, 
but $ g = xuy$ and $ h = zut$ for 
some $x,y,z,t$,  and $u$ with $\deg(xyzt) < \deg(u)$, then 
remove $g, h, \ov g, \ov h$ {}from  $\Delta$ and 
insert $x,y,z,t$ (those which  are non-empty) and $u$ to $ \Delta$.
\end{enumerate}

\begin{enumerate}
\item[5.)] If we have $g \in \Delta$ with $ g \neq \ov g$, 
but $ g = xuy = z\ov u t$ for 
some $x,y,z,t$,  and $u$ with $\deg(xyzt) < \deg(u)$, then 
write $ u= pq$ with $\deg(p) < \deg(q)= \deg(u)$
and $q = \ov q$. 
Remove $g$ and $\ov g$ {}from  $\Delta$ and 
insert $x,y,z,t, p, \ov p$ (those which  are non-empty) and $q$ to $ \Delta$.
(Note that $g \sim q$.)
\end{enumerate}

The next  rules deal with periods. 

\begin{enumerate}
\item[6.)] If we have $g\in \Delta$ 
and  $ g = xuy$ for 
some $x,y$,  and $u$ with $\deg(xy) < \deg(u)$
such that $u$ has a proper period which is not a period of $g$, then 
remove $g, \ov g $ {}from  $\Delta$ and 
insert $x,y$ (those which  are non-empty) and $u$ to $ \Delta$.
\end{enumerate}

The following final  rule below makes $\Delta$ larger again, and the rule  adds additional
information to each generator. For each $g \in \Delta$ let 
$\Pi(g)\subseteq A$ the group of proper periods. Let
$B(g)$ be a  set of generators of $\Pi(g)$. 
We may assume that for each possible degree $d$ there is at most one
element $\beta \in B(g)$ of degree $d$. Moreover, we may assume 
$0 \leq \beta $ and for each $g$ the set $B(g)$ is fixed. In particular, for $\pi \in \Pi(g)$
with $\deg(\pi) = d \geq 0$ there is exactly one  $\beta \in B(g)$
such that $\deg(\beta) = d$ and $\pi = m \beta + \ell$ 
for some unique  $m\in \Z$ and $\ell \in \Pi(g)$ with 
 $\deg(\ell) < d$. For each $\beta \in B(g)$
let $r(\beta)$ be the prefix and $s(\beta)$ be the suffix of length $\beta$ 
of $g$. (In particular, $r(\beta)\,g= g\,s(\beta)$ in $W(A,\GG)$.) Note that the number of $r(\beta)$, $s(\beta)$ 
is bounded by $2\deg(g)$. 

\begin{enumerate}
\item[7.)] If we have $g\in \Delta$, 
then let  $B(g)$ be a set of generators 
for the set of proper periods $\Pi(g)$ as above. If necessary, enlarge
$\Delta$ by finitely many elements of degree less than $\deg(g)$ 
(and which are factors of elements of $\Delta$) such 
that  $r(\beta)$, $s(\beta) \in \Delta^*$ for all $\beta \in B(g)$.
\end{enumerate}

Note that the rules 1.) to 7.) can be applied only a finite number of times. The formal proof relies on K\"onig's Lemma and \refprop{prop:proper}.

\begin{remark}\label{rem:mainshort}
Note that the preprocessing has been  done in such a way that every
element in $\widehat\Delta$ is either a letter or a factor of an element
in the original  set $\Delta$. In particular, if $\Delta$ contains 
local geodesics only, then $\widehat\Delta$ has the same property. This fact is 
used for \refcor{cor:mainshort}. 
\end{remark}

\bigskip
{\bf PART II: An algorithm to compute the reduced degree}

We 
may assume that $\Delta$ has passed the preprocessing, i.e.,
$\Delta = \widehat \Delta$ and
no rule above changes $\Delta$ anymore. 
 The input $w$ (to the 
 algorithm we are looking for) is given as a word $g_1 \cdots g_n$ with $g_i \in \Delta$.
 Let $$d = \max\set{\deg(g_i)}{1 \leq i \leq n }.$$ 
 We may assume that $d>0$. Either 
 $\deg(w) = \rdeg w$ (and we are done) or $\deg(w) >\rdeg w$
 and $w \in W(A,\GG)$ contains a factor $uv\ov{u}$ such that 
 the following conditions hold: 
 
\begin{enumerate}[1.)]
\item The word $u $ is \Gred and has 
length $\abs u = t^d + \ell$ with $\deg (\ell ) < d$,
\item $\deg (v) < d$,
\item $ v \RAS *S 1$.
\end{enumerate}

We may assume that the factor $uv\ov{u}$ starts in some $g_i$ 
and ends in some $g_j$ with $i<j$,
because the leading coefficient of each length $\abs {g_i}\in \Z \bracket t$ is $1$. Moreover, by making $u$ smaller and thereby 
$v$ larger,
we may in fact assume that $u$ is a factor of  $g_i$ 
and $\ov{u} $ is a factor of $g_j$. Thus, $\deg(g_i) 
= \deg(u)= \deg(g_j)= d$ and
we can write $g_i = xuy$ and $g_j = z \ov u t$. By preprocessing on $\Delta$ (Rule 4),
we must have $g_i \in \oneset{g_j, \ov{g_j}}$.
Assume $g_i =g_j$, then
we have $g_i = xuy =  z \ov u t$ and, by preprocessing on $\Delta$ (Rule 5), we may conclude $g_i =\ov {g_i}$. Thus in any case we know $g_i =\ov {g_j}$.

Thus, henceforth we can assume that for some $1 \leq i<j \leq n$ 
 we 
have in addition to the above:

\begin{enumerate}
\item [4.)]$g_i = xuy$,
\item[5.)] $ v = y g_{i+1} \cdots g_{j-1} z$,
\item[6.)] $g_j = z \ov u t = \ov{g_i}$.
\end{enumerate}
Since $g_j = z \ov u t = \ov{g_i}$ we have $xuy=  \ov t u \ov z$, 
and by symmetry (in $i$ and $j$) we many assume:
\begin{enumerate}
\item [7.)] $\abs y \geq \abs z$.
\end{enumerate}
This implies $y  = q \ov z$ for some $q \in W(A,\GG)$
with $\deg(q) < d$ and $uq = q'u$ for $\ov t = xq'$. 

Therefore $\abs q$ is a proper period of $u$, and hence, 
 by preprocessing on $\Delta$ (Rule~6), 
 we see that  $\abs q$ is a proper period of $g_i$.
 Thus there are $p',p \in \Delta^*$ with $\abs {p'} = \abs p = \abs q$
 such that $p'g_i = g_i p$. But $ \ov z$ and $y$ are suffixes of $g_i$,
 hence 
 $$y = \ov z p.$$
 Therefore:
 \begin{enumerate}[8.)]
\item $ p g_{i+1} \cdots g_{j-1} \RAS *S 1$, where $p$ is a suffix of $g_i$ and 
$\abs p$ is a proper period of $g_i$.
\end{enumerate}

We know $ \deg(g_{i+1} \cdots g_{j-1}) < d$. Hence by induction 
on $d$ we can compute $h \in \Delta^*$ such that both 
$g_{i+1} \cdots g_{j-1} \DAS * S h$ and
$\deg(h) = \rdeg {g_{i+1} \cdots g_{j-1}} $. This implies
$\deg(h) = \rdeg p $, too. But $p$ is a factor of a \Gred word, 
hence actually $\deg(h) = \deg(p) $ by \reflem{karl}.

We distinguish two cases. Assume first that $\deg(h) \leq 0$.
Then $h, p  \in \GG^*$ are finite words. If $h=1$ in $G$, then 
we can replace the input word $w$ by 
$$ g_1 \cdots g_{i-1} g_{j+1} \cdots g_n$$
since $  g_i g_{i+1} \cdots g_{j-1}\ov {g_i}\RAS *S 1$, and we are done 
by induction on $n$. 

If $ h \in \GG^*$ is a finite word, but $h\neq 1$ in $G$, then
$p = h^{-1} \neq 1$ in $G$, too. Consider   the smallest element
$\rho \in B(g_i)$ and let 
$r\in \GG^*$ be  the suffix of $g_i$  with $\abs r = \rho$. It follows that $p$ is a positive power of $r$ because $\abs p$ is a  period of $g_i$. 
This means that $h$ is in the subgroup of $G$ generated by $r$. 
For this test we have an algorithm 
by our hypothesis on $G$. According to our assumptions 
the answer of the algorithm is \emph{yes}: $h$ is in the subgroup generated by $r$.
This allows to find $m\in \Z$ with $\ov h = r^m$ in the group $G$.
We find some finite word $s$ of length $\abs s = \abs{r^m} $ 
such that  $sg_i= g_i r^m$;  and we can replace the 
input word $w$ by $g_1 \cdots g_{i-1} \ov{s}  g_{j+1} \cdots g_n$, 
because we have: 
\begin{align*}
g_1 \cdots  g_{i}  \cdots g_j \cdots g_n
 & \DAS * S g_1 \cdots \ov{s} s g_{i}  \cdots g_j \cdots g_n\\
 & \DAS * S 
g_1 \cdots \ov{s} g_{i} r^m  h \ov {g_i} g_{j+1} \cdots g_n\\ 
&\RAS * S g_1 \cdots \ov{s} g_{i}  \ov {g_i} g_{j+1} \cdots g_n\\
& \RAS {} S g_1 \cdots g_{i-1} \ov{s}  g_{j+1} \cdots g_n. 
\end{align*}
We are done by induction on the number of generators of degree $d$. 

The final case is  $\deg(h) > 0$. We write $\abs h = m't^e + \ell$
with $\deg (\ell) < e = \deg(h). $
 According to our preprocessing on $\Delta$ (Rule 7) 
there are  words $r,s\in \Delta^*$ such that $\deg (r) = \deg (p)$,
$r$ is a suffix 
of $g_i$
with $sg_i= g_i r$. For some $m$ with $m \leq m' $ we must have 
$\rdeg {r^m h} < e$. By induction we can compute some word $f$ 
with $\deg(f) = \rdeg {r^m h}$ and $f \DAS* S {r^m h}$.
Like above we can replace the 
input word $w$ by 
$$g_1 \cdots g_{i-1} \ov{s}^m g_i f  g_{j} \cdots g_n,$$ 
because  we have: 
\begin{align*}
 g_1 \cdots \ov{s}^m s^m g_{i}  \cdots g_j \cdots g_n
 & \DAS * S
g_1 \cdots \ov{s}^m g_{i} r^m  h \ov {g_i} g_{j+1} \cdots g_n\\ 
&\DAS * S  g_1 \cdots \ov{s}^m g_{i} f \ov {g_i}g_{j+1} \cdots g_n.
\end{align*}
We are done by induction on the degree $e$ which is the 
reduced degree of the factor $r^mg_{i+1} \cdots g_{j-1}$. We can apply this 
induction since  $g_i r^mg_{i+1}\cdots  g_{j-1}g_j$
now has  a factor $uv\ov{u}$ such that 
 the following conditions hold: 
 \begin{enumerate}[1.)]
\item The word $u $ is \Gred and $\deg (u) = d> 0$,
\item $\deg (v) < e$,
\item $ v \RAS *S 1$.
\end{enumerate}
\end{proof}

By  Theorems~\ref{neccond} and~\ref{thm:decwp} 
we obtain the following corollary which gives the 
precise answer in terms of the group $G$ whether or not the 
\WP in finitely generated subgroups of $\Ext(A,G)$
is decidable.

\begin{corollary}\label{cor:main}
Let $G$ be finitely generated by $\GG$ and  $A=\Z\bracket{t}$. 
Then the following assertions are equivalent:

\begin{enumerate}[i.)]
\item  For each $v\in \GG^*$ there
is an algorithm which decides on input $u\in \GG^*$
the Cyclic Membership Problem "$u \in \gen{v}$?"

\item For each finite subset $\Delta \subseteq  W(A,\GG)$ there 
is an algorithm which decides  on input $w \in \Delta^*$
whether or not $w = 1 $ in the group $\Ext(A,G)$.
\end{enumerate}

\end{corollary}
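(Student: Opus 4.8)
The proof is a direct combination of \refthm{neccond} and \refthm{thm:decwp}, so the plan is to verify the two implications separately and to reconcile the slightly different formulations (finitely generated subgroups versus finite subsets $\Delta \subseteq W(A,\GG)$).

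For the implication \emph{ii.) $\Rightarrow$ i.)} I would argue as follows. A finitely generated subgroup $H$ of $\Ext(A,G)$ is generated by finitely many elements $h_1\lds h_k$, each represented by a word in $W(A,\GG)$. Putting $\Delta = \oneset{h_1, \ov{h_1}\lds h_k, \ov{h_k}}$, the image of $\Delta^*$ in $\Ext(A,G)$ is exactly $H$, and deciding $w = 1$ for $w \in \Delta^*$ is precisely the \WP for $H$. Hence assertion ii.) says that the \WP is decidable for every finitely generated subgroup of $\Ext(A,G)$, which is literally the hypothesis of \refthm{neccond}. That theorem then yields assertion i.) without any further work.

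For the implication \emph{i.) $\Rightarrow$ ii.)} I would first reduce to \Gred generators. Since $\Ext(A,G)$ is generated as a group by the \Gred words $R(A,G)$, we may assume each $g \in \Delta$ lies in $R^*(A,G)$, and so is a finite product of \Gred words; collecting all these factors produces a finite set $\Delta' \subseteq R(A,G)$ with $\Delta \subseteq (\Delta')^*$, so it suffices to decide the \WP on $(\Delta')^*$. Now I would invoke \refthm{thm:decwp}: assertion i.) is exactly its hypothesis (the non-uniform Cyclic Membership Problem in $G$), so on input $w$ it computes some $w' \in W(A,\GG)$ with $w \DAS * S w'$ and $\deg(w') = \rdeg{w}$. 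If $\deg(w') > 0$, then $\rdeg{w} > 0$ and hence $w \neq 1$ in $\Ext(A,G)$. Otherwise $\deg(w') \leq 0$, which over $A = \Zt$ forces $w'$ to be a finite word in $\GG^*$; since $G$ embeds into $\Ext(A,G)$ by \refcor{cor:embedG} and $\DAS * S$ preserves the represented group element, $w = 1$ in $\Ext(A,G)$ \IFF $w' = 1$ in $G$. The latter is decidable because the instance $v = 1$ of assertion i.) is nothing but the \WP of $G$. This decides assertion ii.).

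The corollary itself presents no real obstacle: all the substance sits in the two cited theorems, and the only points needing care are the two bookkeeping reductions above — identifying a finite subset $\Delta$ with the generating data of a finitely generated subgroup, and replacing $\Delta$ by its \Gred factors so that \refthm{thm:decwp} becomes applicable. The genuinely hard part, namely the periodicity analysis underlying the computation of the reduced degree, has already been carried out in the proof of \refthm{thm:decwp}, and the necessity of the cyclic-membership hypothesis in \refthm{neccond}.
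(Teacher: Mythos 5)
Your proposal is correct and takes essentially the same route as the paper, which derives \refcor{cor:main} directly from Theorems~\ref{neccond} and~\ref{thm:decwp} with no further argument; your two reductions (identifying a finite $\Delta$ with the generating data of a finitely generated subgroup for ii.)\,$\Rightarrow$\,i.), and factoring the elements of $\Delta$ into \Gred words so that \refthm{thm:decwp} applies, then handling the finite-word case via \refcor{cor:embedG} and the $v=1$ instance of cyclic membership) are exactly the bookkeeping the paper leaves implicit. Note only that your step ``we may assume each $g\in\Delta$ lies in $R^*(A,G)$'' silently fixes the same reading of assertion ii.) that the paper intends, since \refthm{thm:decwp} is stated for $\Delta\subseteq R(A,G)$ while ii.) literally quantifies over all finite $\Delta\subseteq W(A,\GG)$.
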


Recall that (according to 
Definition~\ref{def:short})
a local geodesic denotes word 
without any finite factor $f$ such that $f=g$ in $G$ but $\abs{g}<\abs{f}$.
Inspecting the proof above we find the following variant 
of \refcor{cor:main}. 

\begin{corollary}\label{cor:mainshort}
Let $G$ be finitely generated by $\GG$ and  $A=\Z\bracket{t}$. 
Then the following assertions are equivalent:

\begin{enumerate}[i.)]
\item  The group $G$ has a decidable \WP. 
\item For each finite subset $\Delta \subseteq  W(A,\GG)$ 
of  local geodesics there 
is an algorithm which decides  on input $w \in \Delta^*$
whether or not $w = 1 $ in the group $\Ext(A,G)$.
\end{enumerate}
\end{corollary}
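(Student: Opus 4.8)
The plan is to treat the two implications separately, reusing the algorithm built in the proof of \refthm{thm:decwp} for the harder direction and observing that the geodesic hypothesis removes the need for the cyclic-membership oracle.

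For ii.)~$\Rightarrow$~i.) I would specialize ii.) to $\Delta = \GG$. Each letter $a \in \GG$ is a local geodesic, since its only non-empty finite factor is $a$ itself and $a = 1$ in $G$ is excluded by the standing assumption $a \neq 1$; hence $\GG$ is a finite set of local geodesics. Then ii.) supplies an algorithm deciding $w = 1$ in $\Ext(A,G)$ for every $w \in \GG^*$, which by \refcor{cor:embedG} is precisely the \WP of $G$.

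For i.)~$\Rightarrow$~ii.) I would run the algorithm of \refthm{thm:decwp} essentially verbatim and show that, under the geodesic hypothesis, its one appeal to cyclic membership becomes a bounded family of \WP queries. By \refrem{rem:mainshort} the preprocessing of Part~I preserves the property ``local geodesic'' (every element of $\widehat\Delta$ is a letter or a factor of an element of $\Delta$, and factors of local geodesics are local geodesics), so we may assume $\Delta=\widehat\Delta$ consists of local geodesics. The only step consulting the oracle is the case $\deg(h)\le 0$, $h \neq 1$ in $G$, where one must exhibit $m$ with $\ov h = r^m$ in $G$, $r$ being the length-$\rho$ suffix of $g_i$ and $\rho$ the smallest positive proper period of $g_i$ (finite here). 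Here the geodesic hypothesis bounds $m$: the suffix $p = r^m$ of the local geodesic $g_i$ is again a local geodesic, so by \refdef{def:short} no word equal to $r^{-m}=\ov p$ in $G$ can be shorter than $m\rho$; as $h$ represents $r^{-m}$ this forces $\abs h \ge m\rho$, i.e. $m \le \abs h/\rho$. I would therefore test $h\,r^{m} = 1$ in $G$, for each $m$ in the finite range $0 \le m \le \abs h / \rho$, using decidability of the \WP. The structural argument inside \refthm{thm:decwp} guarantees that a suitable $m$ exists exactly when the factor $g_i\, g_{i+1}\cdots g_{j-1}\,\ov{g_i}$ collapses, and any such $m$ permits the very same replacement of the input word. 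All other cases of Part~II (namely $h = 1$ in $G$, and $\deg(h)>0$) already use only the \WP together with bounded searches over exponents, so no further oracle is needed; feeding the outcome ``$\rdeg w = 0$ and $w'=1$ in $G$'' through \refcor{cor:embedG} then decides $w=1$ in $\Ext(A,G)$.

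The step I expect to be the main obstacle is certifying the length bound $m\le \abs h/\rho$, that is, verifying that $p=r^m$ really is a local geodesic and that the concretely computed word $h$ represents $r^{-m}$. This rests on combining the period data recorded by preprocessing Rules~6 and~7 with \refprop{prop:shortest}, which ensures that $S$-reductions of local geodesics are length-preserving and proceed through $S_0$ alone. Once this bookkeeping is in place, the reduction of cyclic membership to a bounded family of \WP instances is routine, and the remainder of the argument coincides with the proof of \refcor{cor:main}.
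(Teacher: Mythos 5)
Your proposal is correct and takes essentially the route the paper intends: \refcor{cor:mainshort} is presented there as a variant obtained by ``inspecting the proof'' of \refthm{thm:decwp}, with \refrem{rem:mainshort} supplying exactly the fact you invoke (preprocessing preserves local geodesics), and your bound $m \leq \abs{h}/\rho$ --- coming from $p = r^m$ being a suffix, hence a finite factor, of the local geodesic $g_i$, so that $\abs{h} \geq \abs{p} = m\rho$ by \refdef{def:short} --- is precisely the observation that replaces the one cyclic-membership query in the case $\deg(h) \leq 0$, $h \neq 1$ by finitely many \WP queries. Your easy direction via $\Delta = \GG$ and \refcor{cor:embedG} likewise matches the paper.
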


\begin{remark}\label{rem:sapir}
Clearly, Condition i.) in Corollary~\ref{cor:main} implies
Condition i.) in Corollary~\ref{cor:mainshort}, but the converse fails.
There is a finitely presented 
group $G$ with a decidable \WP, but one can construct  a specific
word $v$ such that the Cyclic Membership Problem "$u \in \gen{v}$?" is undecidable, see \cite{olsap00,olsap01}.
\end{remark}

\begin{remark}\label{cyclicmemb}
Let $G$  be a finitely generated group. Of course, if $G$ has a decidable 
\emph{\gwp,} i.e., the 
Membership Problem w.r.t.{} finitely generated subgroups is decidable, then 
the Cyclic Membership Problem "$u \in \gen{v}$?"
is decidable, too. Examples of groups $G$ where the \gwp is decidable
include metabelian, nilpotent or, more general, abelian by nilpotent groups, 
see \cite{romanovskii}. However, there are also large classes of 
groups, where the Membership Problem is undecidable, but the Cyclic Membership Problem is  easy. For example, the Cyclic Membership Problem is 
decidable in linear time in a direct product of free groups, but
as soon as $G$ contains a direct product of free groups of rank 2, the \gwp 
becomes undecidable
by \cite{Mihailova58}. For hyperbolic groups a construction of Rips shows that the \gwp is 
undecidable (\cite{rips82}), but the Cyclic Membership Problem "$u \in \gen{v}$?"
is decidable by \cite{Lys90}. 

Decidability of the the Cyclic Membership Problem is also preserved
e.g. by effective HNN extensions. This means, if $H$ is an HNN-extension of
$G$ by a stable letter $t$ such that we can effectively compute Britton reduced forms, then one can reduce
the Cyclic Membership Problem "$u \in \gen{v}$?" in $H$ to the same problem in 
$G$ as follows. On input $u,v$ we compute first the  Britton reduced form of $v$.
This tells us whether $v\in G$. If so, we are done by checking first that $u\in G$
and then by using the algorithm for $G$. So, let $v\in H\sm G$.
Via conjugation we may assume that $v^k$ remains Britton reduced for all $ k \in \Z$.
Now, if $u$ is Britton reduced, too, then it is enough to check $u = v^k$
for that $k$ where the $t$-sequence of $u$ coincides with the one of $v^k$.
There is at most one such $k$. Thus we can use the algorithm to decide the
\WP in $H$ which exists because we can effectively compute Britton reduced forms. 

As every one-relator group $G$ sits inside an 
effective HNN extension of another one-relator group with a shorter relator
\cite{LS}, 
we see that the Cyclic Membership Problem is decidable in one-relator groups, too. 
The property is also preserved by effective amalgamated products for a similar 
reason as for HNN extensions. 
\end{remark}
%


\section{Realization of some HNN-extensions}\label{sec:HNN}
The purpose of this section is to show that the group $\Ext(A,G)$
contains some important HNN-extensions of $G$ which therefore can be studied 
within the framework of infinite words. Moreover, we show that $\Ext(A,G)$
realizes more HNN-extensions than it is possible in the 
approach of  \cite{MRS05}. The reason is that \cite{MRS05}
is  working with cyclically reduced decompositions, only. 
We begin with this concept and we show first how it embeds in our setting.
 
\subsection{Cyclically decompositions for freely reduced words}\label{sec:cdr}
In \cite{MRS05} a partial multiplication on freely reduced words 
and a partial monoid $\cdr(A,\SS)$ has been defined 
for a free group $F(\SS)$:  
Let $x,y \in R(A,F(\SS))$ be freely reduced words. 
The partial multiplication $x * y$ is defined \IFF $ x = p q$, $y =\ov q r$, and
 $pr$ is freely reduced. In this case  $x * y = pr$. 

As a set  $\cdr(A,\SS)$ consists of those freely reduced words $x$, which admit a \emph{cyclically reduced decomposition}
 $x = cu\ov c$ where $u$ is cyclically reduced. 
 If the decomposition exists, it  is unique. Note that 
 $c = [aaa\cdots\;)(\;\cdots \ov{a}\ov{a}\ov{a}]$ is freely reduced, but it is not
 in $\cdr(A,\SS)$. On the other hand, for $a \neq b \in \SS$ we have 
 $$x = [aaa\cdots\;)(\;\cdots \ov a \ov a \ov a b aaa \cdots\;)(\;\cdots \ov{a}\ov{a}\ov{a}] \in \cdr(A,\SS)$$ since $x = c b \ov c$.

 In terms of the group $\Ext(A,F(\SS))$ we can rephrase this as follows. 
  The set $\cdr(A,\SS)$ embeds into $\Ext(A,F(\SS))$ because all elements are freely reduced and hence
  irreducible by the confluent system $S$. Now,
  $\cdr(A,\SS)$ (being a subset of a group) becomes a partial monoid by restricting
  the definition of  $x * y$ to the  case where $x * y = xy$ is defined and
   $xy \in\cdr(A,\SS)$. If  $xy \notin\cdr(A,\SS)$, then the result $x * y$
   remains undefined. 
   
   Now, assume $x,y$, and 
 $xy \in \cdr(A,\SS)$. Then there exists a freely reduced word  $z = cw\ov c$ where $w$ is cyclically reduced such that $xy \RAS*S z$. The reduction provides us with 
 a factorization such that $x = p q$, $y =\ov q r$, and
 $pr$ is freely reduced. Thus, $x * y$ is defined. In this way the partial monoid 
 $\cdr(A,\SS)$ embeds naturally into the group $\Ext(A,F(\SS))$. 
 
 Let $a,b \in \SS$ with $a \neq b$. It is known that the HNN-extension of $F(\SS)$ by 
 $sbs^{-1}= a$ with stable letter $s$ embeds into $\cdr(A,\SS)$
 with $s =  [aaa\cdots\;)(\;\cdots bbb]$. To see this, observe that
 this HNN-extension can be written as a semi-direct product $F(a,b)\rtimes \Z$.
 This allows to write elements in normal form as a word $x= w\cdot s^k$
 where $w$ is a freely reduced word over $\SS^{\pm}$ and $k \in \Z$. 
 A direct inspection shows that $x$ is in $\cdr(A,\SS)$ and it is trivial
 $\Ext(A,F(\SS))$ \IFF it is trivial in $F(a,b)\rtimes \Z$.

 However, 
  the HNN-extension $H$ of $G$ by 
 $sb^2s^{-1}= a^2$  does not embed into $\cdr(A,\SS)$ because the 
 commutation relation $\sim$ is not transitive, but it is known to be transitive in any
 finitely generated subgroup of $\cdr(A,\SS)$, \cite{bass91}. The 
 commutation relation  is not transitive in $H$, because 
 $a \sim a^2 = sb^2s^{-1} \sim sbs^{-1}$, but $a \not\sim sbs^{-1}$ in $H$. 
 
 The group  $\Ext(A,F(\SS))$ is however large enough to realize the HNN extension $H$, but we have to leave $\cdr(A,\SS)$:  
 Define 
 $$s=  [aaa\cdots\;)(\;\cdots ababab \cdots\;)(\;\cdots bbb].$$
 Then the canonical \homo $H\to \Ext(A,F(\SS))$ is an embedding. (See \refprop{HNN}.)
 Note that $sb^2s^{-1}= a^2$, but $sbs^{-1}\neq  a$ due to the middle line of
 $ab$'s which requires a shift by 2 in order to be matched. Clearly, 
 $s, b, \ov s \in \cdr(A,\SS)$ and $s' = s* b\in  \cdr(A,\SS)$ is defined. 
 But $s' * \ov s$ is not defined, and therefore $sbs^{-1} \in \Ext(A,F(\SS))\sm \cdr(A,\SS)$. (Note that $s\cdot b\cdot  s^{-1}$ is not a cyclically reduced decomposition, because $sb \ov s$ is not freely reduced and 
there is no freely reduced word $x$ such that $x = sb s^{-1} \in \Ext(A,F(\SS))$.)
The element $s'\ov s = s b \ov s$ can be depicted as follows: 
 $$sb\ov s =  [aaa\cdots\;)(\;\cdots ababab \cdots\;)(\;\cdots bbb]
 [\ov b \ov b \cdots\;)(\;\cdots bababa \cdots\;)(\;\cdots bb]b.$$

\begin{remark}\label{split}
Let $H$ be a subgroup inside the partial monoid $\cdr(A,\SS)$, then 
$H$ is torsion-free. Indeed $(c u \ov c)^2 = c u^2 \ov c$ and we can use 
\refprop{torsion}. Since $sbs^{-1}$ is torsion-free and $sbs^{-1} \in \Ext(A,F(\SS))\sm \cdr(A,\SS)$
for $s$ and $b$ as above, we see that the set of torsion elements is a 
proper subset of $\Ext(A,F(\SS))\sm \cdr(A,\SS)$, in general. 
\end{remark}

We conclude this subsection with a few more examples which allow similar
calculations as above. In these examples we use however stable letters 
which have no cyclically reduced decomposition.

\begin{example}\label{ex:semidirect}
Consider the
following non-abelian semi-direct products: 
$G_1= \Z\rtimes (\Z/ 2 \Z) $ (which is isomorphic to the free product 
$\Z/ 2 \Z * \Z/ 2 \Z$) and $G_2= \Z\rtimes \Z$. 
(which is isomorphic to the Baumslag-Solitar group $\BS(1,-1)$.)
The groups $G_1$ and $G_2$ can be embedded 
into $\Ext(\Z\times \Z, \Z)$.
Indeed, define  $s_1$ and $s_2$ as follows: 
\begin{align*}
s_1 &= [aaa\cdots\;)(\;\cdots \ov a \ov a \ov a],\\
s_2 &= [aaa\cdots\;)(\;\cdots aaaa \cdots\;)(\;\cdots \ov a \ov a \ov a].
\end{align*}
The element $s_1$ has order 2 and $s_2$ has infinite order 
in $\Ext(\Z\times \Z, \Z)$. 
Clearly $a s_i = s_i \ov a$, and it is easy to verify
that the subgroups generated by $a$ and $s_i$ are isomorphic to $G_i$ 
 for $i= 1,2$.

Let $\SS \geq 2$ and let $G_3$ be  the HNN-extension of $\Z$ 
with stable letter $s$ and defining relation 
$s^{-1}a^2 s = a^{-2}$.
The group $G_3$ is also the Baumslag-Solitar group $\BS(2,-2)$.
It embeds into $\Ext(\Z\times \Z, F(\SS))$ using $s_3$ as a stable letter, where
\begin{align*}
s_3 &=  [aaa\cdots\;)(\;\cdots ababab \cdots\;)(\;\cdots \ov a \ov a \ov a].
\end{align*}
Again, a direct verification that this group embeds
is not difficult. 
All three embeddings
 occur as  special cases of \refprop{HNN}. None of these 
groups can be embedded into the partial monoid  $\cdr(A,\SS)$: 
The group $G_1$ is not torsion free and the commutation relation 
is not transitive neither in $G_2$ nor in $G_3$.
\end{example}

\subsection{Some HNN-extensions in $\Ext(A,G)$}\label{sec:hnn}
We continue with the assumption that $A = \Z[t]$.  
In \cite{MRS05} a  power $x^t$ with length $\abs{x}\cdot t $
is constructed for  $x \in \cdr(A,\SS)$. (The partial monoid $\cdr(A,\SS)$
has been defined in \refsec{sec:cdr}.) The construction of $x^t$
fails however to satisfy ${\ov x}^t  = \ov{x^t}$, in general. Thus, $x^t$ 
cannot be used to define an HNN extension with stable letter $x^t$.
We content ourselves to prove the following fact. 
\begin{proposition}\label{abel}
Let $x \in W(A,G)$ be  a non-empty cyclically \Gred word.
Then we can define a free abelian subgroup $X$ of $\Ext(A,G)$ with countable  basis $\set{x_d}{d \in \N}$ such that $x_0 = x$.  Hence, the \homo   
$${a_0 + a_1t + \cdots + a_nt^n} \mapsto x^{a_0}(x_1)^{a_1} \cdots (x_n)^{a_n}$$
embeds the abelian group $A$ into $\Ext(A,G)$.
\end{proposition}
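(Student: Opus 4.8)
The plan is to realise each basis element $x_d$ as a highly periodic \Gred word of length $\alpha t^d$, where $\alpha = |x|$ and $c = \deg(\alpha)$, arranged so that the lower $x_{d'}$ occur as both prefixes and suffixes of $x_d$. Commutativity will then follow at once from the period identity, and independence from confluence together with \reflem{karl}.

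First I would construct the $x_d$ recursively, starting from $x_0 = x$. Suppose $x_0 \lds x_d$ have been built so that $x_d$ has length $\alpha t^d$, each $\alpha t^{d'}$ with $0\le d'<d$ is a proper period of $x_d$, and for every such $d'$ both the prefix and the suffix of $x_d$ of length $\alpha t^{d'}$ equal $x_{d'}$. To obtain $x_{d+1}$ I prescribe its prefix and suffix of length $\alpha t^{d}$ to be $x_d$, impose $\alpha t^{d}$ as a period (hence, by \reflem{lem:proper}, the whole group generated by $\alpha,\alpha t \lds \alpha t^{d}$), and fill the remaining positions — those in the degree-$(c+d+1)$ band reachable from neither end by these periods — by a \Gred pattern. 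Such a filling exists by the method of \reflem{lem:totallang}: distinct bands lie at different degrees, so no finite factor meets two of them, and it suffices to fill each band so that no finite factor equals $1$ in $G$. Since $x$ is cyclically \Gred, every finite factor of $x_{d+1}$, and of every power $x_{d+1}^k$, is either a factor of a power of $x$ or lies in a filled band; hence $x_{d+1}$ is cyclically \Gred and, by \refthm{thm:confl}, non-trivial in $\Ext(A,G)$. The nesting $x_{d'} = x_{d}[1,\alpha t^{d'}]$ exhibits the whole family as restrictions of one common periodic pattern, which is what makes the later cancellations telescope.

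Commutativity is now short. Fix $d<e$. By construction $\pi = \alpha t^{d}$ is a proper period of $x_e$, and both the prefix $r$ and the suffix $s$ of $x_e$ of length $\pi$ equal $x_d$. The elementary period identity $r\,x_e = x_e\,s$ (the relation $r(\pi)g = g\,s(\pi)$ recorded in \refsec{sec:wp}) gives $x_d\,x_e = x_e\,x_d$ already in $W(A,\GG)$, a fortiori in $\Ext(A,G)$. Thus $\set{x_d}{d\in\N}$ generates an abelian subgroup $X$. Since $A = \oplus_{d}\Z t^d$ is free abelian on $\set{t^d}{d\in\N}$, the assignment $t^d\mapsto x_d$ extends to a homomorphism $\phi:A\to\Ext(A,G)$, and it remains to show $\phi$ is injective, i.e.\ $\phi(\pi)\neq 1$ for $\pi\neq 0$. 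Write $\pi=\sigma-\tau$ with $\sigma,\tau\ge 0$ of disjoint support; commutativity gives $\phi(\pi)=x^{[\sigma]}\,\ov{x^{[\tau]}}$, where $x^{[\rho]}$ denotes the positive product of the $x_d$ corresponding to $\rho\ge 0$, a \Gred word of length $\alpha\rho$ by the nesting. Reducing $x^{[\sigma]}\,\ov{x^{[\tau]}}$ by $S$ and using confluence (\refthm{thm:confl}) exactly as in \refcor{cor:embedG} and in Example~\ref{ex:conj}, the only cancellations are of the common central pattern, so the word reduces to $x^{[\pi]}$ (resp.\ $\ov{x^{[-\pi]}}$), which by \reflem{karl} has reduced degree $c+\deg(\pi)\ge 0$; when this degree is $0$ the reduced word is a non-trivial power of the finite word $x$. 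In either case $\phi(\pi)\neq 1$, so $\phi$ embeds $A$ and $X$ is free abelian with basis $\set{x_d}{d\in\N}$ and $x_0=x$.

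The main obstacle is the first step: producing, for each $d$, a single \Gred word carrying all the periods $\alpha t^{0}\lds \alpha t^{d-1}$ with the prescribed nested prefixes and suffixes, and verifying that the freely chosen fillings never create a finite factor equal to $1$ in $G$. Once this construction is in place, \reflem{lem:proper}, \refprop{prop:proper}, the period identity, and \reflem{karl} make commutativity and independence essentially automatic.
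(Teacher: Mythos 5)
Your overall skeleton---nested periodic words $x_d$, commutativity via the period identity $r\,g = g\,s$, injectivity via degrees, \reflem{karl} and confluence---is the same as the paper's, but the recursive construction step has a genuine gap, located exactly at the parenthetical ``(hence, by \reflem{lem:proper}, the whole group generated by $\alp,\alp t\lds \alp t^{d}$)''. Your commutation argument for $x_{d'}$ and $x_{e}$ needs $\alp t^{d'}$ to be a period of $x_{e}$ at \emph{every} pair of positions, in particular inside the freely filled bands. When you build $x_{d+1}$ you impose only the period $\alp t^{d}$ there, and \reflem{lem:proper} cannot conjure the lower periods: it says that the periods a word \emph{actually has} form a group, but a band is a union of $\Z\alp t^{d}$-orbits at infinite distance from both ends, so the prefix/suffix prescriptions never constrain it, and a filling subject only to $\alp t^{d}$-periodicity is otherwise arbitrary. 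In particular the projection device of \reflem{lem:totallang} (values $p(n_0)$ depending on the $t^0$-coordinate) is invariant under adding $\alp t^{d}$ for $d\geq 1$ but \emph{not} under adding $\alp$ itself, whose constant coefficient shifts $n_0$; so already $\alp$ fails to be a period of $x_{d+1}$ on the band, your inductive hypothesis collapses, and the identity $x_{d'}x_{d+1}=x_{d+1}x_{d'}$ fails in $W(A,\GG)$ for $d'<d$: the two concatenations differ at a band position $\bet$ precisely when $x_{d+1}(\bet)\neq x_{d+1}(\bet+\alp t^{d'})$. This is not merely a failure of the proof: for $G=F(\SS)$ both concatenations are freely reduced, so by \refcor{freeredS} they represent genuinely distinct elements of $\Ext(A,G)$.

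The repair forces you into the paper's construction. To keep commutativity, the band filling must itself carry the whole group of periods generated by $\alp, \alp t,\lds \alp t^{d}$; equivalently, on each band the word must be the $\alp$-periodic closure of a window of length $\alp$ whose bi-infinite repetition is \Gred---and the one such window you are guaranteed to have is $x$ itself. Filling every band with the $x$-pattern collapses the recursion into the paper's one-shot definition: extend the powers of $x$ to a partial word $x^{\Z}$ on $D=\set{\del\in A}{\deg(\del)\leq e}$ (where $e=\deg(x)$), set $x_A(\bet)=x^{\Z}(\del)$ for $\bet=t^{e+1}\gam+\del$, and let $x_d$ be the restriction of $x_A$ to $[1,t^{e+d}]$; then all the periods, the nesting, commutativity in $W(A,\GG)$, and cyclical \Gred-ness are immediate, and the only computation left is $\ov{x_d}=(\ov{x})_d$, which is what controls negative exponents. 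Finally, a smaller slip: your telescoping endpoint ``$x^{[\pi]}$'' is undefined whenever $\pi=\sigma-\tau$ has coordinates of both signs, which is the typical case of disjoint supports. Replace that step by the paper's induction on $n=\deg f$: if $a_n\neq 0$ then $x_n^{a_n}$ (or its inverse, using $\ov{x_n}=(\ov{x})_n$) is represented by a cyclically \Gred word of degree $e+n$, while $\phi(f-a_nt^n)$ has reduced degree strictly less than $e+n$, so \reflem{karl} and confluence (\refthm{thm:confl}) force $a_n=0$.
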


\begin{proof}
 Let $\deg(x) = e \geq 0$ and $\abs{x} = \alp$. 

For $k \in \N$ consider $x^{2k}$ as a mapping $x^{2k}:[-k \alp +1 ,k\alp]\to \GG$. We can extend this to a partial (non-closed) word 
$x^{\Z}:D\to \GG$, where the domain is $D= \set{\del \in \Z[t]}{\deg(\del) \leq e}$. Note that $\ov{x}^{\Z}(\del) = x^{\Z} (-\del+1)^{-1}$ for $\del \in D$.

We define $x_{A}: A \to \GG$ as follows: We let
$x_{A}(\bet) = x^{\Z}(\bet)$ for $\deg(\bet) \leq e$. 
For  $\deg(\bet) > e$ write $\bet = t^{e+1} \gam + \del$  with  $\del \in D$; and let $x_{A}(\bet) = x^{\Z}(\del)$. 

Finally, let $x_0 = x$ and  for every $d\geq 1$ let 
$x_d$ be the restriction of $x_{A}$ to the closed interval $[1,t^{e+d}]$.

The word $x_d$ has length $t^{e+d}$ and $\abs{x}$ is a proper period. We have to show that  
 $\ov {x_d}  = (\ov{x})_d$ for $d\geq 1$. To see this, consider $1 \leq \bet \leq t^{e+d}$ and write $\bet = t^{e+1} \gam + \del$ with $\del \in D$. Then: 
 \begin{align*}
\ov {x_d}(\bet)   &=  {x_d}(t^{e+d} -t^{e+1} \gam - \del +1)^{-1} \\
                  &=  {x^{\Z}}( - \del +1)^{-1}\\
                  &=  \ov{x}^{\Z}(\del)\\
                  &=  (\ov{x})_d(\bet)
\end{align*}

Thus, ${a_0 + a_1t + \cdots + a_nt^n} \mapsto x^{a_0}(x_1)^{a_1} \cdots (x_n)^{a_n}$ is a \homo of abelian groups.

 Assume $f(t) = {a_0 + a_1t + \cdots + a_nt^n} \mapsto 1 \in \Ext(A,G)$, then 
 $a_n = 0$ due to the degrees and the fact that $x$ is a cyclically \Gred word.
 By induction $f(t) = 0$. 
\end{proof}

We say that a non-empty word $w \in W(A,G)$ is \emph{primitive} if 
first $w$ does not appear as a  factor of $ww$ other than  as its prefix or as its and second $\ov w$ is not a factor of $ww$.
In particular, a primitive word does not have any  non-trivial proper period.
If on the other hand, we we can write $ww = pwq$  with $1 \leq \abs p < \abs w$, then 
 $\abs p$ is a non-trivial period of $w$.
 Note that the word $w$  which 
looks like 
 $[ababab\cdots\;)(\;\cdots ababab]$ has period 2, it is not primitive, but it is no power
 of any other element. Hence, unlike to the case of finite words, being primitive is 
 a stronger condition than not being a power of any other element.\footnote{A power is an element $u^k$ for $k \in \Z$ since we have not defined $u^\alp$ for $\deg(\alp) > 0$. 
 However even in a more general context the assertions remain true: assume $w$ and $w'$ look like $[ababab\cdots\;)(\;\cdots ababab]$ with
$w = (ab)^\alp$  and  $w' = (ab)^\bet$, where $\abs w = t$ and 
 $\abs {w'} = t+1$. Then we should expect that $(ab)^{\bet - \alp}$ is a power
 of $ab$, but this is not compatible with $\abs{(ab)^{\bet - \alp}}=1.$}

Note also that $ww = p\ov w q$ means that we can write $w = pq$ with 
$\ov p = p$ and $\ov q = q$. It follows that  
$w$ is primitive \IFF $\ov w$ is primitive. 
For a non-abelian free group 
 $F(\SS)$ primitive cyclically reduced words of every positive length exist: Consider $w$ with $w(1)= a$ and $w(\bet)= b$ otherwise.

 Let $H$ be a subgroup of $\Ext(A,G)$ and $u \in H$ be a cyclically \Gred element. As usual the \ei{centralizer} of $u$ in $H$ is the subgroup 
 $\set{v\in H}{uv = vu}$. 
\begin{proposition}\label{HNN}
Let $H$ be a finitely generated subgroup of $\Ext(A,G)$ and let 
$u$, $v$, $w \in H$ be (not necessarily different) cyclically \Gred elements such that $\abs{u} = \abs{v} =\abs{w}$
and such that $w$ is primitive. In addition, let $u$ and $v$ have 
cyclic centralizers in $H$. 
Then the HNN extension
$$H' = \gen{H,t\mid s^{-1} u s = v} $$
embeds into $\Ext(A,G)$. 
\end{proposition}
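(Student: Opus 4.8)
The plan is to build an explicit stable letter $s\in\Ext(A,G)$ of degree one higher than $u,v,w$, verify the defining relation by a direct periodicity computation, and then establish the embedding through Britton's Lemma by analysing cancellations inside the confluent system $S$. To construct $s$, write $\alp=\abs u=\abs v=\abs w$ and $e=\deg(\alp)$. Using the periodic extensions supplied by \refprop{abel}, I would extend $u,v,w$ to $u$-, $v$- and $w$-periodic partial maps $A\to\GG$ and glue them into one closed word $s$ of degree $e+1$ whose three consecutive $t^{e+1}$-levels are, respectively, $u$-periodic, $w$-periodic and $v$-periodic; schematically $s=[\,uuu\cdots\,)(\,\cdots www\cdots\,)(\,\cdots vvv\,]$. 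Since a finite factor cannot straddle a gap of degree $e+1$, every finite factor of $s$ lies inside a single level, so $s$ is \Gred because $u,v,w$ are cyclically \Gred; in particular $s\neq1$ by \refthm{thm:confl}.

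Next I would check $us=sv$ (equivalently $s^{-1}us=v$) as an identity of words in $W(A,\GG)$. Prepending $u$ shifts $s$ by $\alp$, and since $\alp=\abs u=\abs w=\abs v$ is a period of each of the three levels, this shift is absorbed inside the $u$-level on the left exactly as appending $v$ is absorbed inside the $v$-level on the right, while the bi-infinite $w$-level is invariant under a shift by its own period. Hence $us$ and $sv$ agree letter by letter, so $\pi^A$ extends to a homomorphism $\phi\colon H'\to\Ext(A,G)$.

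The real work is the injectivity of $\phi$, which I would prove through Britton's Lemma: a reduced sequence $\xi=g_0s^{\eps_1}g_1\cdots s^{\eps_n}g_n$ with $n\geq1$, $g_i\in H$ and no pinch must satisfy $\phi(\xi)\neq1$. I would reduce the representing word in $W(A,\GG)$ by $S$ and keep track of the degree-$(e+1)$ $w$-periodic cores carried by the $n$ stable letters. The crucial cancellation lemma would state that at a junction $s^{\eps}g\,s^{-\eps}$ a cancellation reaching degree $e+1$ can occur only at a pinch, i.e. only when $g\in\gen u$ (for $\eps=-1$) or $g\in\gen v$ (for $\eps=+1$): such a deep cancellation would force $g$ to share the proper period $\alp$ with $u$ (resp.\ $v$) up to degree $e+1$, and by \refprop{prop:proper} together with the hypothesis that $C_H(u)$ and $C_H(v)$ are cyclic this forces $g$ to commute with $u$ (resp.\ $v$), hence to be a power of it. At a same-sign junction $s^{\eps}g\,s^{\eps}$ the adjacent ends carry the \emph{distinct} periods of $u$ and $v$, so no deep cancellation is possible, and the primitivity of $w$ guarantees that a $w$-periodic core can only be annihilated by its own mirror image, ruling out any rerouting of cancellation through the middles. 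Consequently, for a reduced $\xi$ at least one $w$-core survives, so the reduced degree of $\phi(\xi)$ is at least $e+1$, in particular positive, and \reflem{karl} together with \refthm{thm:confl} give $\phi(\xi)\neq1$.

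The main obstacle is exactly this cancellation lemma, namely making rigorous the implication ``deep cancellation across a stable letter $\Rightarrow$ pinch''. The delicate points are that $H$ may itself contain elements of degree $\geq e+1$, so a naive ``$s$ carries the top degree'' argument fails, and that one must exclude cancellations threading through the $w$-periodic cores; both are to be handled by the period-stabilization of \refprop{prop:proper}, the primitivity of $w$, and the cyclic-centralizer hypothesis on $u$ and $v$, which together pin down precisely when two periodic regions can annihilate each other.
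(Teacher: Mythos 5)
Your overall strategy matches the paper's (an explicit periodic stable letter, the relation $us=sv$ checked by a period/shift computation, then Britton's Lemma plus a cancellation analysis driven by primitivity of $w$ and the cyclic centralizers), but there is a genuine gap, and you have located it yourself without closing it: the ``deep cancellation $\Rightarrow$ pinch'' lemma is only stated, and with your construction it is not provable by the tools you cite. Your stable letter $s$ has degree $e+1$, so elements of $H$ of degree $\geq e+1$ can themselves carry top-degree periodic blocks that interact with the $u$-, $w$-, $v$-levels of $s$; neither \refprop{prop:proper} nor the centralizer hypothesis controls this. The paper's key move --- which your proposal never uses --- is the \emph{finite generation of $H$}: choose $d>e$ with $\deg(x)<d$ for \emph{all} $x\in H$, build $U=u_{d-e-1}$, $V=v_{d-e-1}$, $W=w_{d-e-1}$ via \refprop{abel}, and set $s=[UUU\cdots\;)(\;\cdots WW\cdots\;)(\;\cdots VVV]$ of length $2t^{d}$. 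Then any factor $\ov x z x$ with $\deg(z)<\deg(x)=\deg(s)$ witnessing triviality must be anchored in the stable letters, since no $y_i\in H$ reaches degree $d$; this is exactly what makes the junction analysis tractable, and it is why the f.g.\ hypothesis appears in the statement at all.

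A second concrete flaw: at a same-sign junction you argue that ``the adjacent ends carry the \emph{distinct} periods of $u$ and $v$,'' but the hypothesis explicitly allows $u=v$ (and even $u=\ov v$), and in any case the \emph{period} in the sense of \refprop{lem:proper} is the single element $\abs u=\abs v\in A$, so no distinctness is available. The paper instead rules out equal signs $\eps_1=\eps_2$ purely from the primitivity of $w$: a factor $\ov x z x$ with $x$ of the shape $[UUU\cdots\;)(\;\cdots WWW]$ would force an $\ov w$-periodic run to match against a $w$-periodic run inside $s\,y_1\,s$, which is impossible because $\ov w$ is not a factor of $ww$; the paper notes this argument survives even the delicate case $u=\ov v$, where infinitely many cancellations occur inside $s^2$. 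Finally, even after a pinch candidate is located, real work remains that your sketch compresses: one must extract $z=yU'$ with $U'\in H$ a prefix of $[UUU\cdots\;)$, use period arithmetic modulo $t^{e+1}$ plus primitivity of $w$ to show $\abs{U'}\equiv k\abs u$, and only then invoke the cyclic centralizer to write $U'=r^{\ell}$, $u=r^{m}$ and conclude $U'=u^{k}$ by a degree comparison. Your outline points at the right ingredients, but the load-bearing lemma is missing, and as constructed (degree $e+1$, no use of finite generation) it would fail.
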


\begin{proof}Let $\deg(u) = e$. We have $e \geq 0$. 
 Since  $H$ is finitely generated, there is a degree $d$ (with $d > e$) such that
 $deg(x) < d$ for all $x \in H$. By the construction
 according to \refprop{abel} we  define
 the following elements 
 $U= u_{d-e-1}$, $V=v_{d-e-1}$, and $W=w_{d-e-1}  \in \Ext(A,G)$. 
 Recall that $\abs U = \abs V = \abs W = \abs w$ for 
 $d= e+1$ or  $\abs U = \abs V = \abs W = t^{d-1}$ for $d> e+1$. 
 The abelian group of proper periods $\Pi(W)$ 
 is trivial or it 
 is generated by $\abs w$ and  $t^{e+1}, \ldots, t^{d-2}$. 
 The groups $\Pi(U)$ and $\Pi(V)$ may have larger rank than $d-e$. 
 
 Let us define a word $s$ of length $2t^{d}$
  which is depicted as follows:
  $$s  = [UUU\cdots\;)(\;\cdots WW \cdots\;)(\;\cdots VVV].$$
 The group $\Pi(s)$ is 
 generated by $\abs w$ and  $t^{e+1}, \ldots, t^{d-1}$.
  As $u$ is a prefix of $U$, $v$ is a suffix of $V$, and $\abs{u} = \abs{v} =\abs{w}$ is a proper period of $s$, we see that 
  $us = sv$. 
Thus, we obtain a canonical \homo $\phi: H' \to \Ext(A,G)$.
We have to show that $\phi$ is injective. For this it is enough to consider 
a \Breduced word in $H'$ which begins with $s$ or with $\ov s$. 
We can write this word as a sequence $s^{\eps_1}y_1 \cdots s^{\eps_n}y_n$
with $\eps_i = \pm 1$ and $y_i \in H$ for $1 \leq i \leq n$ and we may assume that
$n \geq 1$. 

If the  word is trivial in $\Ext(A,G)$, then it must contain a factor of the 
form $\ov xz x$ where $\deg(z) < \deg(x) = \deg(s)$, $\abs x$ has leading coefficient 1, and $z = 1 \in \Ext(A,G)$. 
Moreover, (by symmetry and by making $x$ shorter if necessary) we may assume that  $x$ or $\ov x$ can be depicted  as $[UUU\cdots\;)(\;\cdots WWW]$.
No such factor $\ov xz x$ appears inside $s$ or $\ov s$. Thus, we have $n \geq 2$ and we 
may assume that $\ov xz x$ is a factor of $s^{\eps_1}y_1 s^{\eps_2}$. 
Assume that $\eps_1 = \eps_2$, say $\eps_1 = \eps_2= 1$, then  $\ov xz x$
appears inside 
$$[UUU\cdots\;)(\;\cdots WW \cdots\;)(\;\cdots VVV]y_1 [UUU\cdots\;)(\;\cdots WW \cdots\;)(\;\cdots VVV].$$

It is also clear that the  factor $z$ must match some factor inside 
the middle part 
$(\;\cdots VVV]y_1 [UUU\cdots\;)$. 
But the word $w$ is primitive, hence $\ov w$ is no factor of $ww$
and  $w$ is a factor of $\ov w \ov w$. Therefore 
this is actually impossible. 

Note that the arguments remain valid even if  e.g.{} $u = \ov{v}$ (which is the 
least evident case). Then $U = \ov{V}$ and infinitely many cancellations inside $s^2$ are possible, but nevertheless 
inside $$[\ov V\ov V \ov V\cdots\;)(\;\cdots WW \cdots\;)(\;\cdots VVV]
[\ov V\ov V \ov V\cdots\;)(\;\cdots WW \cdots\;)(\;\cdots VVV]$$
there is no
factor $\ov{x} z x$ with degree $\deg(s) = \deg (x) > \deg(z)$. 

The conclusion is $\eps_1 =  - \eps_2$
and we may assume $\eps_1 =  - 1$. 
We therefore may assume that $\ov xz x$ is a factor inside the word $\ov sy s $
with $y = y_1 \in H$. Making $z$ longer and $x$ shorter we may assume that $y$ is a factor of the word $z$, and $z$ has the form $\ov{U_1} y U_2$ where $U_1, U_2$ are prefixes of 
$[UUU \cdots\;)$. Without restriction we have $U_1 = U^{n}$ and $\abs{U_1} \leq \abs{ U_2}$. 
Since  $\ov x$ appears as  a suffix of $\ov s$ we may indeed assume that 
$x$ has the form $[UUU \cdots\;)(\;\cdots WWW]$. 
The word  
$x$ begins (inside the word $s$) with $ p uu \cdots$, where $\abs p < \abs u$. 
More precisely, $\abs{U_2}$ is a proper period of $x$, and we can write
$\abs{U_2} = \bet t^{e+1} + m \abs u -\abs p$ for some $\bet \in \Zt$, $m \in \Z$, and suffix $p$ of $u$.
By \reflem{lem:proper} $\abs p$ is a proper period of $x$ and in turn
$\abs p$ is a period of the word $ww$. 
 Since  $w$ is primitive we conclude  $p = 1$, thus $\abs{U_2} = \bet t^{e+1} + m \abs u$. In particular, $U_2$ ends with $(\; \cdots uuu]$ and we see that actually $U_1 = U^{n}$ is a suffix of $U_2$. Replacing $x$ by $U^{n}x$ 
we may assume that the factor  $z$ has the form $y U'$. We conclude that  
$U'$ is a  prefix of  $[UUU \cdots\;)$ and  $U'\in H$
(because $y\in H$ and $z = 1 \in \Ext(A,G)$). 

It is now enough to show that $U' \in \gen{u}$.  
Write $\abs{U'} \equiv \alp \bmod t^{e+1}$  with $\deg(\alp)\leq e$. 
 Note that 
$\abs {U'}= \abs {U_2} - n\abs {U}$ is still a proper period of $x$. Thus, as above we see that  $\alp = k \abs{u}$ for some $k \in \Z$. This implies that $U'$ is in the centralizer of $u$
 and $U'$ is  cyclically \Gred. In particular, $\deg{U'^m} = \rdeg{U'^m}$
 for all $m \in \Z$. By hypothesis the centralizer of $u$ is 
 cyclic, hence for some element 
 $r \in \Ext(A,G)$ and some $\ell, m \in \Z$ we obtain $U' = r^\ell$,  $u= r^m$.
 It follows $U'^m = u^\ell \in \Ext(A,G)$. 
 Hence   $\deg{U'} \leq  e = \deg(u)$, too. 
 We conclude $$\abs{U'}= \alp = k \abs{u}.$$ 
As $U'$ is a prefix of $[UUU \cdots\;)$, we see that 
$U'= u^k$; and  the result is shown. 
\end{proof}
%

%

\begin{thebibliography}{10}

\bibitem{AB87}
R.~{Alperin} and H.~{Bass}.
\newblock Length functions of group actions on {$\Lambda$}-trees.
\newblock {\em Combinatorial group theory and topology}, 111:265--378, 1987.

\bibitem{bass91}
H.~Bass.
\newblock Group actions on non-{A}rchimedean trees.
\newblock In {\em Arboreal group theory ({B}erkeley, {CA}, 1988)}, volume~19 of
  {\em Math. Sci. Res. Inst. Publ.}, pages 69--131. Springer, New York, 1991.

\bibitem{BF}
M.~{Bestvina} and M.~{Feighn}.
\newblock {Stable actions of groups on real trees}.
\newblock {\em Invent. Math.}, 121(2):287--321, 1995.

\bibitem{bo93springer}
R.~Book and F.~Otto.
\newblock {\em Confluent String Rewriting}.
\newblock Springer-Verlag, 1993.

\bibitem{Chis01}
I.~{Chiswell}.
\newblock {\em Introduction to {$\Lambda$}-trees}.
\newblock World Scientific, 2001.

\bibitem{ChMu}
I.~{Chiswell} and T.~{Muller}.
\newblock Embedding theorems for tree-free groups.
\newblock Under consideration.

\bibitem{ddm10}
V.~Diekert, A.~J. Duncan, and A.~G. Myasnikov.
\newblock Geodesic rewriting systems and pregroups.
\newblock In O.~Bogopolski, I.~Bumagin, O.~Kharlampovich, and E.~Ventura,
  editors, {\em Combinatorial and Geometric Group Theory}, Trends in
  Mathematics, pages 55--91. Birkh\"auser, 2010.

\bibitem{BMR95}
A.~M. G.~Baumslag and V.~Remeslennikov.
\newblock Residually hyperbolic groups.
\newblock {\em Proc. Inst. Appl. Math. Russian Acad. Sci.}, 24:3--37, 1995.

\bibitem{GaborioLP94}
D.~{Gaboriau}, G.~{Levitt}, and F.~{Paulin}.
\newblock Pseudogroups of isometries of {$\mathbb R$} and {R}ips' theorem on
  free actions on {$\mathbb R$}-trees.
\newblock {\em Israel. J. Math.}, 87:403--428, 1994.

\bibitem{jan88eatcs}
M.~Jantzen.
\newblock {\em Confluent String Rewriting}, volume~14 of {\em EATCS Monographs
  on Theoretical Computer Science}.
\newblock Springer-Verlag, 1988.

\bibitem{KMI98}
O.~{Kharlampovich} and A.~{Myasnikov}.
\newblock Irreducible affine varieties over a free group. {I}: {I}rreducibility
  of quadratic equations and {N}ullstellensatz.
\newblock {\em J. of Algebra}, 200:472--516, 1998.

\bibitem{KMII98}
O.~{Kharlampovich} and A.~{Myasnikov}.
\newblock Irreducible affine varieties over a free group. {II}: {S}ystems in
  triangular quasi-quadratic form and description of residually free groups.
\newblock {\em J. of Algebra}, 200(2):517--570, 1998.

\bibitem{KMIII05}
O.~{Kharlampovich} and A.~{Myasnikov}.
\newblock Implicit function theorems over free groups.
\newblock {\em J. of Algebra}, 290:1--203, 2005.

\bibitem{KMIV06}
O.~{Kharlampovich} and A.~{Myasnikov}.
\newblock Elementary theory of free non-abelian groups.
\newblock {\em J. of Algebra}, 302:451--552, 2006.

\bibitem{KMRS07}
O.~{Kharlampovich}, A.~{Myasnikov}, V.~{Remeslennikov}, and D.~{Serbin}.
\newblock Groups with free regular length functions in {$\mathbb{Z}^n$}.
\newblock To appear, arXiv:0907.2356v2.

\bibitem{KMRS04}
O.~{Kharlampovich}, A.~{Myasnikov}, V.~{Remeslennikov}, and D.~{Serbin}.
\newblock Subgroups of fully residually free groups: algorithmic problems.
\newblock In A.~G. Myasnikov and V.~Shpilrain, editors, {\em Group theory,
  Statistics and Cryptography}, volume 360, pages 63--101, 2004.

\bibitem{Lynpara60}
R.~Lyndon.
\newblock Groups with parametric exponents.
\newblock {\em Trans. Amer. Math. Soc.}, 9:518–--533, 1960.

\bibitem{LS}
R.~{Lyndon} and P.~{Schupp}.
\newblock {\em Combinatorial Group Theory}.
\newblock Classics in Mathematics. Springer, 2001.

\bibitem{Lys90}
I.~{Lysenok}.
\newblock On some algorithmic problems of hyperbolic groups.
\newblock {\em Math. USSR Izvestiya}, 35:145--163, 1990.

\bibitem{Mihailova58}
K.~A. {Mihailova}.
\newblock {The occurrence problem for direct products of groups}.
\newblock {\em Dokl. Akad. Nauk SSSR}, 119:1103--1105, 1958.
\newblock English translation in: {M}ath. {USSR} {S}bornik, 70: 241--251, 1966.

\bibitem{MoSh84}
J.~{Morgan} and P.~{Shalen}.
\newblock Valuations, trees, and degenerations of hyperbolic structures.
\newblock {\em I. Annals of Math}, 120(3):401--476, 1984.

\bibitem{MRS05}
A.~{Myasnikov}, V.~{Remeslennikov}, and D.~{Serbin}.
\newblock Regular free length functions on {L}yndon's free
  {$\mathbb{Z}[t]$}-group {$F^{\mathbb{Z}[t]}$}.
\newblock {\em Contemp. Math., Amer. Math. Soc.}, 378:37--77, 2005.

\bibitem{nikolaev10thesis}
A.~Nikolaev.
\newblock {\em Membership Problem in Groups Acting Freely on Non-Archimedean
  Trees}.
\newblock Doctor of philosophy, McGill University, Montreal, Quebec, August
  2010.

\bibitem{olsap00}
A.~Y. Olshanskii and M.~V. Sapir.
\newblock Length functions on subgroups in finitely presented groups.
\newblock In {\em Groups --- Korea'98 (Pusan)}. de Gruyter, 2000.

\bibitem{olsap01}
A.~Y. Olshanskii and M.~V. Sapir.
\newblock Length and area functions on groups and quasi-isometric {H}igman
  embeddings.
\newblock {\em IJAC}, 11(2):137--170, 2001.

\bibitem{rips82}
E.~Rips.
\newblock Subgroups of small cancellation groups.
\newblock {\em Bull. London Math. Soc.}, 14:45--47, 1982.

\bibitem{romanovskii}
N.~S. {Romanovskii}.
\newblock {The occurrence problem for extensions of abelian by nilpotent
  groups}.
\newblock {\em Sib. Math. J.}, 21:170--174, 1980.

\bibitem{serre80}
J.-P. {Serre}.
\newblock {\em Trees}.
\newblock New York, Springer, 1980.

\bibitem{Stallings71}
J.~Stallings.
\newblock {\em Group theory and three-dimensional manifolds}.
\newblock Yale University Press, New Haven, Conn., 1971.
\newblock A James K. Whittemore Lecture in Mathematics given at Yale
  University, 1969, Yale Mathematical Monographs, 4.

\end{thebibliography}
\newcommand{\Ju}{Ju}\newcommand{\Ph}{Ph}\newcommand{\Th}{Th}\newcommand{\Ch}{C%
h}\newcommand{\Yu}{Yu}\newcommand{\Zh}{Zh}

{} \\
\small 
\noindent
Volker Diekert, 
Universit\"at Stuttgart,
Universit\"atsstr. 38, 70569 Stuttgart, Germany\\ 
\noindent
Alexei Myasnikov,
Stevens Institute of Technology,
Hoboken, NJ 07030, USA

\end{document}